\documentclass[reqno,10pt]{amsart}
\usepackage{amscd,amssymb}
\usepackage{latexsym}
\usepackage[all]{xy}

\def\into{\hookrightarrow}

\def\rra{\rightrightarrows}

\def\longto{\dashrightarrow}

\def\toisom{\widetilde{\to}}

\def\:{{\colon}}
\def\.{,\dots ,}
\def\wt{\widetilde}
\def\wh{\widehat}
\def\ol{\overline}

\def\Mor{{\rm Mor}}

\def\Ob{{\rm Ob}}

\def\Var{{\rm Var}}

\def\Ext{{\rm Ext}}

\def\Spf{{\rm Spf}}

\def\Ann{{\rm Ann}}

\def\Inv{{\rm Inv}}
\def\Spec{{\rm Spec}}

\def\Bl{{\rm Bl}}

\def\hatBl{{\rm \wh{Bl}}}

\def\unr{{\rm unr}}
\def\aff{{\rm aff}}

\def\dim{{\rm dim}}

\def\reg{{\rm reg}}
\def\rig{{\rm rig}}
\def\ad{{\rm ad}}

\def\an{{\rm an}}

\def\sing{{\rm sing}}

\def\Id{{\rm Id}}

\def\RHom{{\rm RHom}}
\def\Der{{\rm Der}}

\def\small{{\rm small}}

\def\inv{{\rm inv}}

\def\bfA{{\bf A}}

\def\bfF{{\bf F}}

\def\bfH{{\bf H}}

\def\bfL{{\bf L}}

\def\bfN{{\bf N}}

\def\bfQ{{\bf Q}}

\def\bfZ{{\bf Z}}

\def\gtC{{\mathfrak C}}

\def\gtI{{\mathfrak I}}
\def\gtJ{{\mathfrak J}}

\def\gtS{{\mathfrak S}}

\def\gtX{{\mathfrak X}}
\def\gtY{{\mathfrak Y}}
\def\gtZ{{\mathfrak Z}}

\def\gtf{{\mathfrak f}}

\def\calF{{\mathcal F}}

\def\calI{{\mathcal I}}
\def\calJ{{\mathcal J}}

\def\calO{{\mathcal O}}

\def\calX{{\mathcal X}}

\def\oT{{\ol T}}

\def\oX{{\ol X}}

\def\of{{\ol f}}
\def\og{{\ol g}}
\def\oh{{\ol h}}

\def\ox{{\ol x}}

\def\tilT{{\wt T}}

\def\tilW{{\wt W}}
\def\tilX{{\wt X}}

\def\hatA{{\wh A}}
\def\hatB{{\wh B}}
\def\hatC{{\wh C}}

\def\hatV{{\wh V}}

\def\hatX{{\wh X}}
\def\hatY{{\wh Y}}

\def\hatf{{\wh f}}

\def\ocalI{{\ol\calI}}

\def\ogtC{{\ol\gtC}}

\def\hatcalF{{\wh\calF}}

\def\hatcalI{{\wh\calI}}

\def\hatcalO{{\wh\calO}}

\def\alp{{\alpha}}

\def\veps{\varepsilon}
\def\ve{\veps}

\def\R+*{{\bf R^*_+}}

\newtheorem{theor}{Theorem}[subsection]
\newtheorem{prop}[theor]{Proposition}
\newtheorem{lem}[theor]{Lemma}
\newtheorem{cor}[theor]{Corollary}

\theoremstyle{definition}

\newtheorem{defin}[theor]{Definition}
\newtheorem{rem}[theor]{Remark}

\def\small{{\rm small}}

\def\hatVar{{\mathrm {\wh{Var}}}}

\def\QE{{\mathcal {QE}}}

\begin{document}

\title[Functorial desingularization over $\bfQ$: the non-embedded case]{Functorial desingularization
of quasi-excellent schemes in characteristic zero: the non-embedded case}
\author{Michael Temkin}
\thanks{The author wishes to thank E. Bierstone, P. Milman, J. Kollar, L. Illusie, O. Gabber and O. Villamayor for useful discussions, and the anonymous referee for numerous valuable comments.}
\address{\tiny{Einstein Institute of Mathematics, The Hebrew University of Jerusalem, Giv'at Ram, Jerusalem, 91904, Israel}}
\email{\scriptsize{temkin@math.huji.ac.il}}

\begin{abstract}
We prove that any reduced noetherian quasi-excellent scheme of characteristic zero admits a strong desingularization which is functorial with respect to all regular morphisms. As a main application, we deduce that any reduced formal variety of characteristic zero admits a strong functorial desingularization. Also, we show that as an easy formal consequence of our main result one obtains strong functorial desingularization for many other spaces of characteristic zero including quasi-excellent stacks and formal schemes, and complex or non-archimedean analytic spaces. Moreover, these functors easily generalize to non-compact setting by use of generalized convergent blow up sequences with regular centers.
\end{abstract}

\keywords{Resolution, singularities, quasi-excellent, functorial non-embedded desingularization}

\maketitle

\section{Introduction}

\subsection{Motivation}

\subsubsection{Historical overview}
Encouraged by Hironaka's work \cite{Hir} on resolution of singularities, Grothendieck introduced quasi-excellent (or {\em qe}) schemes in \cite[$\rm IV_2$, \S7.8]{ega} in order to provide a natural general framework for desingularization. Grothendieck observed that the schemes studied by Hironaka were schemes of finite type over a
local qe scheme $k$, and proved that if any integral scheme of finite type over a base scheme $k$ admits a desingularization in the weakest possible sense then $k$ is quasi-excellent. Grothendieck conjectured in \cite[$\rm IV_2$, 7.9.6]{ega} that the converse is probably true, and thus any qe scheme admits a desingularization, and claimed without proof that the conjecture holds true for noetherian qe schemes over $\bfQ$ as can be proved by Hironaka's method. The latter claim was never checked, so desingularization of qe schemes of characteristic zero remained a conjecture until the author's work \cite{temdes} in 2008.

It was shown in \cite{temdes} that, indeed, desingularization of qe schemes over $\bfQ$ can be obtained from Hironaka's results relatively easily. Perhaps this is close to the reduction Grothendieck had in mind, see Remark \ref{grem}. However, the main result of \cite{temdes} was that desingularization of qe schemes over $\bfQ$ follows from desingularization of varieties (by a more complicated argument). This is important, since many relatively simple proofs for varieties are now available, and they establish a canonical resolution which is compatible with all smooth morphisms between varieties, as opposed to the non-constructive original Hironaka's method.

\subsubsection{New goals}
The main disadvantage of the results from \cite{temdes} was that the desingularization results for qe schemes were weaker than their analogs for varieties in two aspects: (a) the centers of resolving blow ups were not regular, (b) no functoriality/canonicity was achieved. The aim of this paper is to strengthen the methods of \cite{temdes} in order to construct a desingularization by blowing up only regular centers and so that the whole blow up sequence is functorial in all regular morphisms (in particular, we cannot rely on the method of \cite{Hir} anymore since its resolution is not functorial). Our main results are Theorems \ref{desth} and \ref{hyperth} providing non-embedded desingularization of noetherian qe schemes over $\bfQ$ and non-compact objects including all qe (formal) schemes over $\bfQ$ and analytic spaces in characteristic zero. In comparison with the known non-embedded desingularization of varieties our results give the strongest known desingularization with the exception mentioned in Remark \ref{redrem}: the centers $V_i\into X_i$ of our blow ups are not contained in the maximum locus of the Hilbert-Samuel (or HS) function, moreover, $X_i$ does not have to be normally flat along $V_i$. As a drawback, our method does not treat non-embedded desingularization of generically non-reduced schemes.

Although our method follows the method of \cite{temdes} closely, we tried to make the paper as self-contained as possible. So, the familiarity with \cite{temdes} can be useful but is not necessary. In particular, we give/recall all needed definitions. As opposed to \cite{temdes}, this time we split the exposition into two separate papers concerning non-embedded and embedded desingularization. Note that unlike the case of varieties, we have to deal with both cases because one cannot deduce the non-embedded case from the embedded one -- there are qe schemes that cannot be locally embedded into regular ones. This paper deals only with the non-embedded case (without boundaries), while the embedded case (with boundaries) is dealt with in \cite{emb}.

\subsubsection{Applications}
Currently, it seems that the most applicable case of desingularization of general qe schemes over $\bfQ$ is when the schemes are of finite type over a ring $k[[t_1\. t_n]]$ with $k$ a field. For example, this case (in its non-functorial version from \cite{temdes}) was used for studying log-canonical thresholds in \cite{FEM}, motivic integration in \cite{Nic}, and desingularization of meromorphic connections in \cite{Ked}. The non-functorial method of \cite{temdes} could only desingularize affine qe formal schemes, and in my communication with Mustata, Nicaise, Kedlaya and Soibelman I was urged to strengthen the method so that it may desingularize arbitrary formal schemes of finite type over $k[[t_1\. t_n]]$. This served as the main motivation to continue the research on desingularization of qe schemes and I am very grateful to them all for the encouragement. I would expect that desingularization of formal schemes of finite type over $k[[t_1\. t_n]]$ should be the most applicable particular case of the main results of this paper. Also, desingularization of Berkovich analytic spaces of characteristic zero seems to be new (at least, in the non-good case).

\subsection{Main result}\label{mainsec}

\subsubsection{Strong non-embedded desingularization}
We refer to \S\S\ref{schsec},\ref{blowsec} for the terminology, which includes the notions of quasi-excellent schemes, regular loci and morphisms, and blow up sequences. Here is the strong non-embedded desingularization theorem for generically reduced noetherian quasi-excellent schemes in characteristic zero. In this paper "strong" means that the desingularization blows up only regular subschemes, unlike the terminology of Bierstone-Milman, where one also requires that each center lies in the maximum locus of the HS function.

\begin{theor}\label{desth}
For any noetherian quasi-excellent generically reduced scheme $X=X_0$ over $\Spec(\bfQ)$ there exists a blow up sequence $\calF(X)\:X_n\longto X_0$ such that the following conditions are satisfied:

(i) the centers of the blow ups are disjoint from the preimages of the regular locus $X_\reg$;

(ii) the centers of the blow ups are regular;

(iii) $X_n$ is regular;

(iv) the blow up sequence $\calF(X)$ is functorial with respect to all regular morphisms $X'\to X$, in the sense that $\calF(X')$ is obtained from $\calF(X)\times_XX'$ by omitting all empty blow ups.
\end{theor}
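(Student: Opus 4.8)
The plan is to bootstrap from the known functorial desingularization of varieties over fields of characteristic zero (the canonical resolution of Bierstone--Milman, Villamayor, W\l odarczyk and others, which blows up only regular centers and is compatible with every smooth morphism) to the class of all noetherian qe generically reduced $\bfQ$-schemes, following the method of \cite{temdes}. The two engines driving the extension are N\'eron--Popescu desingularization --- a regular homomorphism of noetherian rings is a filtered colimit of smooth ones --- and the smooth-functoriality of the variety resolution; together they let one transport the resolution from varieties to qe schemes and pin it down uniquely.

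First I would reduce everything to local data and to a resolution \emph{algorithm} governed by a single local invariant. Since localizations and completions $\calO_{X,x}\to\wh\calO_{X,x}$ are regular morphisms, property (iv) forces the whole blow up sequence to be determined by, and compatible with, the local rings; the qe hypothesis is exactly what guarantees that these are $G$-rings with open regular locus and excellent completions, so the classical machinery is available. The task thus becomes to attach to each generically reduced excellent characteristic-zero local ring an invariant $\inv$ that is upper semicontinuous with closed and regular maximal locus, that agrees with the variety invariant, that strictly drops upon blowing up that locus, and whose positivity locus is disjoint from $X_\reg$; blowing up the maximal-$\inv$ locus repeatedly then produces $\calF(X):X_n\longto X_0$.

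Next comes the step I regard as the technical heart. The variety invariant is defined by a recursion --- orders of ideals, maximal contact cut out by derivations, coefficient ideals --- that makes sense verbatim for any generically reduced excellent local $\bfQ$-algebra, so I would adopt this same recursion as the definition of $\inv$ on qe schemes. What must then be proved is that $\inv$ is invariant under every regular morphism. Here N\'eron--Popescu is decisive: a regular morphism $\calO_{X,x}\to\calO_{X',x'}$ is a filtered colimit of smooth $\calO_{X,x}$-algebras, and since $\inv$ at a point depends on only finitely much data it is already computed at one smooth stage; it therefore suffices to establish invariance under smooth morphisms, which one in turn reduces --- by a further application of approximation --- to the finite-type case, where it is precisely the given smooth-functoriality of the variety resolution. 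The geometric assertions (i)--(iii) transfer along the same route, because regular morphisms preserve and reflect regularity of schemes and of centers, commute with blowing up a regular center, and respect the regular locus, so each property for $X$ descends from its counterpart over a variety.

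The main obstacle I anticipate is exactly this bridge from finite-type functoriality to invariance under \emph{all} regular morphisms --- in particular the non-finite-type completion maps $\calO_{X,x}\to\wh\calO_{X,x}$ --- together with termination of the resulting sequence over a base that is not of finite type. One must check that the maximal-contact and coefficient-ideal constructions, and hence $\inv$, genuinely commute with smooth morphisms in this generality --- well enough that Popescu's colimit argument applies --- and that the locally defined maximal-$\inv$ loci descend to a single closed regular subscheme of $X$ rather than merely living over the various completions or approximating varieties. Termination I would obtain from noetherianity of $X$ together with termination of the resolution of each $\Spec(\wh\calO_{X,x})$, and the descent of centers I would extract from the uniqueness built into functoriality: a center compatible with every smooth chart and stable under completion is forced to be the pullback of a well-defined global center. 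Assembling these global centers into the blow up sequence and verifying that it is unchanged, up to omitting empty blow ups, under regular base change then yields (iv) and completes the argument.
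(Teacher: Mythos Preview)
Your proposal has a genuine gap at what you yourself call the technical heart: the claim that the variety invariant ``makes sense verbatim for any generically reduced excellent local $\bfQ$-algebra'' is not correct, and this is precisely why the paper does \emph{not} take your route. The classical invariant is defined only after locally embedding $X$ into a regular ambient scheme and then running the recursion (order, maximal contact via derivations, coefficient ideals) inside that ambient. For a general qe scheme there is no reason such an embedding exists --- the paper notes explicitly in the introduction that ``there are qe schemes that cannot be locally embedded into regular ones'' --- and even when a regular ambient is available it need not carry enough global derivations to produce maximal contact hypersurfaces. Popescu's theorem does not rescue this: writing a regular homomorphism as a filtered colimit of smooth ones lets you transport an \emph{already defined} invariant along regular morphisms, but it does not manufacture the ambient regular scheme with derivations that the recursion needs in the first place. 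Indeed, the paper records your proposed extension as an open problem (Remark~\ref{difrem}(ii)--(iii)), conjecturing that the standard algorithms extend only to schemes that locally embed into regular $\Spec(A)$ with ``enough derivatives''.

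The paper's actual strategy avoids this obstruction entirely. Rather than trying to run the variety algorithm on $X$, it first extends $\calF_\Var$ only to pairs $(X,Z)$ where $Z\supset X_\sing$ is a Cartier divisor that is a disjoint union of varieties: one completes along $Z$, obtains a rig-regular principal formal variety, and uses Elkik's theorem to algebraize it by an honest variety, to which $\calF_\Var$ applies; the delicate point (\S\ref{intrsec}) is proving the result is independent of the algebraization. Then a new functor $\calF$ is built from scratch by induction on codimension (\S\ref{fsec}): one constructs $\calF^{\le d}$ desingularizing $X$ over $X^{\le d}$, and passes from $\calF^{\le d-1}$ to $\calF^{\le d}$ by inserting blow ups over the finitely many bad codimension-$d$ points, where the localized scheme has singular locus contained in a variety and the extended $\calF_\Var$ applies. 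Termination is by noetherian induction on the unresolved locus, not by an invariant drop. So the architecture --- Elkik algebraization plus codimension induction rather than a global invariant --- is fundamentally different from what you outline, and the difference is forced by the embedding obstruction you overlooked.
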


\subsubsection{On the non-reduced case}
\begin{rem}\label{redrem}
(i) Theorem \ref{desth} implies that the same claim holds for all noetherian qe schemes over $\bfQ$. Indeed, given an arbitrary such $X$ with reduction $\tilX$ we can consider the blow up sequence $i_*\calF(\tilX)\:X'\longto X$ which is the pushforward of $\calF(\tilX)\:\tilX'\longto\tilX$ with respect to the closed immersion $i\:\tilX\into X$ (see \S\ref{pushsec2}). Then the reduction of $X'$ is $\tilX'$, so it is regular. Next we kill all generically non-reduced components by blowing them up along their reductions, obtaining a blow up $X''\to X'$ with generically reduced $X''$. Finally, it remains to apply $\calF(X'')\:X'''\longto X''$ to construct a functorial desingularization $X'''\longto X$.

(ii) We ignore on purpose the case when $X$ is not reduced along an irreducible component because in this case the assertion of the theorem is not the "right" version of desingularization: such naive desingularization simply destroys all generically non-reduced components. In addition, existence of naive desingularization of non-reduced schemes does not assert anything new, as we saw in (i).

(iii) The correct way to desingularize non-reduced schemes is to make them normally flat along the reduction. Loosely speaking, such desingularization does not kill non-reduced components, but makes their nilpotent structure "as smooth as possible". The only known way to achieve such desingularization is to use an algorithm such that each its center $V_i$ lies in the maximum HS locus of $X_i$ (or, at least, $X_i$ is normally flat along $V_i$), and to stop the algorithm just before it blows up the non-reduced components. Our algorithm does not possess this property (see also Remark \ref{HSrem}).
\end{rem}

\subsection{Overview}

\subsubsection{The black box strategy}
Now let us discuss our method and the structure of the paper. In order to construct a desingularization $\calF$ of all qe schemes over $\bfQ$ we use as a black box any algorithm $\calF_\Var$ which desingularizes varieties of characteristic zero and is functorial with respect to all regular morphisms (i.e. satisfies the conditions (i), (iii) and (iv) from the theorem). Moreover, if $\calF_\Var$ is strong then we can also achieve that $\calF$ is strong, i.e. satisfies the condition (ii) as well. We consider both strong and non-strong cases mainly because this does not cost us any extra-work; also, there exist many non-strong algorithms, e.g. those of \cite{Kol} and \cite{Wl}.

\subsubsection{Extending $\calF_\Var$}\label{extsec}
It was checked in a very recent work \cite{bmt} that the algorithm of Bierstone-Milman is functorial with respect to all regular morphisms (not necessarily of finite type). Thus, for the sake of concreteness, we can start with the algorithm $\calF_\Var$ from \cite{bmfun}. (Alternatively, it is shown in \cite{bmt} that one can start with any algorithm $\calF_\bfQ$ for varieties over $\bfQ$ and extend it to all varieties using approximation results of \cite[$\rm IV_3$, \S8.8]{ega}. In such case, one can also build on many other available algorithms.) In \S\ref{setup} we fix our terminology and recall many basic facts about blow ups, desingularizations and formal schemes. We warn the reader that the terminology differs in part from that of \cite{temdes}, including blow ups from \S\ref{blsec}, desingularization from \S\ref{dessec} and qe formal schemes from \S\ref{qeforsec}.

We extend in \S\ref{locsec} the functor $\calF_\Var$ to pairs $(X,Z)$, where $X$ is a qe scheme over $\bfQ$ and $Z\into X$ is a Cartier divisor containing the singular locus of $X$ and isomorphic to a disjoint union of varieties. Each $\calF_\Var(X,Z)$ is a desingularization of $X$, and it remains unclear if it really depends on $Z$. Similarly to the method of \cite{temdes}, this construction goes by passing to the formal completion of $X$ along $Z$, thus obtaining a rig-regular formal variety $\gtX=\hatX_Z$, and algebraizing $\gtX$ by a variety $X'$. Then $\calF_\Var(X')$ induces desingularizations on $\gtX$ and $X$, and the main problem we have to solve is that the choice of $X'$ is absolutely non-canonical. Moreover, and this causes the main trouble, even the ground field of $X'$ can be chosen in many ways. This complication is by-passed by proving that all information about the singularities of $\gtX$ can be extracted already from a sufficiently thick nilpotent neighborhood $X_n\into\gtX$ of its closed fiber $\gtX_s$. Moreover, everything is determined by the scheme $X_n$, and is independent of a variety structure of $X_n$, which is not unique. In particular, we prove in \S\ref{intrsec} that $\calF_\Var(\gtX):=\wh{\calF_\Var(X')}$ is canonically defined already by $X_n$, and is therefore independent of the choice of the algebraization $X'$. This section is the heart of the paper, and it is the main novelty since \cite{temdes}. As in loc.cit., Elkik's results from \cite{Elk} are the main tool we are using for algebraization. It will also be convenient (though not critical) to use an improvement of some Elkik's results by Gabber-Ramero.

\begin{rem}\label{HSrem}
It is a natural question if one can strengthen our method so that it will also preserve the property that the centers lie in the maximum HS loci. It seems that the only stage of our method that will not work is algebraization. At this stage one would obtain a formal variety $\gtX$ such that the maximum locus of its HS function is given by an open ideal (informally speaking, the HS function drops on its generic fiber $\gtX_\eta$). Unfortunately, this is clearly insufficient to ensure that $\gtX$ is locally algebraizable, so it is not clear how to use desingularization of varieties to desingularize such $\gtX$.
\end{rem}

\subsubsection{Constructing $\calF$}
In \S\ref{fsec} we use the desingularizations $\calF_\Var(X,Z)$ to construct another desingularization functor $\calF$ which applies to all generically reduced noetherian qe schemes, and this is done by induction on codimension similarly to the argument from \cite[2.3.4]{temdes}. This time we must work more carefully in order not to lose functoriality of the algorithm and regularity of the centers, but the basic idea is the same. Actually, we construct a sequence of functors $\calF^{\le d}$ which desingularize a qe scheme $X$ over the set $X^{\le d}$ of points of codimension at most $d$. The construction is inductive: given the blow up sequence $\calF^{\le d-1}(X)\:X'\to X$ we insert new blow ups which resolve its centers and the source over few "bad" points of codimension $d$. Actually, after localization at the bad points we are dealing with schemes whose singular loci are disjoint unions of varieties, so the functor $\calF_\Var$ suffices to construct $\calF^{\le d}(X)$ by patching in $\calF^{\le d-1}(X)$ over the bad points.

\begin{rem}\label{difrem}
(i) Because of functoriality, we must build $\calF$ from scratch, so it differs from $\calF_\Var$ even for simple varieties. See an expository survey \cite[\S5.2]{survey}, where the action of these algorithms is computed and compared for few simple examples of varieties.

(ii) It remains to be an interesting open question how far one can push the standard desingularization functors $\calF_\Var$ defined in terms of derivative ideals, including the question if one can extend $\calF_\Var$ to all qe schemes of characteristic zero.

(iii) The author conjectures that the existing algorithms can be extended (up to minor modifications) to all schemes over $\bfQ$ which locally admit closed immersions into regular affine schemes $U=\Spec(A)$ with {\em enough derivatives}, and that the extended algorithm is compatible with all regular morphisms. The condition on
derivatives means that the sheaf $\Der_{U/\bfZ}$, which does not have to be quasi-coherent and can even have zero stalks, admits global sections $\partial_1\.\partial_n$ such that for any point $u\in U$ the images of $\partial_i$'s generate the tangent space $(m_u/m_u^2)^*$. Since existence of the above derivatives is equivalent to existence of an (analog of) Taylor homomorphism $A\to A[[T]]$ due to Bierstone-Milman, this conjecture agrees with Bierstone-Milman philosophy. It is an interesting problem to generalize the algorithm from \cite{bmfun} accordingly to the conjecture.

(iv) The above conjecture implies functorial desingularization of formal varieties of characteristic zero. In particular, it would drastically simplify our work in this paper by proving a much more general result on formal desingularization than we prove in \S\ref{formdesth}, and such approach would not use Elkik's results (we
desingularize all qe formal schemes over $\bfQ$ in \S\ref{catsec}, but this is based on the intermediate result of \S\ref{formdesth}).
\end{rem}

\begin{rem}\label{grem}
The author is grateful to O. Villamayor for pointing out that a very similar scheme of induction on codimension was used by Hironaka himself already in his great work of 1964, see \cite[\S4.1]{Hir}, where this method is called "localization". In \cite[Th. 2.3.6]{temdes} induction on codimension was used to deduce desingularization of all noetherian qe schemes over $\bfQ$ from Hironaka's desingularization theorem. Thus, it seems plausible that this proof is close in spirit to the argument Grothendieck talked about in \cite[$\rm IV_2$, 7.9.6]{ega}. Note, however, that Grothendieck also made a stronger assertion that desingularization of arbitrary noetherian qe schemes follows from the case of spectra of noetherian complete local rings. This claim still seems a mystery, and I do not know if this is true and how one can prove this (e.g. by localization technique).
\end{rem}

\subsubsection{Applications to other categories}
Finally, in \S\ref{catsec} we use the functorial desingularization of qe schemes over $\bfQ$ to establish desingularization in other categories. We show that our result implies desingularization of stacks over $\bfQ$, of complex/rigid/Berkovich analytic spaces of characteristic zero, and of qe formal schemes over $\bfQ$. The results about stacks (not covered by varieties) and formal schemes are new. Desingularization of blow ups of affine formal schemes over $\bfQ$ was proved in \cite[4.3.2]{temdes}, but it was impossible to globalize this result without canonicity of the construction, see \cite[4.3.3]{temdes} and the remark after it. The analytic desingularization is well known -- it is done by absolutely the same method as its algebraic analog, but strictly speaking it required a parallel proof until now. We show that it is a formal consequence of desingularization of qe schemes. Since the latter is obtained in  this paper from a desingularization $\calF_\bfQ$ of $\bfQ$-varieties, we see that all desingularization theories can be built in a very algebraic way using a single algorithm $\calF_\bfQ$. At first glance, this might look as a sharp surprise in view of existence of non-algebraizable formal and analytic singularities. Finally, using canonical desingularization we also desingularize non-compact objects in \S\ref{noncomsec}, including locally noetherian qe schemes over $\bfQ$. In particular, this settles completely Grothendieck's conjecture in characteristic zero, as non quasi-compact qe schemes were not treated in \cite{temdes}.

\tableofcontents

\section{Setup}\label{setup}

\subsection{Schemes and morphisms}\label{schsec}

\subsubsection{Varieties}
{\em Variety} or {\em algebraic variety} in this paper always means a scheme $X$ which admits a finite type morphism $X\to\Spec(k)$ to the spectrum of a field. If such a morphism is fixed then we say that $X$ is a $k$-variety and $k$ is the {\em ground field} of $X$. The reader should be aware that an abstract scheme $X$ may admit many different structures of an algebraic variety (especially, when $X$ is not reduced).

\subsubsection{Ideals and closed subschemes}
Given a scheme $X$, we will freely pass between closed subsets and reduced closed subschemes of $X$. Also, we will freely pass between ideals $\calI\subset\calO_X$ and closed subschemes $Z\into X$.

\subsubsection{Pro-open subsets and subschemes}
A {\em pro-open subset} of a scheme $X$ is a subset $S\subset|X|$ which coincides with the intersection of all its neighborhoods. An equivalent condition is that $S$ is closed under generalizations. If $(S,\calO_X|_S)$ is a scheme then we call it the {\em pro-open subscheme} corresponding to $S$. For example, if $x\in X$ then
$X_x=\Spec(\calO_{X,x})$ is a pro-open subscheme of $X$.

\subsubsection{Filtration by codimension}
For a locally noetherian scheme $X$ by $X^{<d}$, $X^{\ge d}$, etc., we denote the subsets of $X$ consisting of all points of codimension strictly less than $d$, larger than $d$, etc. We note that $S=X^{<d}$ is a pro-open subset, but usually it does not underly a pro-open subscheme. We will use in the sequel {\em filtration by codimension} $\emptyset=X^{<0}\subset X^{<1}\subset X^{<2}\subset\dots$, which however may be infinite.

\begin{rem}
Even if $X$ is noetherian its dimension can be infinite, due to some pathological examples by Nagata. For this reason, one should use noetherian induction instead of a more naive induction by dimension. In all cases of practical interest, however, noetherian schemes are finite dimensional.
\end{rem}

\subsubsection{Schematical closure}\label{schemsec}
If $U$ is a pro-open subscheme in a locally noetherian scheme $S$ and $Z_U\into U$ is a closed subscheme then by the {\em schematical closure} of $Z_U$ in $S$ we mean the schematical image $Z$ of the morphism $i\:Z_U\to S$ (i.e. the minimal closed subscheme of $S$ such that $i$ factors through it). The following lemma indicates that this construction works as fine as in the case when $U$ is open in $S$, and $Z$ is actually the minimal extension of $Z_U$ to a closed subscheme in $S$.

\begin{lem}\label{schemlem}
Keep the above notation, then $Z_U$ coincides with the restriction of $Z$ on $U$.
\end{lem}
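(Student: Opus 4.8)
The claim is local on $S$, so we may assume $S=\Spec(A)$ with $A$ noetherian; write $\calI\subseteq\calO_S$ and $\calI_U\subseteq\calO_U$ for the ideals of $Z$ and of $Z_U$, and recall that $\calO_U=\calO_S|_U$, so $\calO_{U,u}=\calO_{S,u}$ for every $u\in U$. The inclusion $Z_U\subseteq Z\cap U$ is formal: $i$ factors as $Z_U\to Z\into S$, and since $|Z_U|\subseteq|U|$ this map lands in $Z\cap U$ and realizes $Z_U$ as a closed subscheme of it. Thus $\calI|_U\subseteq\calI_U$, and it remains to prove the opposite inclusion; as both are coherent ideals on $U$ it suffices to check equality of their stalks at each $u\in U$.

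The geometric reason for the reverse inclusion is the following. Fix $u\in U$ and set $S_u=\Spec(\calO_{S,u})$. Since $U$ is pro-open it is stable under generalization, so $S_u\subseteq U$; moreover every open neighborhood of $u$ in $U$ is itself stable under generalization and hence contains $S_u$. Therefore $Z_U\cap S_u$ is already closed in $S_u$, and the flat base change $S_u\to S$ carries the scheme-theoretic image $Z$ to the scheme-theoretic image of $Z_U\cap S_u\to S_u$, which---being the image of a closed immersion---is $Z_U\cap S_u$ itself. Passing to stalks at $u$ yields $\calI_u=(\calI_U)_u$, as wanted.

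To make the base change legitimate one needs $i$ to be quasi-compact, which after the affine reduction amounts to quasi-compactness of $U$; this holds for the pro-open subschemes $S_x$ occurring in the filtration by codimension but must be dispensed with in general. The clean way to do so---and the step I expect to be the only real point---is to spread $Z_U$ out to a genuine open neighborhood: writing $U$ as the intersection of its open neighborhoods $V_\alpha$ in $S$, with $\calO_U$ the colimit of the $\calO_S|_{V_\alpha}$, the approximation results of \cite[$\rm IV_3$, \S8.8]{ega} descend the coherent ideal $\calI_U$ to a coherent ideal on some $V_\alpha$, cutting out a closed $Z_\alpha\into V_\alpha$ with $Z_\alpha\cap U=Z_U$. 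The open immersion $V_\alpha\into S$ is quasi-compact, so the classical open-immersion case gives $\ol{Z_\alpha}\cap V_\alpha=Z_\alpha$ for the scheme-theoretic closure $\ol{Z_\alpha}$ in $S$. Finally $i$ factors through $Z_\alpha$, hence through $\ol{Z_\alpha}$, so minimality of $Z$ gives $Z\subseteq\ol{Z_\alpha}$; intersecting with $U\subseteq V_\alpha$ we obtain $Z\cap U\subseteq\ol{Z_\alpha}\cap U=Z_\alpha\cap U=Z_U$, which together with the first paragraph proves $Z\cap U=Z_U$.
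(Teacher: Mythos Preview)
Your argument is correct and, in its decisive third paragraph, is essentially the paper's proof: the paper observes that it suffices to exhibit \emph{any} closed subscheme of $S$ restricting to $Z_U$ (minimality of $Z$ then forces $Z\cap U\subseteq Z_U$), and constructs such an extension by first spreading $Z_U$ to an open neighborhood $V$ of $U$ via \cite[$\rm IV_3$, 8.6.3]{ega} and then extending from $V$ to $S$ via \cite[$\rm I_{new}$, 6.9.7]{egaI}---exactly your $Z_\alpha\subset V_\alpha$ followed by $\ol{Z_\alpha}\subset S$. Your second paragraph (the flat base-change argument at $S_u$) is a detour you yourself abandon, and could be dropped; also, the precise EGA reference for the spreading-out step is 8.6.3 rather than \S8.8.
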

\begin{proof}
It suffices to prove that $Z_U$ admits any extension to a closed subscheme $Z\into S$ since the minimal extension then exists by local noetherianity of $S$. An arbitrary extension was constructed in \cite[2.1.1]{temdes} as follows: first one uses \cite[$\rm IV_4$, 8.6.3]{ega} to extend $Z_U$ to a closed subscheme $Z_V$ of a sufficiently small open neighborhood $V$ of $U$, and then one extends $Z_V$ to $S$ using \cite[6.9.7]{egaI}.
\end{proof}

\subsubsection{Schematical density}
Assume that $S$ is a scheme with a pro-open subset $U$. We say that $U$ is {\em schematically dense} in $S$ if for any proper closed subscheme $S'\into S$ there exists $u\in U$ such that $S'\times_SS_u$ is a proper subscheme of $S_u=\Spec(\calO_{S,u})$ (possibly empty). In particular, if $U$ is a pro-open subscheme then this condition agrees with the usual definition that the schematic image $V$ of the embedding $U\into S$ coincides with $S$. Indeed, $V$ is the minimal closed subscheme of $S$ whose restriction onto each $S_u$ with $u\in U$ coincides with $S_u$. So, $V=S$ if and only if any proper closed subscheme $S'\into S$ restricts to a proper closed subscheme of some $S_u$ with $u\in U$.

\subsubsection{$U$-admissibility}\label{udensesec}
An $S$-scheme $X$ is called {\em $U$-admissible} if the preimage of $U$ in $X$ is schematically dense. This follows the terminology of \cite{RG} and we will not use the notion "admissible" in other meanings.

\subsubsection{Regular morphisms}
Following \cite[$\rm IV_2$, 6.8.1]{ega} we call a morphism of schemes $f\:Y\to X$ {\em regular} if it is flat and has geometrically regular fibers (in particular, the fibers are locally noetherian). For morphisms of finite type regularity is equivalent to smoothness, so it can be viewed as a generalization of smoothness to morphisms not necessarily of finite type. A homomorphism of rings $f\:A\to B$ is {\em regular} if $\Spec(f)$ is regular.

\begin{rem}
(i) We remark that Hironaka uses the notion of universally regular morphisms instead of regular morphisms reserving the notion of regularity to what we call reg morphisms below. However, our definition of regularity is the standard one.

(ii) Let us temporarily say that a morphism $f\:Y\to X$ is {\em reg} if it is flat and has regular fibers. Then a morphism is regular in our sense if and only if it is universally reg.

(iii) It seems that reg morphisms are not worth a study (and a special name). For example, one can easily construct an example of a flat family of integral curves over $\bfA^1_{\bfF_p}$ such that the general fiber is regular but all closed fibers are singular.
\end{rem}

\subsubsection{Singular locus}
We define the {\em regular locus} $X_\reg$ of a scheme $X$ as a set of points at which $X$ is regular, and the {\em singular locus} $X_\sing$ is defined as the complement of $X_\reg$.

\subsubsection{Compatibility with regular morphisms}\label{compsec}
It is well known that regular/singular locus is compatible with regular morphisms, i.e. for a regular morphisms $f\:Y\to X$ we have that $Y_\sing=f^{-1}(X_\sing)$ and $Y_\reg=f^{-1}(X_\reg)$. See, for example, \cite[Th. 51]{Mat}.

\subsubsection{Singular locus of a morphism}
By the {\em singular locus} $g_\sing$ of a morphism $g\:Y\to X$ we mean the set of points $y\in Y$ at which $g$ is not regular (i.e. $g_y\:\Spec(\calO_{Y,y})\to\Spec(\calO_{X,g(y)})$ is not regular).

\begin{lem}\label{complem}
(i) If $f\:Z\to Y$ and $g\:Y\to X$ are morphisms of schemes and $f$ is regular, then $(g\circ f)_\sing=f^{-1}(g_\sing)$.

(ii) If $g\:A\to B$ and $f\:B\to C$ are local homomorphisms between local rings and $f$ is regular, then $g$ is regular if and only if $f\circ g$ is regular.
\end{lem}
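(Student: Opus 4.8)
The plan is to deduce (i) from (ii) by passing to stalks, and to prove (ii) directly from the standard calculus of regular homomorphisms, the only delicate point being the descent of geometric regularity of the fibers.

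First I would reduce (i) to (ii). Regularity of a morphism is equivalent to regularity of all its stalk homomorphisms: $f$ is regular if and only if for every $z\in Z$ the local homomorphism $\calO_{Y,f(z)}\to\calO_{Z,z}$ is regular, since flatness and geometric regularity of the fibers are tested pointwise (see \cite[23.7]{Mat}). Thus, fixing $z\in Z$ and setting $y=f(z)$, $x=g(y)$, $A=\calO_{X,x}$, $B=\calO_{Y,y}$, $C=\calO_{Z,z}$, the point $z$ lies in $(g\circ f)_\sing$ exactly when the composite homomorphism $A\to C$ is not regular, while $f(z)=y$ lies in $g_\sing$ exactly when $A\to B$ is not regular. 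Since $f$ is regular the homomorphism $B\to C$ is regular, so (ii) gives that $A\to B$ is regular precisely when $A\to C$ is; equivalently $z\in(g\circ f)_\sing$ if and only if $z\in f^{-1}(g_\sing)$, which is (i).

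For (ii) the forward implication is immediate, as a composite of regular homomorphisms is regular. For the converse, assume the composite $A\to C$ is regular; I must show that $g\colon A\to B$ is flat with geometrically regular fibers. Since $f\colon B\to C$ is a flat local homomorphism of local rings it is faithfully flat, so $\Spec(C)\to\Spec(B)$ is surjective. Flatness of $g$ then follows by faithfully flat descent: for any injection $M\into M'$ of $A$-modules the arrow $B\otimes_AM\to B\otimes_AM'$ becomes, after the faithfully flat base change $-\otimes_BC$, the injection $C\otimes_AM\to C\otimes_AM'$ provided by flatness of $A\to C$, hence it is itself injective.

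It remains to prove that the fibers of $g$ are geometrically regular, and this is the \emph{heart of the matter}. Fix a prime $\gtp\subset A$ and a prime $\gtq\subset B$ over it; using surjectivity of $\Spec(C)\to\Spec(B)$, choose $\gtr\subset C$ over $\gtq$. Localizing yields local homomorphisms $A_\gtp\to B_\gtq\to C_\gtr$ in which $B_\gtq\to C_\gtr$ is regular and faithfully flat and $A_\gtp\to C_\gtr$ is regular. To test geometric regularity of the fiber $\oB:=B_\gtq\otimes_{A_\gtp}\kappa(\gtp)$ over $k:=\kappa(\gtp)$, I base change by an arbitrary finite extension $k'/k$; writing $\oC:=C_\gtr\otimes_{A_\gtp}\kappa(\gtp)$, the induced map $\oB\otimes_kk'\to\oC\otimes_kk'$ is again faithfully flat with geometrically regular fibers, and $\oC\otimes_kk'$ is regular because the fiber of $A\to C$ over $\gtp$ is geometrically regular over $k$. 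For any prime $\gts$ of $\oB\otimes_kk'$ pick, by surjectivity, a prime $\gtt$ of $\oC\otimes_kk'$ over it; then $(\oB\otimes_kk')_\gts\to(\oC\otimes_kk')_\gtt$ is flat local with regular target, so its source is regular by \cite[23.7]{Mat}. Hence $\oB\otimes_kk'$ is regular for every finite $k'/k$, i.e. $\oB$ is geometrically regular over $k$, which completes (ii). The main obstacle is precisely this last step: flatness descends formally, but to transfer regularity from the fibers of $A\to C$ down to those of $A\to B$ one must keep the relevant homomorphisms local, which is arranged by repeatedly lifting primes along the faithfully flat $f$ and its base changes and then invoking descent of regularity along flat local homomorphisms.
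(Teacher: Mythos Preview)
Your proof is correct and follows essentially the same strategy as the paper: reduce (i) to the local statement (ii), handle flatness by faithful flatness of $f$, and handle the fibers by base-changing $f$ and using that regularity descends along regular (in particular flat) morphisms. The only difference is one of thoroughness: the paper treats explicitly only the closed fiber over $m_x$ and only ordinary regularity (invoking \S\ref{compsec}), whereas you carry out the argument at every prime $\gtp\subset A$ and over every finite extension $k'/\kappa(\gtp)$, making the verification of \emph{geometric} regularity of all fibers explicit rather than leaving it to the reader.
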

\begin{proof}
Obviously, (ii) is a particular case of (i). On the other hand, the claim that $z\in(g\circ f)_\sing$ if and only if $z\in f^{-1}(g_\sing)$ reduces to checking (ii) for the local homomorphisms $g_y\:\calO_{X,x}\to\calO_{Y,y}$ and $f_z\:\calO_{Y,y}\to\calO_{Z,z}$, where $y=f(z)$ and $x=g(y)$. Since $f_z$ is faithfully flat, $g_y$ is flat if and only if $f_z\circ g_y$ is flat. The homomorphism $f_z\otimes_{\calO_{X,x}}k(x)\:\calO_{Y,y}/m_x\calO_{Y,y}\to\calO_{Z,z}/m_x\calO_{Z,z}$ is the base change of
$f_z$, hence it is regular, and by \S\ref{compsec} we obtain that $\calO_{Y,y}/m_x\calO_{Y,y}$ is regular if and only if $\calO_{Z,z}/m_x\calO_{Z,z}$ is regular. This concludes the proof.
\end{proof}

\subsubsection{Quasi-excellent schemes}
For shortness, we will abbreviate the word quasi-excellent as {\em qe}. Since qe schemes are defined by two conditions which are of an interest of their own, we introduce the corresponding classes of schemes. We say that $X$ is an {\em N-scheme} if it is locally noetherian and for any $Y$ of finite type over $X$ the regular locus $Y_\reg$ is open. We say that $X$ is a {\em G-scheme} if for any point $x\in X$ the completion homomorphism $\calO_{X,x}\to\hatcalO_{X,x}$ is regular. It was proved by Grothendieck that it suffices to check this only at the closed points. A scheme is {\em qe} if it is both G and N scheme. If in addition $X$ is universally catenary then it is called {\em excellent}. We say that a ring is G, N, qe or excellent if its spectrum is so. We list few basic well known properties of qe schemes.

\begin{lem}
(i) If $X$ is N, G, qe or excellent then any $X$-scheme of finite type is so.

(ii) $X$ is a $G$-scheme if and only if the local rings of all closed points $x\in X$ are G-rings.

(iii) If $X=\Spec(A)$ is an affine G-scheme then for any ideal $I\subset A$ the completion homomorphism $A\to\hatA_I$ is regular.
\end{lem}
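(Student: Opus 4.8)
The plan is to reduce every assertion to a statement about Noetherian local rings and their completions, and then to combine three standard inputs from the theory of G-rings: (a) a localization of a G-ring is a G-ring; (b) a Noetherian local ring $A$ is a G-ring if and only if the completion map $A\to\wh A$ is regular; and (c) the G-property and universal catenarity are preserved by finitely generated ring extensions (\cite[$\rm IV_2$, \S7]{ega}). Throughout I would use the compatibility of regularity with composition and faithful flatness recorded in Lemma \ref{complem}, together with the fact that the ideal-adic completion of a Noetherian ring is faithfully flat.

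For (i) I would treat the four classes one at a time. The N-property is almost tautological: if $Y\to X$ is of finite type and $Y'\to Y$ is of finite type then $Y'\to X$ is of finite type, so $Y'_\reg$ is open by hypothesis; since finite type over a locally noetherian base is again locally noetherian, $Y$ is an N-scheme. The G-property is exactly the preservation statement (c), and universal catenarity is stable under finite type by its very definition. As qe means G together with N, and excellent means qe together with universally catenary, the remaining two cases follow formally by intersecting the previous ones.

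For (ii) the forward direction is immediate: by (b), "$X$ is a G-scheme" says precisely that $\calO_{X,x}$ is a G-ring for every $x\in X$, in particular at closed points. For the converse I would introduce the good locus $G\subseteq X$ of those points $x$ at which $\calO_{X,x}$ is a G-ring, which is stable under generization because any $\calO_{X,x'}$ with $x'$ a generization of $x$ is a localization of $\calO_{X,x}$, so that (a) applies. The hypothesis says $G$ contains every closed point, and the claim is $G=X$. This last reduction is the main obstacle, and it is genuinely non-formal: a point closed in an affine open need not be closed in $X$, and on a non quasi-compact $X$ the closure of a point may contain no point closed in $X$, so one cannot simply pass to maximal ideals of an affine chart. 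The mechanism that makes it work is the constructibility of $G$ on each noetherian affine combined with its generization-stability; this is precisely Grothendieck's theorem that the G-property is detected at closed points, and I would cite it from \cite[$\rm IV_2$, \S7.9]{ega} rather than reprove the constructibility of the good locus by hand.

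For (iii) I would localize: fix $x\in Z$, put $A=\calO_{X,x}$, let $\calI\subset A$ be the ideal of $Z$ and $\gtm$ the maximal ideal, so that the stalk of $\hatX_Z$ at $x$ is the $\calI$-adic completion $B:=\wh A^\calI$ and the assertion is that $A\to B$ is regular. Flatness is clear, so only geometric regularity of the fibres remains. Here I would bring in the $\gtm$-adic completion $C:=\wh A^\gtm$, observing that $C$ is also the $\gtm$-adic completion of $B$ (since $\calI\subseteq\gtm$ gives $B/\gtm^nB=A/\gtm^n$), hence faithfully flat over $B$, while $A\to C$ is regular because $X$ is a G-scheme. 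For any prime $\gtp$ of $A$ and any finitely generated field extension $K$ of $\kappa(\gtp)$, base change turns $B\to C$ into a faithfully flat map $B\otimes_A K\to C\otimes_A K$ whose target $C\otimes_A K$ is regular, as the fibre $C\otimes_A\kappa(\gtp)$ is geometrically regular; faithfully flat descent of regularity then forces $B\otimes_A K$ to be regular, so the fibres of $A\to B$ are geometrically regular, as needed. Equivalently, once one knows that the $\calI$-adic completion of a G-ring is again a G-ring, $B\to C$ is regular and Lemma \ref{complem}(ii) promotes regularity of $A\to C$ to regularity of $A\to B$.
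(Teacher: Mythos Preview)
The paper does not prove this lemma; it is stated as a list of ``basic well known properties'' and left without argument. Your outline is essentially correct and follows the standard route through \cite[$\rm IV_2$, \S7]{ega}, so there is nothing to compare against.

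Two small points are worth tightening. In (ii), the phrase ``constructibility of $G$'' is not the actual mechanism in Grothendieck's proof; the reduction from arbitrary primes to maximal ones goes through the structure of formal fibres rather than a constructibility argument. Since you are citing \cite[$\rm IV_2$, \S7]{ega} anyway this is harmless, but the parenthetical explanation is slightly misleading. In (iii), the sentence ``the stalk of $\hatX_Z$ at $x$ is the $\calI$-adic completion $B:=\wh A^\calI$'' is not literally true: stalks of the structure sheaf of a formal scheme are more complicated than that. What you actually prove, and what suffices, is that for the local ring $A=\calO_{X,x}$ the map $A\to\wh A^\calI$ is regular. To pass from this to the global assertion (that for an affine open $\Spec(A')\subset X$ the map $A'\to\wh{A'}^{I'}$ is regular), check regularity at each maximal ideal $\gtn$ of $\wh{A'}^{I'}$; such $\gtn$ contracts to a maximal ideal $\gtm'\supset I'$ of $A'$, and the $\gtn$-adic completion of $\wh{A'}^{I'}$ coincides with $\wh{A'_{\gtm'}}$, after which your faithfully-flat-descent argument applies verbatim. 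With these two cosmetic fixes the proof is complete.
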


\subsubsection{Quasi-excellence and completions}
The G-property can be lost when passing to formal completions. Fortunately, the situation with quasi-excellence is better, as the following result shows.

\begin{theor}[Nishimura-Nishimura]\label{NNth}
Let $A$ be a noetherian ring containing $\bfQ$ and let $I\subset A$ be an ideal such that $A$ is complete in the $I$-adic topology. Then $A$ is qe if and only if $A/I$ is qe. In particular, formal completion of $\bfQ$-algebras preserves quasi-excellence.
\end{theor}

\begin{rem}\label{NNrem}
This is Theorem B in \cite{NN}. The main ingredient in its proof is existence of weak resolution of singularities for local $A$-schemes of essentially finite type, and this tool imposes the restriction on the characteristic. In the original proof, Nishimura-Nishimura used Hironaka's theorem (which covers local qe schemes). Nowadays, one can use any desingularization of varieties and the results of \cite{temdes} as an alternative. (Modern desingularizations of varieties are much simpler than Hironaka's proof, and \cite{temdes} shows how any of them can be used as a black box to resolve arbitrary qe scheme over $\bfQ$.)
\end{rem}

Since qe schemes were introduced by Grothendieck, it took more than 20 years to prove \ref{NNth}. The question whether the characteristic zero assumption can be removed was open for another 20 years, until it was answered affirmatively by Ofer Gabber (unpublished).

\begin{theor}[Gabber]\label{gabth}
Let $A$ be a noetherian ring with an ideal $I$ such that $A$ is complete in the $I$-adic topology. Then $A$ is qe if and only if $A/I$ is qe. In particular, formal completion preserves quasi-excellence.
\end{theor}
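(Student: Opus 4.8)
\emph{Proof strategy.} The forward implication is immediate from the Lemma above: $A/I$ is finite, hence of finite type, over $A$, so if $A$ is qe then so is $A/I$. All the content lies in the converse, so assume $A$ is noetherian and $I$-adically complete and that $B:=A/I$ is qe; we must show that $A$ is both a G-ring and an N-ring. Since $A$ is $I$-adically complete, every element of $1+I$ is a unit, so $I$ lies in the Jacobson radical and each maximal ideal $\gtm\subseteq A$ contains $I$; thus the maximal ideals of $A$ are exactly the preimages of those of $B$. By the Lemma above the G-property may be tested at closed points, so it suffices to prove that each $A_\gtm$ is a G-ring, and for a noetherian local ring it is standard that this is equivalent to regularity of the completion map $\phi\colon A_\gtm\to\wh A^\gtm$, where $\wh A^\gtm=\lim_n A/\gtm^n$ is the $\gtm$-adic completion (of $A$, equivalently of $A_\gtm$, so $\phi$ is faithfully flat). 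Because $I\subseteq\gtm$, reduction modulo $I$ is compatible with completion: $\wh A^\gtm\otimes_A B=\wh B^{\ol\gtm}$, the completion of $B$ at $\ol\gtm=\gtm/I$, and $\phi\otimes_A B$ is the map $B_{\ol\gtm}\to\wh B^{\ol\gtm}$, which is regular because $B$ is a G-ring.

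Thus the problem becomes a purely G-theoretic one: $\phi$ is a faithfully flat local homomorphism that is regular after applying $-\otimes_A B$, and I must upgrade this to regularity of $\phi$ itself, i.e. to geometric regularity of every fibre. The plan is to invoke N\'eron--Popescu desingularization, which writes the regular map $B_{\ol\gtm}\to\wh B^{\ol\gtm}$ as a filtered colimit of smooth $B$-algebras; smooth algebras lift compatibly through the tower $\{A/I^n\}$, so by the $I$-adic completeness of $A$ I would promote this ind-smooth presentation to one exhibiting $\wh A^\gtm$ as a filtered colimit of smooth $A_\gtm$-algebras, whence $\phi$ is ind-smooth and therefore regular by the converse half of N\'eron--Popescu. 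The hard part sits exactly here: the hypothesis on $B$ controls $\phi$ only over $V(I)$, whereas regularity of $\phi$ also demands geometric regularity of the fibres over the primes $\gtp\not\supseteq I$ -- the fibres that are generic in the $I$-direction, about which $A/I$ says nothing directly. Forcing the lifted system to reassemble to the genuine $\gtm$-adic completion, and thereby to impose regularity on these generic fibres, is the delicate point that carries the real weight of Gabber's argument.

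Finally, I would deduce the N-property for $A$ from that of $B$ together with the G-property just obtained. For a finite type $A$-algebra $C$ the singular locus $C_\sing$ is stable under specialization (localizations of regular local rings are regular), so it is enough to prove it constructible. Reducing modulo $I$ gives a finite type $B$-algebra $C/IC$ whose singular locus is closed since $B$ is an N-ring, while the $I$-adic completion map $C\to\wh C$ is regular by the Lemma above because $C$ is now a G-ring; combining these through the compatibility of singular loci with regular morphisms (\S\ref{compsec}) with a Noetherian-induction argument yields constructibility, hence closedness, of $C_\sing$. This last step is routine next to the G-property, so the whole theorem rests on the lifting step of the preceding paragraph.
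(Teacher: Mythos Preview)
The paper does not prove this theorem: it is quoted as Gabber's result and used as a black box, with the surrounding Remark recording the history (Nishimura--Nishimura's conditional proof assuming weak resolution of local qe schemes, and Gabber's unconditional strengthening via regular alteration covers). So there is no ``paper's own proof'' to compare against, only the known approaches.

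Your proposal has a genuine gap, and it is exactly the one you flag yourself. The reductions are fine: the G-property can be tested at maximal ideals, all of which contain $I$, and $\phi\otimes_A B$ is indeed the regular map $B_{\ol\gtm}\to\wh B^{\ol\gtm}$. But the N\'eron--Popescu lifting step does not work as written. Even granting that each smooth $B_{\ol\gtm}$-algebra in an ind-smooth presentation of $\wh B^{\ol\gtm}$ lifts to a smooth $A_\gtm$-algebra equipped with a map to $\wh A^\gtm$, nothing forces the filtered colimit of the lifts to be $\wh A^\gtm$ rather than some proper subalgebra that merely surjects onto $\wh B^{\ol\gtm}$. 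The data modulo $I$ does not determine $\wh A^\gtm$ as an $A_\gtm$-algebra, so an ind-smooth presentation downstairs cannot be promoted to one upstairs by formal nonsense. Your phrase ``the delicate point that carries the real weight of Gabber's argument'' is a misattribution: Gabber's proof does not proceed via N\'eron--Popescu at all, so what you have located is not a subtle step in his argument but a hole in yours.

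The actual proofs take a different, geometric route. Nishimura--Nishimura show that the implication holds whenever every local qe integral scheme admits a proper birational regular model; one then controls the formal fibre of $\phi$ over a prime $\gtp\not\supseteq I$ by passing to such a model of $\Spec(A/\gtp)$, where the completion map is automatically regular because the source is regular, and descends. Gabber removes the desingularization hypothesis by substituting his theorem that every qe scheme admits a regular cover in the topology generated by alterations and flat quasi-finite maps, which he proves unconditionally. In either case the essential input is a regular scheme dominating $\Spec(A/\gtp)$, not a lifting of ind-smooth presentations. Your framework for the forward implication and for the N-property is reasonable, but the core G-property step must be replaced by one of these geometric arguments; the Popescu-lifting idea, as stated, does not close.
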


\begin{rem}\label{gabrem}
(i) Gabber's theorem is not fully written down yet, but the main lines of the proof are outlined in Gabber's letter to Laszlo. Gabber accurately checks the proof of \cite{NN} and shows which modifications are needed in order to use only the following weaker desingularization ingredient. Weak local uniformization of qe schemes: any qe scheme can be covered by a regular one in the topology generated by alterations and flat quasi-finite coverings. The latter theorem is a subtle and important result of Gabber that will be published soon in a volume by Illusie, Laszlo and Orgogozo. Actually, it is the only desingularization result established for all qe schemes.

(ii) Since a full proof of Theorem \ref{gabth} is not available yet, we will make only a single conditional use of it in the paper, see \S\ref{formalsec}. All results in characteristic zero, including all our main theorems, are independent of \ref{gabth} and only make use of \ref{NNth}. Due to Remark \ref{NNrem}, this does not assume dependency on Hironaka's paper.
\end{rem}

\subsubsection{Categories}
In this paper, we denote by $\QE$ the category of generically reduced noetherian qe schemes and by $\QE_\reg$ its subcategory containing only regular morphisms. Of main interest for our needs are the full subcategories $\Var_{p=0,\reg}$ and $\QE_{p=0,\reg}$ of $\QE_\reg$. The objects in $\Var_{p=0,\reg}$ are finite disjoint unions of
generically reduced varieties of characteristic zero and the objects of $\QE_{p=0,\reg}$ are generically reduced noetherian qe schemes over $\bfQ$. The reason to consider disjoint unions of varieties defined over different fields will become clear in \S\ref{affsec}.

\subsection{Blow up sequences}\label{blowsec}

\subsubsection{Blow ups}\label{blsec}
Basic facts about blow ups can be found in \cite[\S2.1]{temdes} or in the literature cited there. Recall that the blow up $f\:\Bl_V(X)\to X$ along a closed subscheme $V$ is the universal morphism such that $f^*(V):=V\times_X\Bl_V(X)$ is a Cartier divisor. We will use the terminology of \cite{temdes} with one important exception: when we say that $Y\to X$ is a blow up (or $Y$ is a blow up of $X$), we always mean that the center $V$ of the blow up is fixed. So, strictly speaking, we always mean (usually implicitly) that an isomorphism $Y\toisom\Bl_V(X)$ is fixed. The reason for this change is that we will study functoriality, and the choice of $V$ increases canonicity of constructions, see, for example, \S\ref{pushsec}. Following the convention of \cite{Kol} we call a blow up $\Bl_{\emptyset}(X)\toisom X$ {\em empty} or {\em trivial} blow up. This is the only blow up we will sometimes ignore. However, even empty blow ups play important synchronizing role when patching local desingularizations, see \S\S\ref{funsubsec}--\ref{invsec}. Note that though any blow up along a Cartier divisor induces an isomorphism on the level of schemes, it may play a non-trivial role for functorial desingularization. In our case this is mainly the synchronization (similarly to empty blow up), but in the embedded case such blow ups induce non-trivial operations of strict and controlled transforms, so they cannot be ignored by no means.

\subsubsection{Blow up sequences}\label{blseqsec}
A price one has to pay for using a finer notion of blow ups is that the composition $X''\to X'\to X$ of blow ups cannot be considered as a blow up in a natural way: though $X''\to X$ is isomorphic to a blow up $\Bl_W(X)\to X$, it is not clear how to choose $W$ canonically. Therefore we define a {\em blow up sequence of length $n$} to be a composition of $n\ge 0$ blow ups $X_n\stackrel{f_n}\to\dots\stackrel{f_2}\to X_1\stackrel{f_1}\to X_0$ with centers $V_i\into X_i$. In particular, a blow up sequence of length $0$ is just the object $X_0$ itself. Such sequence is called {\em empty}. It will be convenient to denote a blow up sequence as $f\:X_n\longto X_0$ and say that $V_i$'s are its centers. We stress that a blow up sequence is not defined by the morphism $X_n\to X$ even when we blow up regular schemes along regular centers, see \cite[3.33]{Kol}. A sequence will be called {\em contracted} if all its blow ups are non-empty. Using dashed arrows we will sometimes split blow up sequences, e.g. $f\:X_n\longto X_{i+1}\to X_i\longto X_0$.

\subsubsection{Equality/isomorphism of blow up sequences}
An {\em isomorphism} between two blow up sequences $X'_n\longto X_0$ and $X_n\longto X_0$ is a set of isomorphisms $X'_i\toisom X_i$ which identify $X_0$'s and the centers. Obviously, if two blow up sequences of $X$ are isomorphic then the isomorphism is unique. For this reason, we will often say by slight abuse of language that two such blow up sequences are {\em equal}, and this cannot cause any confusion. For example, this agreement allows us to say "unique" in Lemma \ref{pushlem} below instead of "unique up to unique isomorphism".

\subsubsection{Trivial extension}\label{trivsec}
We say that a blow up sequence $\of\:\oX'\to X$ is a {\em trivial extension} of a blow up sequence $f\:X'\to X$ if $f$ is obtained from $\of$ by removing few empty blow ups (here we invoke our agreement about equality of blow up sequences).

\subsubsection{Flat base changes}\label{flatsec}
Any blow up sequence $f\:X_n\longto X_0$ is {\em compatible} with any flat base change $g\:Y_0\to X_0$ in the sense that $Y_1:=Y_0\times_{X_0} X_1$ is a flat $X_1$-scheme isomorphic to the blow up of $Y_0$ along $Y_0\times_{X_0}V_0$, and similarly for all further blow ups in the sequence. Naturally, the induced blow up sequence $Y_n\longto Y_0$ will be called the {\em pullback} of $f$ with respect to $g$ and will be denoted as $g^*(f)$.

\subsubsection{Pushing forward with respect to pro-open immersions}\label{pushsec}
One advantage of having the center $V$ of a blow up $f_U\:U'\to U$ fixed is that for any pro-open immersion $U\into X$ with a locally noetherian $X$ we can canonically extend $f_U$ to a blow up $f\:X'\to X$. We simply take the schematical closure of $V$ in $X$ to be the center of $f$. Note that $f_U=f\times_XU$ by Lemma \ref{schemlem} and \S\ref{flatsec}, in particular, $f$ extends $f_U$. Iterating the same construction we obtain the following easy result.

\begin{lem}\label{pushlem}
Assume that $f_U\:U'\longto U$ is a blow up sequence and $i\:U\into X$ is a pro-open immersion with a locally noetherian $X$. Then there exists a unique blow up sequence $f\:X'\longto X$ such that $f_U=f\times_X U$ and the centers of $f_U$ are schematically dense in the centers of $f$.
\end{lem}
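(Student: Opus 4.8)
The plan is to prove both existence and uniqueness by induction on the length $n$ of the blow up sequence, feeding the single blow up pushforward recalled in \S\ref{pushsec} into itself. For $n=0$ there is nothing to do: we take $f$ to be the empty sequence $X\toisom X$. For the inductive step I would peel off the first blow up $U_1=\Bl_{V_0}(U)$ of $f_U$, where $V_0\into U$ is its first center, handle it by the one-step construction, and then apply the inductive hypothesis to the remaining sequence.

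\textbf{Existence.} Following \S\ref{pushsec}, I set $W_0\into X$ to be the schematical closure of $V_0$ in $X$ (which exists since $X$ is locally noetherian) and put $X_1:=\Bl_{W_0}(X)$. By Lemma \ref{schemlem} the restriction $W_0|_U$ equals $V_0$. Since a pro-open immersion is flat (its stalks are the identities $\calO_{X,u}=\calO_{U,u}$) and blow ups commute with flat base change, base changing $X_1\to X$ along $i$ gives
\[
X_1\times_X U=\Bl_{W_0|_U}(U)=\Bl_{V_0}(U)=U_1,
\]
and the induced $U_1\into X_1$ is again a pro-open immersion; moreover $V_0$ is schematically dense in $W_0$ by the definition of the schematical closure. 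Now $U_n\longto U_1$ is a blow up sequence of length $n-1$ over the pro-open subscheme $U_1$ of the locally noetherian scheme $X_1$, so the inductive hypothesis yields a unique $X_n\longto X_1$ with $(X_n\longto X_1)\times_{X_1}U_1=(U_n\longto U_1)$ and schematically dense centers. Composing with $X_1\to X$ produces $f:X_n\longto X$; base changing the two stages in turn gives $f\times_X U=f_U$, and the schematical density of every center has been verified stagewise.

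\textbf{Uniqueness.} I would argue by induction as well. If $f':X'\longto X$ also satisfies the conditions, then its first center is a closed subscheme of $X$ whose restriction to $U$ is $V_0$ and in which $V_0$ is schematically dense; such a subscheme is forced to be the schematical closure $W_0$, so the first blow up of $f'$ coincides with $X_1\to X$. Uniqueness for the length-$(n-1)$ sequence over $X_1$ then identifies the rest. (Recall that an isomorphism of blow up sequences, when it exists, is automatically unique, so ``unique'' here is unambiguous.)

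\textbf{Main obstacle.} The content is genuinely light, as the paper indicates; the only points requiring care are the two compatibilities used at each stage, namely that the preimage of $U$ persists as a pro-open subscheme $U_i\into X_i$ so that the step of \S\ref{pushsec} keeps applying, and that $\Bl$ commutes with the flat base change along $U_i\into X_i$. Both follow once one notes that pro-open immersions are flat and stable under base change, so I expect no real difficulty beyond this bookkeeping.
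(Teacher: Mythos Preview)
Your proposal is correct and follows essentially the same approach as the paper, which simply says to take the schematical closure of the first center, invoke Lemma~\ref{schemlem}, and then ``iterate the same construction.'' Your write-up is more detailed---you make the induction explicit and spell out the two compatibilities (flatness of pro-open immersions and stability under base change) that justify the iteration---but the underlying argument is identical.
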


The blow up sequence $f$ will be called the {\em pushforward} of $f_U$ with respect to the pro-open immersion $i$.

\subsubsection{$T$-supported blow up sequences}\label{supsec}
Assume that $X$ is an $S$-scheme, $U\into S$ is an open subscheme, $T=S\setminus U$ and $V=X\times_SU$. Then we say that a blow up sequence $X'\longto X$ is {\em $T$-supported} if its centers lie over $T$. In order to avoid any confusion with $V$-admissibility of the centers, we will not say that $f$ is "$V$-admissible" in this situation (as opposed to \cite{RG} and \cite{temdes}). Actually, we will often be interested in the following two extreme cases: (a) the centers of $f$ are $V$-admissible, and (b) the centers of $f$ are $T$-supported. Note that (a) takes place if and only if $f$ is the pushforward of its restriction $f\times_XV$ with respect to $i\:V\into X$, and (b) takes place if and only if $f\times_XV$ is a sequence of empty blow ups.

\subsubsection{Strict transform}\label{strictsec}
The following definitions and facts about blow up sequences follow from the well known particular case of usual blow ups (i.e. the sequences of length one), which can be found in \cite[\S1]{Con}. Given a closed subscheme $Z_0\into X_0$ we define the {\em strict transform} $Z_n=f^!(Z_0)$ of $Z_0$ under $f\:X_n\longto X_0$ as the iterative strict transform of $Z_0$ with respect to $f_i$'s. Note that $Z_n\longto Z_0$ is the blow up sequence whose centers are the scheme-theoretic preimages of the centers $V_i$ of $f$. More concretely, $f$ induces a blow up sequence $Z_n\longto Z_0$, where $Z_i=f_i^!(Z_{i-1})$ is isomorphic to the blow up of $Z_{i-1}$ along $V_{i-1}\times_{X_{i-1}}Z_{i-1}$.

\subsection{Desingularization}

\subsubsection{Desingularization of schemes}\label{dessec}
By {\em desingularization} of a scheme $X$ we mean an $X_\sing$-supported contracted blow up sequence $f\:X'\longto X$ with regular $X'$. We will see in \S\ref{compmorsec} why it is convenient to forbid empty blow ups.  Note also that although the definition makes sense for any locally noetherian scheme $X$, we will study only the case when $X$ is generically reduced. The reason for this was explained in Remark \ref{redrem}.

\subsubsection{Strong desingularization}
A desingularization of a scheme is called {\em strong} if the centers of its blow ups are regular schemes. We remark that most of the recent approaches based on order reduction of marked ideals lead to a desingularization which is not strong. In particular, this is the case for the algorithms from \cite{Wl} and \cite{Kol}, see \cite[3.106]{Kol} or \cite[8.2]{bmfun}. A strong desingularization for varieties of characteristic zero can be found in \cite{Hir}, \cite{BM}, or \cite{Vil}.

\subsubsection{Compatibility with morphisms}\label{compmorsec}
We say that desingularizations $g'\:Y'\longto X'$ and $g\:Y\longto X$ of $X'$ and $X$, respectively, are {\em compatible} with respect to a flat morphism $f\:X'\to X$ if $g\times_XX'$ is a trivial extension of $g'$. Since $g'$ is contracted, this happens if and only if $g'$ is obtained by removing all empty blow ups from $g\times_XX'$. The latter are precisely the pullbacks of blow ups whose centers are disjoint from $f(X')$. In particular, if $X_\sing\subset f(X')$, e.g. $f$ is surjective, then automatically $g'=g\times_XX'$. We will see in Lemma \ref{sublem} that this fact has simple but subtle and important consequences. Actually, we will only be interested in the case when $f$ is regular.

\subsubsection{Functorial desingularization}\label{funsubsec}
If $\gtC$ is a subcategory of $\QE_\reg$, then by a {\em functorial (strong) desingularization on $\gtC$} we mean a rule (or a blow up sequence functor) $\calF$ which to each object $X$ from $\gtC$ assigns a (strong) desingularization $\calF(X)\:\oX\longto X$ in a way compatible with the morphisms from $\gtC$, i.e. for any morphism $f\:X'\to X$ from $\gtC$ the desingularizations $\calF(X)$ and $\calF(X')$ are compatible with respect to $f$. Clearly, one can view functorial desingularization as a functor to an appropriate category of blow up sequences, but we do not need to develop such formalized approach. Nevertheless, it will be convenient to abuse the language and occasionally call $\calF$ a desingularization functor. Note also that a non-functorial desingularization corresponds to the case when $\Mor(\gtC)$ consists of identities only.

\begin{lem}\label{sublem}
Assume that $\gtC$ is closed under taking finite disjoint unions. If $\calF$ is a functorial desingularization on $\gtC$ and $f,g\:X'\to X$ are two morphisms in $\gtC$, then $f^*\calF(X)=g^*\calF(X)$ as blow up sequences (taking into account the empty blow ups).
\end{lem}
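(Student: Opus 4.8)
The plan is to derive everything from the single sequence $\calF(X')$, applying the defining compatibility of a functorial desingularization twice. Since both $f$ and $g$ are morphisms of $\gtC$, the compatibility of \S\ref{funsubsec} applies to each separately: $\calF(X')$ is a trivial extension of $f^*\calF(X)=\calF(X)\times_X X'$ (fibre product along $f$) and, simultaneously, a trivial extension of $g^*\calF(X)=\calF(X)\times_{X,g}X'$ (fibre product along $g$). Thus $f^*\calF(X)$ and $g^*\calF(X)$ are both obtained from one and the same blow up sequence $\calF(X')$ by deleting empty blow ups, so they are trivial-extension-related to each other. Since the deletions are order-preserving and never touch a non-empty blow up, both sequences carry the \emph{same} sequence of non-empty blow ups, with identical centers, namely those of $\calF(X')$ read off in order.

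Next I would record the combinatorial bookkeeping supplied by base change. By the flat base change compatibility of \S\ref{flatsec} (recall morphisms of $\gtC\subseteq\QE_\reg$ are flat), $f^*\calF(X)$ and $g^*\calF(X)$ are blow up sequences over $X'$ of the same length $n=\length(\calF(X))$, whose $i$-th center is the pullback of the $i$-th center $V_i$ of $\calF(X)$ along the relevant base change of $f$, resp.\ $g$. In particular the $i$-th blow up of $f^*\calF(X)$ is empty precisely when $V_i$ is empty or its support misses the image of the base-changed map, and likewise for $g$; when $V_i=\emptyset$ both pullbacks are automatically empty, so those positions match for free.

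The key step is to upgrade \emph{same non-empty content} to genuine equality of the full length-$n$ sequences, i.e.\ to show the empty blow ups occupy the same positions and that the two sequences are identified in the sense of \S\ref{blseqsec}. Here I would argue by contradiction using the common trivial extension: if some step $i$ were non-empty for $f^*\calF(X)$ but empty for $g^*\calF(X)$, then the corresponding non-empty blow up, which is present in $\calF(X')$ because $\calF(X')$ extends $f^*\calF(X)$ by empty blow ups only, would have to appear as a non-empty blow up of $g^*\calF(X)$ as well (since $\calF(X')$ also extends $g^*\calF(X)$ by empty blow ups only), forcing its center to be matched on the $g$-side. Tracking these matchings through the order-preserving identifications with $\calF(X')$ pins the non-empty blow ups of $f^*\calF(X)$ and $g^*\calF(X)$ to the same steps with the same centers; the intervening empty blow ups then agree, and an induction on the intermediate schemes, identified through $\calF(X')$, yields the required isomorphism of blow up sequences $f^*\calF(X)=g^*\calF(X)$ including the empty blow ups.

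I expect the genuine obstacle to be exactly this last reconciliation of the empty blow ups. The subtlety is that a priori $f^*\calF(X)$ and $g^*\calF(X)$ live over $X'$ via \emph{different} structure maps to $X$, so their intermediate schemes are different fibre products and the desired equality is an equality in the sense of the isomorphism convention of \S\ref{blseqsec} rather than a literal coincidence; the identification must be routed through $\calF(X')$, and it is precisely the empty blow ups that provide the synchronization (cf.\ their role explained in \S\ref{blsec}). Granting that bookkeeping, the statement is a formal consequence of imposing the single compatibility condition with target sequence $\calF(X')$ against both $f$ and $g$.
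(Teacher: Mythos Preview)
Your approach has a genuine gap at exactly the point you yourself flag as delicate. Knowing only that $\calF(X')$ is a trivial extension of both $A:=f^*\calF(X)$ and $B:=g^*\calF(X)$, together with $|A|=|B|$, does \emph{not} force $A=B$. Concretely, if $\calF(X')$ has the shape $(\emptyset,\ W,\ \emptyset)$ (one non-empty center $W$ flanked by two empty blow ups), then removing the last empty blow up yields $(\emptyset,\ W)$ while removing the first yields $(W,\ \emptyset)$; both are length-$2$ contractions of the same sequence, both have the same non-empty content, yet they are different blow up sequences. Your contradiction argument in the ``key step'' paragraph does not exclude this: the order-preserving injections from $A$ and $B$ into $\calF(X')$ need not coincide, so matching non-empty centers through $\calF(X')$ tells you nothing about \emph{where} the surviving empty blow ups sit in $A$ versus $B$. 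Compatibility applied to $f$ and $g$ separately simply does not carry enough information.

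The paper's proof supplies the missing idea: replace $f$ and $g$ by the \emph{surjective} morphisms $\of=f\sqcup\Id_X$ and $\og=g\sqcup\Id_X$ from $X'\sqcup X$ to $X$. For surjective morphisms the compatibility clause in \S\ref{funsubsec} gives an exact equality $\calF(X'\sqcup X)=\of^*\calF(X)=\og^*\calF(X)$ (no trivial extension ambiguity at all), and restricting to the $X'$-summand yields $f^*\calF(X)=g^*\calF(X)$. The disjoint-union trick is precisely what pins down the empty blow ups; without it, the synchronization you need is unavailable.
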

\begin{proof}
Since $\of=f\coprod\Id_X$ and $\og=g\coprod\Id_X$ are surjective regular morphisms from $\oX=X'\coprod X$ to $X$, we have that $\of^*\calF(X)=\calF(\oX)=\og^*\calF(X)$. Restricting this equality over $X'\into \oX$ gives the required equality.
\end{proof}

\begin{rem}
(i) Up to empty blow ups both $f^*\calF(X)$ and $g^*\calF(X)$ are equal to $\calF(X')$, so the lemma actually asserts that the empty blow ups are inserted at the same places.

(ii) The proof might look as casuistics, but it has a real meaning. Desingularizing $X$ and $X'$ simultaneously by $\calF(\oX)$ we have to compare their singularities and decide which one should be blown up earlier, and the trace of this information on $X'$ hides in the empty blow ups.
\end{rem}

\subsubsection{Restriction to affine subcategory}\label{affsec}

\begin{lem}\label{afflem}
Let $\gtC$ be any subcategory in $\QE$ such that

(i) $\gtC$ is closed under taking finite disjoint unions;

(ii) if $f\:Y\into X$ is an open immersion and $X\in\Ob(\gtC)$ then $f\in\Mor(\gtC)$ (in particular, $Y\in\Ob(\gtC)$).

Then any desingularization functor $\calF$ on $\gtC$ is uniquely determined by its restriction $\calF_\aff$ onto the full subcategory $\gtC_\aff$ formed by the affine schemes from $\gtC$. Moreover, any (strong) desingularization functor $\calF_\aff$ on $\gtC_\aff$ extends uniquely to a (strong) desingularization functor on $\gtC$.
\end{lem}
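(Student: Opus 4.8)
The plan is to reduce everything to the affine case by faithfully flat descent along the projection $\pi:\oX\to X$ attached to a finite affine cover, where $\oX=\bigsqcup_{i=1}^n U_i$ is the disjoint union of the members of the cover. Since $X$ is noetherian it is quasi-compact, so the cover may be taken finite, and then $\oX$, being a finite disjoint union of affine schemes, is itself affine; by hypothesis (i) it lies in $\gtC_\aff$, each $U_i$ lies in $\gtC$ by (ii), and the induced morphism $\pi:\oX\to X$ is a regular surjection in $\gtC$, hence faithfully flat. All the content will come from playing the two gluing morphisms $\oX\times_X\oX\rra\oX$ against Lemma \ref{sublem} and from the compatibility axioms of \S\ref{funsubsec}.

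First I would settle uniqueness. Let $\calF$ be any desingularization functor on $\gtC$. Because $\pi$ is a surjective regular morphism in $\gtC$, the compatibility of \S\ref{dessec} forces $\calF(\oX)$ and $\calF(X)$ to \emph{coincide} after base change, i.e. $\calF(X)\times_X\oX=\calF(\oX)=\calF_\aff(\oX)$, the last equality holding because $\oX$ is affine. Thus the pullback of the blow up sequence $\calF(X)$ along the faithfully flat $\pi$ is completely determined by $\calF_\aff$. As a blow up sequence is determined by its centers, which are closed subschemes, and closed subschemes descend along the faithfully flat covering $\pi$ (quasi-coherent ideals satisfy fpqc descent), $\calF(X)$ itself is determined by $\calF_\aff$. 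Applied to every $X$, this shows that two functors with the same restriction to $\gtC_\aff$ agree.

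For existence I would \emph{define} $\calF(X)$ by descending $\calF_\aff(\oX)$ along $\pi$. The descent datum is the assertion that the two pullbacks of $\calF_\aff(\oX)$ to $\oX\times_X\oX=\bigsqcup_{i,j}(U_i\cap U_j)$ along $p_1,p_2$ agree. To check it, cover each overlap $U_i\cap U_j$ by affine opens $W$; the restrictions $p_1|_W,p_2|_W:W\rra\oX$ are morphisms in $\gtC$ with the same affine source $W$ and the same affine target $\oX$, so Lemma \ref{sublem} gives $(p_1|_W)^*\calF_\aff(\oX)=(p_2|_W)^*\calF_\aff(\oX)$. As equality of blow up sequences is local on the base, $p_1^*\calF_\aff(\oX)=p_2^*\calF_\aff(\oX)$, and the datum descends. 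Concretely one descends stage by stage: the first center and its descent datum descend to a center on $X$, one forms the blow up, and since blow ups commute with the flat base change $\pi$ (\S\ref{flatsec}) the construction is iterated along the whole sequence. That the descended $\calF(X)$ is a (strong) desingularization is then inherited from $\calF_\aff(\oX)$ by faithfully flat descent: regularity of the final source and of the centers descends along $\pi$, while the centers stay $X_\sing$-supported because $\oX_\sing=\pi^{-1}(X_\sing)$ by \S\ref{compsec}.

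It remains to verify that $\calF$ is functorial, and here the genuine difficulty, the synchronization of the empty blow ups, is met. Given a regular morphism $g:X'\to X$ in $\gtC$, I would choose the covers so that the affine cover of $X'$ refines $\{g^{-1}(U_i)\}$; then $g$ assembles into a regular morphism $G:\oX'\to\oX$ in $\gtC$ lying over $g$, with $\pi\circ G=g\circ\pi'$. Applying the \emph{given} functoriality of $\calF_\aff$ to $G$, and using the defining relation $\calF_\aff(\oX)=\calF(X)\times_X\oX$, relates $\calF_\aff(\oX')$ to $G^*\calF_\aff(\oX)=(\calF(X)\times_XX')\times_{X'}\oX'$ by a trivial extension; descending this relation along the faithfully flat $\pi'$ yields exactly the required compatibility between $\calF(X')$ and $\calF(X)\times_XX'$. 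Crucially, the placement of the empty blow ups is not something I arrange by hand: it is transported from the already functorial $\calF_\aff$ through the faithfully flat pullbacks, which is precisely why routing the argument through $\oX$, rather than gluing the $\calF_\aff(U_i)$ directly, is essential. Independence of the construction from the cover and uniqueness of the extension are then free: any cover produces a functorial desingularization restricting to $\calF_\aff$, so by the uniqueness clause already proved they all coincide. The main obstacle is thus not a hard estimate but the bookkeeping of empty blow ups, which the descent-from-$\oX$ strategy converts into the single clean input of Lemma \ref{sublem}.
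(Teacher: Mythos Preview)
Your proposal is correct and follows essentially the same approach as the paper. Both arguments apply $\calF_\aff$ to the affine disjoint union $\oX=\sqcup_i U_i$ and then use Lemma \ref{sublem} to show that the resulting blow up sequence is compatible on overlaps; you phrase the passage from $\oX$ to $X$ as faithfully flat (Zariski) descent while the paper phrases it as gluing, but for a Zariski cover these are the same operation. Your write-up is more detailed---in particular you spell out uniqueness and functoriality, which the paper delegates to \cite[3.37]{Kol} and \cite[\S7.1]{bmfun}---but the key idea and the key input (Lemma \ref{sublem}) are identical.
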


We only outline the proof of the lemma, since the argument is known. If an object $X$ of $\gtC$ is covered by open affine subschemes $X_1\. X_n$ then $X'=\coprod_{i=1}^n X_i$ is in $\gtC$ and we can consider the desingularization $\calF_\aff(X')\:\coprod_{i=1}^nY_i\to X'$. The blow up sequences $Y_i\longto X_i$ agree over the intersections $X_i\cap X_j$ by Lemma \ref{sublem}, and hence glue to a global desingularization $\calF(X)\:Y\to X$. We refer to \cite[3.37]{Kol} or \cite[\S7.1]{bmfun} for a detailed proof along this line.

\subsubsection{Abstract invariant}\label{invsec}

\begin{rem}\label{invrem}
(i) It is critical for Lemma \ref{afflem} to have disjoint unions in $\gtC$ since $\calF_\aff(X')$ contains more information than all blow up sequences $\calF_\aff(X_i)$. This is exactly the information about the order of the blow ups in $\calF_\aff(X')$ (in particular, sometimes we may simultaneously perform few blow ups from different
$\calF_\aff(X_i)$'s). Alternatively, one can notice that we glue trivial extensions of $\calF_\aff(X_i)$'s rather than these sequences themselves, and the list of inserted empty blow ups is the additional information. Obviously, we cannot combine a single blow up sequence $\calF_\aff(X')$ without this information.

(ii) If an algorithm is controlled by an invariant, as in \cite{Wl} or \cite{bmfun}, then the condition (i) in the lemma is redundant. In this case, the invariant dictates the order of the blow ups, so the gluing is obvious. Thus, we use disjoint unions to implicitly encode nearly the same information as contained in the
invariant.

(iii) To the best of my knowledge (which might be very incomplete), the idea to use disjoint unions instead of invariants is due to Kollar, see \cite[3.38]{Kol}. However, as we will immediately see both approaches are rigorously equivalent.

(iv) To make sense of the above claim we associate to any desingularization functor $\calF$ an ordered set as follows. If $X,Y$ are two schemes from $\gtC$ with desingularizations $\calF(X)\:X_n\longto X_0=X$ and $\calF(Y)\:Y_m\longto Y_0=Y$ and points $x\in X_i$ and $y\in Y_j$, then we say that $x$ and $y$ are $\calF$-equivalent if $\calF(X\coprod Y)$ simultaneously blows them up for the first time. We warn the reader that the equivalence class of $x$ depends on the whole tower $X_i\longto X$ rather then only on the local situation on $X_i$ (i.e. it depends on the history, at least to some extent). We denote the above equivalence class as $\inv(x)$ and the set of all equivalence classes as $\Inv(\calF)$. The latter set is naturally provided with a total order such that given points $x,y$ as above, $\calF$ first blows up the point with larger invariant. Theoretically, for each $x$ as above we have associated an invariant $\inv(x)\in\Inv(\calF)$ controlling $\calF$, though for practical applications one might often wish to have a more constructive description of the invariant.

(v) In principle, the structure of $\Inv(\calF)$ can be very (and unnecessary) complicated. For example, one can take an existing algorithm (e.g. of Bierstone-Milman) and appropriately refine the equivalence relation it defines on singularities. It is easy to see, that in this way one can construct $\Inv(\calF)$ that contains $\bfQ$ as an ordered subset. On the other hand, the invariant set of the algorithm of Bierstone-Milman (and probably, of many other known algorithms) is well-ordered and countable. It seems that its order can be bounded by $\omega^{\omega^2}$. The latter is a rough upper bound, and perhaps one can give a much sharper one. On the other hand it seems certain that one cannot hope to construct $\calF$ with a too small set of invariants, e.g. with $\Inv(\calF)=\bfN$. For example, a simple argument in \cite[6.3.1]{survey} shows  that $\Inv(\calF)$ is much larger than $\bfN$ for the algorithm of Bierstone-Milman and for the algorithm we will construct in this paper.
\end{rem}

\subsubsection{Functorial desingularization of varieties}
The strong desingularization from \cite{Hir} is neither functorial nor algorithmic. The desingularization from \cite{BM} is given in an explicit algorithmic way, and the functoriality of the latter algorithm with respect to smooth morphism was checked later in \cite{bmfun}. Finally, it was observed in \cite{bmt} that a general regular morphism between varieties can be reduced to smooth morphisms using certain limit procedures. Using this, compatibility with all regular morphisms was established in \cite{bmt}, and the result was also extended to finite disjoint unions of varieties. We formulate the latter theorem for reader's convenience, and it will be used essentially when we will generalize its assertion to $\QE_{p=0,\reg}$.

\begin{theor}\label{vardesth}
There exists functorial strong desingularization $\calF_\Var$ on the category $\Var_{p=0,\reg}$ such that the set $\Inv(\calF_\Var)$ of invariants is countable.
\end{theor}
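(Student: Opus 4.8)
The plan is to obtain the existence and functoriality statements from the cited literature and then to verify the countability of the invariant set by inspecting the combinatorial shape of the Bierstone--Milman invariant. First I would take $\calF_\Var$ to be the order-reduction algorithm of \cite{bmfun}, which produces a strong desingularization of each variety of characteristic zero and is functorial with respect to all \emph{smooth} morphisms. To upgrade smooth-functoriality to regular-functoriality, I would invoke \cite{bmt}: a regular morphism $f\colon X'\to X$ of varieties is a filtered limit of smooth morphisms (after replacing $X$ by a suitable affine approximation in the sense of \cite[$\rm IV_3$, \S8.8]{ega}), and since both the blow up sequence $\calF_\Var(X)$ and its centers are locally of finite presentation and the controlling invariant at a point depends only on a finite jet of the local data, the construction commutes with such limits; passing to the limit then yields the required compatibility of $\calF_\Var(X)$ and $\calF_\Var(X')$. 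The extension to the objects of $\Var_{p=0,\reg}$, which are disjoint unions of (possibly non-isomorphic) varieties defined over different fields, is likewise carried out in \cite{bmt} and is consistent with the gluing formalism of \S\ref{affsec} and Remark \ref{invrem}: one applies the algorithm componentwise and synchronizes the resulting sequences by the common invariant, which dictates the order in which blow ups from distinct components are performed.

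For the countability of $\inv(\calF_\Var)$, I would use the explicit structure of the Bierstone--Milman invariant. By Remark \ref{invrem}(iv), the set $\inv(\calF_\Var)$ is in bijection with the collection of values that the controlling invariant actually attains on the points occurring in the towers $\calF_\Var(X)$, as $X$ ranges over $\Var_{p=0,\reg}$. The desingularization invariant of \cite{bmfun} assigned to such a point is a finite string whose entries are rational numbers (orders of the successive coefficient ideals of the marked ideals produced by the algorithm) interlaced with non-negative integers (counting the relevant old exceptional divisors), the length of the string being bounded in terms of the dimension. Since $\bbQ$ is countable and the set of finite strings over a countable alphabet is countable, only countably many invariant values can occur; hence $\inv(\calF_\Var)$ is a countable (totally ordered) set.

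I expect the main obstacle to lie entirely in the input results I am citing rather than in the bookkeeping above: namely, the verification in \cite{bmt} that the finite-presentation and limit argument genuinely reduces regular-functoriality to the smooth-functoriality of \cite{bmfun}, and the verification that the invariant is smooth-functorial and takes rational values of the claimed combinatorial form. Granting these, the remaining content is the identification of $\inv(\calF_\Var)$ with the set of attained invariant values, which is immediate from the definition in Remark \ref{invrem}(iv), together with the observation that two points of $X\sqcup Y$ are blown up simultaneously by $\calF_\Var$ precisely when their invariants coincide, so that distinct invariant values correspond bijectively to the abstract equivalence classes and the countability bound transfers directly.
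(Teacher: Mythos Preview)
Your argument for existence and regular-functoriality matches the paper's: both take the Bierstone--Milman algorithm from \cite{bmfun} as the starting point and invoke \cite{bmt} for the passage from smooth to regular morphisms and for the extension to disjoint unions over varying ground fields.

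Where you diverge is in the countability argument. You argue concretely, using the explicit combinatorial shape of the Bierstone--Milman invariant (finite strings of rationals and integers). The paper instead gives an abstract argument that applies to \emph{any} desingularization functor on $\Var_{p=0,\reg}$, not just the particular one chosen: by \cite{bmt} every object of $\Var_{p=0,\reg}$ admits a regular morphism to a variety over $\bfQ$, so $\calF_\Var$ is induced from its restriction $\calF_\bfQ$ to $\bfQ$-varieties and $\inv(\calF_\Var)=\inv(\calF_\bfQ)$; the latter is countable simply because there are only countably many $\bfQ$-varieties, points, and blow up sequences. Your route is correct and perhaps more tangible, but it depends on the internal structure of one specific algorithm, whereas the paper's route shows that countability is an automatic feature of regular-functoriality and requires no inspection of the invariant at all.
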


\begin{rem}
It follows from \cite{bmt} that any desingularization functor $\calF_\Var$ on $\Var_{p=0,\reg}$ is induced from its restriction $\calF_\bfQ$ to the varieties over $\bfQ$ because any object of $\Var_{p=0,\reg}$ admits a regular morphism to a variety over $\bfQ$. It follows immediately that $\Inv(\calF_\Var)=\Inv(\calF_\bfQ)$, but the latter set is countable because the geometry over $\bfQ$ is countable (up to an isomorphism, there are countably many varieties, points and blow up sequences over $\bfQ$). So, countability in Theorem \ref{vardesth} is automatic.
\end{rem}

\subsection{Formal analogs}\label{formalsec}
In this section we recall very briefly basic notions from the theory of formal desingularization, and we refer to \cite[\S3]{temdes} for details. All formal schemes are assumed to be locally noetherian. Formal schemes and their ideals will be denoted as $\gtX$, $\gtY$, $\gtI\subset\calO_\gtX$, etc. Throughout \S\ref{formalsec} we assume that the characteristic is zero, i.e. all (formal) schemes are defined over $\bfQ$. This is only needed in order to be able to use Theorem \ref{NNth}. The reader that trusts Gabber's theorem \ref{gabth} (as the author does) may remove this assumption and replace each reference to Theorem \ref{NNth} with a reference to Theorem \ref{gabth}.

\subsubsection{Closed fiber}
The maximal ideal of definition defines a closed subscheme $\gtX_s$ called the closed fiber of $\gtX$. Topologically, $|\gtX|=|\gtX_s|$.

\subsubsection{Support of ideals}
We say that an ideal $\gtI\subset\calO_\gtX$ is {\em supported} on a closed formal subscheme $\gtZ=\Spf(\calO_\gtX/\gtJ)$ if $\gtJ^n\subset\gtI$ for large $n$. So, an ideal is open if and only if it is supported on $\gtX_s$.

\begin{rem}
(i) For an open ideal $\gtI$ one can also define its support set-theore\-tically as $|\Spf(\calO_\gtX/\gtI)|$ or as the associated reduced closed subscheme of $\gtX_s$ which is the reduction of $\Spec(\calO_\gtX/\gtI)$.

(ii) In general, one can define support set-theoretically using a generic fiber of $\gtX$ (there are different definitions of the latter in rigid, analytic or adic geometries).
\end{rem}


\subsubsection{Quasi-excellent formal schemes}\label{qeforsec}
We give the following definition of quasi-excellence, which is a priori more restrictive than its analog in \cite[\S3.1]{temdes}. A locally noetherian formal scheme $\gtX$ is {\em quasi-excellent} or {\em qe} if for any morphism $\Spf(A)\to\gtX$ of finite type the ring $A$ is qe. It follows from Theorem \ref{NNth}
that $\gtX$ is qe if and only if it admits a covering by open subschemes $\Spf(A_i)$ with qe rings $A_i$. In particular, Gabber's theorem implies that all reasonable definitions of quasi-excellence coincide (including this definition and the definition in \cite{temdes}) and that quasi-excellence is preserved under taking formal
completion along a closed subscheme.

\subsubsection{Formal blow ups}
In the affine case, a formal blow up $\hatBl_I(\Spf(A))\to\Spf(A)$ is the formal completion of the blow up $\Bl_I(\Spec(A))\to\Spec(A)$. This definition is compatible with formal localizations on the base and hence globalizes to the case of a general formal blow up $\hatBl_\gtI(\gtX)\to\gtX$ along an ideal $\gtI\subset\calO_\gtX$.

\subsubsection{Charts}\label{fchartsec}
Thus, the formal blow ups are glued from the {\em charts} $\Spf(A\{I/g\})$ with $g\in I$, where we set $A\{I/g\}=\wh{A[I/g]}$. We warn the reader that $A\{I/g\}$ does not have to be a subring of $A_{\{g\}}$ though $A[I/g]\subset A_g$. For example, set $R=k[[\pi]]$ and $A=R\{t\}$ so that $\gtX=\Spf(A)$ is a formal affine line over $\Spf(R)$. Consider the $\pi$-chart of the blow up along the open ideal $(t,\pi)$. It is of the form $\Spf(A\{t/\pi\})$, and one easily sees that $A\{t/\pi\}\toisom R\{t'\}$ where $t=\pi t'$. On the other hand, $A_{\{\pi\}}=0$ because $\pi$ is topologically nilpotent.

\subsubsection{Compatibility with usual blow ups}
Formal completion is compatible with (formal) blow ups, i.e. it takes blow ups of schemes to formal blow ups of formal schemes.

\subsubsection{Support}
If $\gtX$ is a formal $\gtS$-scheme and $\gtZ\into\gtS$ is a closed formal subscheme then we say that a formal blow up $\hatBl_\gtJ(\gtX)\to\gtX$ is {\em $\gtZ$-supported} if $\gtJ$ is $\gtZ\times_\gtS\gtX$-supported, i.e. $\gtI^n\calO_\gtX\subset\gtJ$ where $\gtZ=\Spf(\calO_\gtS/\gtI)$.

\subsubsection{Formal blow up sequences}
A {\em formal blow up sequence} $\gtf\:\gtX_n\longto\gtX$ is defined in an obvious way. Such a sequence is {\em $\gtZ$-supported} for a formal subscheme $\gtZ\into\gtX$ if all centers of $\gtf$ are $\gtZ$-supported.

\subsubsection{Singular locus}
Singular locus of a qe formal scheme is a reduced closed subscheme $\gtX_\sing$ or the corresponding ideal $\gtI\subset\calO_\gtX$. For an affine formal scheme $\Spf(A)$ this is the ideal that defines $\Spec(A)_\sing$, and this local definition globalizes because formal localizations are regular morphisms on qe formal schemes. Singular loci are compatible with formal completions: if a scheme $X$ is qe (and so its formal completion $\gtX=\hatX_\calI$ is qe by Gabber's theorem) then $\gtX_\sing$ is the completion of $X_\sing$ along $\calI\calO_{X_\sing}$ by \cite[3.1.4]{temdes}. One defines the {\em non-reduced locus} of a qe $\gtX$ similarly, and says that $\gtX$ is {\em regular} or {\em reduced} if the corresponding locus is empty.

\begin{rem}
Though I do not know such examples, it seems probable that regularity (and even reducedness) can be destroyed by formal localization of a noetherian adic ring. If this is the case then these notions do not make any sense for general noetherian formal schemes. At the very least, some examples show that reducedness can be destroyed by formal localizations in the non-noetherian case.
\end{rem}

\subsubsection{Desingularization}
A {\em (strong) desingularization} of $\gtX$ is defined similarly to the scheme case: it is an $\gtX_\sing$-supported formal blow up sequence with regular source (and regular centers).

\subsubsection{Rig-regularity}
We say that $\gtX$ is {\em rig-regular} if its singular locus is given by an open ideal (i.e. is $\gtX_s$-supported). Note that a desingularization in the rig-regular case is given by blowing up open ideals, i.e. blowing up formal subschemes which are usual schemes.

\begin{lem}\label{formseqlem}
Let $X$ be a qe scheme with a closed subscheme $Z$ and let $\gtX=\hatX_Z$ denote the formal completion, then

(i) any $Z$-supported blow up sequence $\gtf\:\gtX'\longto\gtX$ is the completion of a uniquely defined $Z$-supported blow up sequence $f\:X'\longto X$;

(ii) if $X_\sing\subset Z$ then any (strong) desingularization $\gtf\:\gtX'\longto\gtX$ is the completion of a uniquely defined (strong) desingularization $f\:X'\longto X$.
\end{lem}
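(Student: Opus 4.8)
The plan is to reduce everything to a single local correspondence between open ideals on the completion and ideals on $X$ supported on $Z$, prove (i) by induction on the length of the sequence, and then deduce (ii) by checking that the desingularization conditions descend along the completion morphism. Writing $\calI\subset\calO_X$ for the ideal of $Z$, so that $\calO_\gtX=\varprojlim_n\calO_X/\calI^n$, I would first record that $\calO_X/\calI^m=\calO_\gtX/\calI^m\calO_\gtX$ for every $m$; hence the rule $J\mapsto J\calO_\gtX$ is a bijection between the $Z$-supported ideals of $\calO_X$ (those containing some $\calI^m$) and the open ideals of $\calO_\gtX$ (those containing some $\calI^m\calO_\gtX$), with inverse given by contraction along the faithfully flat completion homomorphism. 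By the stated compatibility of formal completion with blow ups, under this bijection $\hatBl_{J\calO_\gtX}(\gtX)$ is exactly the completion of $\Bl_J(X)$ along the preimage of $Z$. Since a $Z$-supported center on $\gtX$ is by definition an open ideal, this settles a single blow up, and faithful flatness gives uniqueness of $J$.

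For (i) in general I would induct on the length $n$, carrying the assertion that $\gtX_i$ is the completion $\wh{(X_i)}_{Z_i}$ of $X_i$ along $Z_i:=X_i\times_X Z$ and that the reconstructed $X_i\longto X$ is $Z$-supported. As $X_i$ is of finite type over the qe scheme $X$ it is qe, and its completion is qe by Theorem \ref{gabth}, so the correspondence above applies at stage $i$: the $Z$-supported center $\gtV_i\subset\calO_{\gtX_i}$ is $V_i\calO_{\gtX_i}$ for a unique ideal $V_i$ supported on $Z_i$, blowing up $V_i$ yields $X_{i+1}$, and blow-up compatibility identifies $\gtX_{i+1}$ with $\wh{(X_{i+1})}_{Z_{i+1}}$. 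The locally defined $V_i$ agree on overlaps by uniqueness, so they glue to global coherent ideals; this gives the unique $Z$-supported $f:X'\longto X$ completing to $\gtf$.

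For (ii) the key preliminary is that $X_\sing\subset Z$ forces $\gtX$ to be rig-regular. Indeed, since $X_\sing$ is reduced and set-theoretically contained in $Z=V(\calI)$, the ideal $\calI$ lies in the radical ideal $\calI_{X_\sing}$, whence $\calI\calO_{X_\sing}=0$; by compatibility of singular loci with completion \cite[3.1.4]{temdes} this shows $\gtX_\sing=X_\sing$ is the usual reduced scheme inside $\gtX_s$. Consequently every $\gtX_\sing$-supported ideal is already $Z$-supported (using $\calI\subset\calI_{X_\sing}$ stage by stage), so the desingularization $\gtf$ is a $Z$-supported blow up sequence and part (i) produces a unique $Z$-supported $f:X'\longto X$. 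That $f$ is $X_\sing$-supported follows by transporting the support condition on each $\gtV_i$ back through the bijection. For regularity of $X'$ I would again use \cite[3.1.4]{temdes}: regularity of $\gtX'$ means the completion of $X'_\sing$ along $Z'=X'\times_X Z$ is empty, i.e. $X'_\sing\cap Z'=\emptyset$; but away from $Z$ the $Z$-supported $f$ is an isomorphism onto the regular locus $X\setminus Z$, so $X'_\sing=\emptyset$. Applying the same argument to each center $V_i$ (whose support lies in $Z_i$) gives $V_i$ regular in the strong case, finishing (ii).

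The step I expect to be the main obstacle is making the inductive identification $\gtX_i=\wh{(X_i)}_{Z_i}$ together with the open-ideal correspondence fully rigorous in the non-affine, sequential setting — in particular verifying that the blow-up/completion compatibility is preserved at each stage and that the reconstructed centers glue coherently. By contrast, once rig-regularity of $\gtX$ is established the desingularization bookkeeping in (ii) is essentially formal, resting only on the regularity of the completion homomorphism and on the fact that $f$ is an isomorphism over $X\setminus Z$.
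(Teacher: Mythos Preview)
Your proposal is correct and follows essentially the same route as the paper's proof: identify $Z$-supported centers with open ideals, induct on the length using compatibility of completion with blow ups, and for (ii) use the compatibility of singular loci with completion. The paper's argument is in fact much terser than yours (a single short paragraph), so your expansion is a faithful unpacking of it; the only superfluous step is the appeal to Gabber's Theorem in the inductive step of (i), since the open-ideal correspondence and the blow-up/completion compatibility hold for any noetherian $X$ and do not require quasi-excellence of $\gtX_i$.
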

\begin{proof}
Note that $\gtX$ is qe by Theorem \ref{NNth}. Any $Z$-supported closed subscheme of $\gtX$ is given by an open ideal, so it is defined already as a closed subscheme in $X$. So, the center $V_0$ of the first blow up of $\gtf$ is a closed subscheme of $X$ and we can set $X_1=\Bl_{V_0}(X)$. Then $\gtX_1$ is the completion of $X_1$, hence we can algebraize the center $V_1\into\gtX_1$ by $V_1\into X_1$, and proceed by induction on the length of $\gtf$. This proves (i), and (ii) follows from (i) and the compatibility of formal completions with singular loci.
\end{proof}

\subsubsection{Regular morphisms}\label{formregsec}
A morphism $\gtf\:\gtY\to\gtX$ between qe formal schemes is {\em regular} if there exist affine coverings $\gtX_i=\Spf(A_i)$ and $\gtY_i=\Spf(B_i)$ of $\gtX$ and $\gtY$ such that $\gtf(\gtY_i)\subset\gtX_i$ and the induced homomorphism $A_i\to B_i$ is regular.

\subsubsection{Completions of regular morphisms}
\begin{lem}\label{reglem0}
Let $A$ be a qe ring with an ideal $I$, $\hatA$ be its $I$-adic completion and $B$ be a noetherian $I$-adic $\hatA$-ring. Then the homomorphism $\hatA\to B$ is regular if and only if the homomorphism $A\to B$ is regular.
\end{lem}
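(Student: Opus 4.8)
The plan is to deduce both implications from a single input, namely that the completion homomorphism $\alpha:A\to\hatA$ is regular. This holds because $A$ is qe, hence a G-ring, so its $I$-adic completion is regular (the completion lemma for G-schemes). Writing $\beta:\hatA\to B$ for the map in question and $\gamma=\beta\circ\alpha:A\to B$ for the composite, the implication ``$\beta$ regular $\Rightarrow\gamma$ regular'' is immediate, since regular homomorphisms are stable under composition (flatness composes and geometric regularity of fibres is preserved). So the substance is the converse: assuming $\gamma$ regular, show $\beta$ regular.

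First I would prove that $\beta$ is flat. The key point is the identification $\hatA/I^n\hatA=A/I^nA$ for every $n$. Since $B$ is noetherian and $I$-adically complete, it is $I$-adically ideally separated over both $A$ and $\hatA$, so the local criterion of flatness applies and reduces flatness of $B$ over either ring to flatness of $B/I^nB$ over the corresponding quotient, for all $n$. By the displayed identification these two families of conditions literally coincide, and flatness of $B$ over $A$ (which is part of the regularity of $\gamma$) therefore gives flatness of $B$ over $\hatA$.

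It remains to check that the fibres of $\beta$ are geometrically regular. Over a prime $\gtp'\subset\hatA$ with $\gtp'\supseteq I\hatA$ this is easy: such $\gtp'$ corresponds bijectively to a prime $\gtp\subset A$ with $\gtp\supseteq I$ and $k(\gtp')=k(\gtp)$, and since the relevant tensor product factors through the reduction modulo $I$, where $\hatA$ and $A$ agree, one has $B\otimes_{\hatA}k(\gtp')=B\otimes_A k(\gtp)$, which is geometrically regular because $\gamma$ is regular. Now $B$ is $I$-adically complete, so $IB$ lies in the Jacobson radical of $B$ and every maximal ideal of $B$ contains $I$; consequently $\beta$ has geometrically regular fibre at every closed point of $\Spec B$. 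Finally, every point of the noetherian scheme $\Spec B$ specializes to a closed point, which then lies in the ``good'' locus just described; hence, provided this locus is \emph{open}, a generization argument (a point generizing a closed point lies in every open neighbourhood of that closed point) forces the locus to be all of $\Spec B$, so $\beta$ is regular.

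The hard part will be exactly this openness: that the locus of points at which the flat map $\beta$ has geometrically regular fibre is open, equivalently that the singular locus of the morphism $\beta$ is closed. For a morphism of finite type this is the defining N-property, but here $\beta$ is \emph{not} of finite type, and this is where quasi-excellence has to intervene. I would aim to establish openness by combining the absolute statement --- that the regular locus $(\Spec B)_\reg$ is open, which requires knowing that $B$ is qe (in the intended formal-scheme applications this follows from Gabber's Theorem \ref{gabth}) --- with the compatibility of regular loci under regular morphisms of \S\ref{compsec} and the cancellation of Lemma \ref{complem}, applied to the flat map $\beta$ together with the regular base $\hatA$. Controlling the non-open (``generic fibre'') primes in this way, rather than fibre-by-fibre, is the crux, since for those primes the fibre of $\beta$ is not directly comparable to a fibre of $\gamma$ and can a priori fail to be regular --- it is only the combination of openness with the closed-point computation above that rules this out.
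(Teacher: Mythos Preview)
Your flatness argument via the local criterion is fine, and your identification of the fibre of $\beta$ over an \emph{open} prime $p\subset\hatA$ with the corresponding fibre of $\gamma$ is correct. The reduction to closed points of $\Spec B$ is also the right move --- the paper makes the same reduction: since every prime of $B$ lies in an open one, it suffices that the local homomorphisms $\hatA_p\to B_q$ be regular for all open primes $q\subset B$, and then one localizes. The gap is that you have not shown this.

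What you establish at such a $q$ (with open image $p$) is only that the \emph{closed fibre} of $\hatA_p\to B_q$ is geometrically regular; together with flatness this makes the local map formally smooth, not regular. Your generization argument does not close the gap: if the ``good locus'' is taken to mean $\{q:B_q/\beta(q)B_q\text{ is geometrically regular over }k(\beta(q))\}$, it is not a priori stable under generization, because passing from $q$ to $q'\subsetneq q$ replaces the fibre over $p$ by the fibre over $p'=\beta(q')$, and these are not directly comparable. Your proposed route to openness does not work either: nothing in the hypotheses forces $B/IB$ (hence $B$, via Theorem~\ref{gabth}) to be qe, and Lemma~\ref{complem} cannot be invoked --- in the composition $\Spec B\to\Spec\hatA\to\Spec A$ the known regular morphism is $\Spec\hatA\to\Spec A$, whereas that lemma requires the regular factor to be the morphism out of $\Spec B$.

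The paper supplies exactly the missing step, using Andr\'e's theorem on localization of formal smoothness \cite{And}: Theorem~\ref{gabth} shows that $\hatA$ itself is qe, hence a G-ring, and Andr\'e's theorem then upgrades each formally smooth local homomorphism $\hatA_p\to B_q$ (for open $q$) to a regular one. After that, the localization/generization argument you had in mind goes through.
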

\begin{proof} The direct implication is obvious since the completion homomorphism $A\to\hatA=C$ is regular by quasi-excellence of $A$. Conversely, suppose that $A\to B$ is regular. Since any prime ideal in $B$ is contained in an open prime ideal, in order to prove that $C\to B$ is regular we should show that for any open prime ideal $q\subset B$ with preimage $p\subset C$ the homomorphism $f\:C_p\to B_q$ is regular. Moreover, in view of Andre's theorem on localization of formal smoothness, see \cite{And}, it suffices to check that $f$ is formally smooth because $C$ is qe by Gabber's Theorem \ref{NNth}. Recall that by \cite[$\rm 0_{IV}$, 19.7.1 and 22.5.8]{ega} formal smoothness of the local homomorphism $f$ is equivalent to its flatness and geometric regularity of its closed fiber. The ideals $p$ and $r=p\cap A$ are open, hence $\hatA_r\toisom\hatC_p$. This isomorphism and faithful flatness of $C_p\to\hatC_p$ imply that $pC_p$ is the only prime ideal of $C_p$ over $r$. Hence $C_p/rC_p$ is local Artinian, and since it is regular by quasi-excellence of $A$, it has to be a field. In particular, we obtain that $rC_p=pC_p$. Thus, the closed fiber $B_q/pB_q\toisom B_q/rB_q$ of $f$ is geometrically regular over the residue field $C_p/pC_p\toisom A_r/rA_r$ by regularity of $A\to B$. Finally, the completion homomorphism $C_p\to\hatC_p$ is flat because $C_p$ is noetherian and the homomorphism $\hatC_p\toisom\hatA_r\to\hatB_q$ is flat because $A\to B$ is flat, hence $g\:C_p\to\hatB_q$ is flat and we deduce that $f$ is flat because $g$ is its composition with the faithfully flat completion homomorphism $B_q\to\hatB_q$.
\end{proof}

\begin{cor}\label{regcor}
Let $f\:Y\to X$ be a regular morphism between qe schemes, and let $\calI\subset\calO_X$ and $\calJ\supset\calI\calO_Y$ be ideals with the completions $\gtX=\hatX_\calI$ and $\gtY=\hatY_\calJ$. Then the completion $\gtf\:\gtY\to\gtX$ of $f$ is regular.
\end{cor}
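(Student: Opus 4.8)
The plan is to reduce to an affine, ring-theoretic statement and then deduce it from Lemma \ref{reglem0} together with the regularity of completion homomorphisms of qe rings. Since regularity of a morphism of qe formal schemes is a local condition checked on affine charts (\S\ref{formregsec}), and since $f$ is a morphism \emph{between qe schemes}, I would first pass to compatible affine charts and replace $X$ by $\Spec(A)$ and $Y$ by $\Spec(B)$, where $A$ and $B$ are both qe, $A\to B$ is regular, and $I\subset A$, $J\subset B$ are the ideals corresponding to $\calI$ and $\calJ$; the hypothesis $\calJ\subset\calI\calO_Y$ then reads $J\subset IB$. Writing $\hatA$ and $\hatB$ for the $I$-adic and $J$-adic completions respectively, the morphism $\gtf$ is locally the induced homomorphism $\hatA\to\hatB$, and the goal becomes to show that this homomorphism is regular.

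The first thing to verify is that $\hatB$ is a noetherian $I$-adic $\hatA$-ring, so that Lemma \ref{reglem0} applies. Noetherianity is automatic. For the topology: the very fact that $\gtf$ exists as a morphism of formal schemes completing $f$ forces the image of $I$ to be topologically nilpotent in $\hatB$, i.e. $IB\subseteq\sqrt J$; combined with the hypothesis $J\subseteq IB$ this gives $\sqrt J=\sqrt{IB}$. Hence the $J$-adic and $IB$-adic topologies on $B$ coincide, so $\hatB$ is complete and separated for the $I$-adic topology and is thus an $I$-adic $\hatA$-ring.

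Now I would invoke Lemma \ref{reglem0}, applied to the qe ring $A$, the ideal $I$, and the noetherian $I$-adic $\hatA$-ring $\hatB$: it shows that $\hatA\to\hatB$ is regular if and only if $A\to\hatB$ is regular. The latter homomorphism factors as $A\to B\to\hatB$. Here $A\to B$ is regular by hypothesis, while $B\to\hatB$ is the completion homomorphism of the qe ring $B$ and is therefore regular by quasi-excellence of $B$. Since regular homomorphisms are stable under composition (\cite{Mat}), $A\to\hatB$ is regular, whence $\hatA\to\hatB$ is regular as well. Reading this back over the chosen charts yields the regularity of $\gtf$ in the sense of \S\ref{formregsec}.

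The substantive content — transporting regularity across the completion of the source, from $A$ to $\hatA$ — has already been isolated in Lemma \ref{reglem0}, so the corollary is essentially a formal consequence. The two points I expect to require genuine care are the topological bookkeeping of the second paragraph (making sure $\hatB$ is genuinely $I$-adic, where both inclusions $J\subseteq IB$ and $IB\subseteq\sqrt J$ are used) and the observation that $B$ is itself qe; it is precisely the latter that lets its own completion homomorphism $B\to\hatB$ be regular even though $A\to B$ need not be of finite type, and it is the only place where quasi-excellence of the \emph{target} scheme $Y$ enters.
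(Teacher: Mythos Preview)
Your proof is correct and follows exactly the same route as the paper: reduce to the affine case, factor $A\to\hatB$ as the composition of the regular map $A\to B$ with the completion $B\to\hatB$ (regular since $B$ is qe), and then invoke Lemma \ref{reglem0}. The paper's own proof is two sentences and omits the verification that $\hatB$ is genuinely an $I$-adic $\hatA$-ring; your second paragraph supplies exactly this missing bookkeeping, and your observation that the existence of $\gtf$ forces $IB\subseteq\sqrt J$ (so that together with $J\subseteq IB$ the two topologies agree) is the right way to close that gap.
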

\begin{proof}
We can assume that $X=\Spec(A)$ and $Y=\Spec(B)$, and let $\hatA$ and $\hatB$ be the $\calI$-adic and the $\calJ$-adic completions, respectively. Then the homomorphism $A\to B\to\hatB$ is regular, and therefore $\hatA\to\hatB$ is regular by Lemma \ref{reglem0}.
\end{proof}

\subsubsection{Regularity and affine subschemes}

\begin{lem}\label{reglem}
If a morphism $\gtf\:\gtY\to\gtX$ between qe formal schemes is regular and $\gtX'=\Spf(A')$ and $\gtY'=\Spf(B')$ are open formal subschemes of $\gtX$ and $\gtY$ such that $\gtf(\gtY')\subset\gtX'$, then the induced homomorphism $A'\to B'$ is regular.
\end{lem}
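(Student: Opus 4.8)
The plan is to verify regularity of the homomorphism $A'\to B'$ locally, at the level of completed local rings at a point, where the two affine charts coincide, and then to transfer regularity by means of the cancellation Lemma \ref{complem}(ii). First I would reduce, exactly as in the proof of Lemma \ref{reglem0}, to checking that $A'_p\to B'_q$ is regular for every \emph{open} prime $q\subset B'$ with preimage $p=q\cap A'$: every prime of the adic ring $B'$ is contained in an open (maximal) prime, and regularity of a homomorphism of noetherian rings is stable under localization of the target, so it suffices to test at the open primes. Fix such a $q$, let $y\in\gtY'$ be the corresponding point and put $x=\gtf(y)\in\gtX'$.

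Since $\{\gtY_i\}$ covers $\gtY$, there is an index $i$ with $y\in\gtY_i=\Spf(B_i)$; then $x\in\gtX_i=\Spf(A_i)$ and $A_i\to B_i$ is regular, hence so is its localization $(A_i)_{p_i}\to(B_i)_{q_i}$. The decisive point is that the completed local rings at a point of the closed fibre are intrinsic to the formal scheme: a formal localization induces an isomorphism on the $\gtm$-adic completions of the local rings at the points lying over its centre, so $\wh{A'_p}\toisom\wh{(A_i)_{p_i}}$ and $\wh{B'_q}\toisom\wh{(B_i)_{q_i}}$, compatibly with the homomorphisms induced by $\gtf$. Write $\psi\colon\wh{A'_p}\to\wh{B'_q}$ for this common completed homomorphism.

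The heart of the matter is to deduce that $\psi$ is regular from the regularity of $(A_i)_{p_i}\to(B_i)_{q_i}$, i.e. that the completion of a regular local homomorphism is again regular. Flatness is preserved under completion by the local flatness criterion. The closed fibre of $\psi$ is the $\gtm$-adic completion $\wh F$ of the closed fibre $F=(B_i)_{q_i}/\gtm_{(A_i)_{p_i}}(B_i)_{q_i}$ of the given map. Now $F$ is qe, being a quotient of a localization of the qe ring $B_i$, so the completion homomorphism $F\to\wh F$ is regular; and since $\kappa(x)\to F$ is regular (this is geometric regularity of the closed fibre of the given regular homomorphism), the composite $\kappa(x)\to F\to\wh F$ is regular, that is $\wh F$ is geometrically regular over $\kappa(x)=\kappa(\wh{A'_p})$. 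Thus $\psi$ is a flat local homomorphism with geometrically regular closed fibre between complete (hence qe) local rings, so it is formally smooth by \cite[$\rm 0_{IV}$, 19.7.1, 22.5.8]{ega} and therefore regular by Andr\'e's theorem \cite{And}, exactly as in the proof of Lemma \ref{reglem0}.

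Finally I would descend from $\psi$ back to $A'_p\to B'_q$. The completion homomorphisms $A'_p\to\wh{A'_p}$ and $B'_q\to\wh{B'_q}$ are regular by quasi-excellence, so the composite $A'_p\to\wh{A'_p}\to\wh{B'_q}$ (the second arrow being $\psi$) is regular; but this composite equals $A'_p\to B'_q\to\wh{B'_q}$, and since $B'_q\to\wh{B'_q}$ is regular, Lemma \ref{complem}(ii) lets me cancel it and conclude that $A'_p\to B'_q$ is regular. As $q$ was an arbitrary open prime, $A'\to B'$ is regular. \textbf{The main obstacle} is precisely the change of affine chart on the \emph{source}: passing from $A_i$ to $A'$ is \emph{not} a regular morphism applied \emph{after} the map under study, so Lemma \ref{complem}(ii) cannot be invoked to cancel it directly (regularity, and already flatness, does not descend along a faithfully flat extension of the source). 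Passing to the completed local rings, where the competing source charts become canonically identified, is exactly what circumvents this difficulty, at the cost of the completion-invariance argument above.
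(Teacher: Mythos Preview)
Your proof is correct, but the paper takes a different and more economical route. The paper proceeds in two steps: (1) regularity survives principal formal localization, i.e.\ if $A\to B$ is regular then $A_{\{f\}}\to B_{\{g\}}$ is regular, which is an immediate consequence of Corollary~\ref{regcor} applied to the algebraic localization $A_f\to B_g$; (2) if $\gtX=\Spf(A)$, $\gtY=\Spf(B)$ are covered by principal formal localizations $A_i=A_{\{f_i\}}$, $B_i=B_{\{g_i\}}$ with each $A_i\to B_i$ regular, then $A\to B$ is regular. For (2) the paper precomposes with the regular maps $A\to A_i$ to obtain $A\to B_i$ regular; if $A\to B$ failed to be regular at an open prime $x$, then Lemma~\ref{complem}(i) applied to the regular morphism $\Spec(B_i)\to\Spec(B)$ would force $A\to B_i$ to be non-regular at a preimage of $x$, a contradiction. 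The general statement then follows by refining the given charts to common principal localizations and combining the two steps.

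The decisive difference is that the paper never changes the source chart directly: by \emph{precomposing} with $A\to A_i$ it absorbs the source change into a composition of regular maps, so all the cancellation happens on the target side via Lemma~\ref{complem}(i). This completely sidesteps the obstacle you correctly identified, without descending to completed local rings. Your approach works too, but the price is the extra lemma that the $\gtm$-adic completion of a regular local homomorphism of qe rings is again regular (your Step~4), together with the intrinsicness of completed stalks under change of affine chart (Step~3). These are true but cost more than the paper's argument; on the other hand the ``completion of regular is regular'' statement is of independent interest and your proof makes it explicit.
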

\begin{proof}
First we prove that regularity survives formal localizations. Namely, assume that $\gtY=\Spf(B)$, $\gtX=\Spf(A)$, the homomorphism $A\to B$ is regular, $A'=A_{\{f\}}$ and $B'=B_{\{g\}}$. Obviously, $A'\to B'$ is the completion of a regular homomorphism $A_f\to B_g$ which is a localization of $A\to B$. Hence $A'\to B'$ is a regular homomorphism by Corollary \ref{regcor}.

To complete the proof it now suffices to prove the following claim. Assume that $\gtY=\Spf(B)$, $\gtX=\Spf(A)$, $\gtX=\cup\gtX_i=\Spf(A_i)$ and $\gtY=\cup\gtY_i=\Spf(B_i)$ such that $A_i=A_{\{f_i\}}$, $B_i=B_{\{g_i\}}$, $\gtf(\gtY_i)\subset\gtX_i$ and the homomorphisms $A_i\to B_i$ are regular. In particular, the compositions $A\to A_i\to B_i$ are regular. Then we claim that the homomorphism $A\to B$ is regular. Assume to the contrary that $f\:\Spec(B)\to\Spec(A)$ is not regular. Since any point of $\Spec(B)$ specializes to a point of $\Spf(B)$, there exists a point $x\in\gtY\subset\Spec(B)$ such that the homomorphism $f$ is not regular at $x$. Since the morphisms $\Spec(B_i)\to\Spec(B)$ are regular, the composed morphisms $\Spec(B_i)\to\Spec(B)\to\Spec(A)$ are not regular at the preimage of $x$ by Lemma \ref{complem}(i). However, $x$ has a non-empty preimage in some $\Spec(B_i)$ because $\gtY_i$'s cover $\gtY$. This contradicts the regularity of $A\to B_i$, and therefore $f$ is regular.
\end{proof}

\section{Extending $\calF_\Var$ to schemes with small singular locus}\label{locsec}
Loosely speaking, the aim of \S\ref{locsec} is to extend the functor $\calF_\Var$ to generically reduced qe schemes over $\bfQ$ whose singular locus is sufficiently small. More precisely, we will extend $\calF_\Var$ to pairs $(X,Z)$ where $X$ is a generically reduced noetherian qe scheme over $\bfQ$ and $Z\into X$ is a Cartier divisor isomorphic to a disjoint union of varieties and containing $X_\sing$. This is an intermediate result towards our proof of the main Theorem \ref{desth}, so we do not pursue any generality in \S\ref{locsec}. The question of extending $\calF_\Var$ to wider classes of schemes was discussed in Remark \ref{difrem}. The construction of $\calF_\Var(X,Z)$ goes by completing $X$ along $Z$ and algebraizing the obtained formal variety, and the main difficulty is to prove that this construction is independent of the algebraization.


\subsection{Extending $\calF_\Var$ to formal varieties}\label{formsec}

\subsubsection{Formal varieties}
A noetherian formal scheme $\gtX$ is called a {\em formal variety} if its closed fiber $\gtX_s$ is a variety.

\begin{rem}
(i) Formal varieties are called special formal schemes in \cite[\S3.2]{temdes}.

(ii) It is easy to prove that an equicharacteristic $\gtX$ is a formal variety if and only if locally it is of the form $\Spf(k[T_1\. T_n][[S_1\. S_m]]/I)$ where $k$ is any field of definition of $\gtX_s$; see \cite[3.2.1]{temdes} for a proof. Note that the latter formal scheme is of finite type over $\Spf(k[[S_1\. S_m]])$ because of the isomorphism $$k[T_1\. T_n][[S_1\. S_m]]\toisom k[[S_1\. S_m]]\{T_1\. T_n\}$$
\end{rem}

\subsubsection{Rig-smoothness}\label{rigsmooth}
Let $A$ be an adic ring and let $B=A\{T_1\. T_n\}/I$ be topologically finitely generated over $A$. Following \cite{Elk}, one defines a Jacobian ideal $H\subseteq B$ that depends on some choices (see also \cite[\S3.3]{temdes}) and shows that its radical $H_{B/A}=\sqrt{H}$ depends only on $A$ and $B$. Moreover, the construction of $H_{B/A}$ is compatible with formal localizations, hence one obtains a reduced Jacobian ideal $H_{\gtY/\gtX}$ for any finite type morphism $f\:\gtY\to\gtX$. One can view the corresponding closed subscheme of $\gtY$ as the non-smoothness locus of $f$. In particular, $f$ is smooth if and only if $H_{\gtY/\gtX}=\calO_\gtY$. An arbitrary morphism $f\:\gtY\to\gtX$ is {\em rig-smooth} if it is of finite type and $H_{\gtY/\gtX}$ is open. The following remark will not be used, so the reader not familiar with non-archimedean geometry can safely skip it.

\begin{rem}
(i) Intuitively, rig-smoothness means that the "generic fiber" of $f$ is smooth.

(ii) If $\gtX=\Spf(k[[\pi]])$ then the generic fiber $f_\eta\:\gtY_\eta\to\gtX_\eta$ can be defined in the categories of rigid, analytic or adic spaces. Rig-smoothness of $f$ is equivalent to smoothness of $f_\eta^\rig$ by \cite[3.3.2]{temdes}, and by comparison of smoothness in rigid and adic categories, this also equivalent to smoothness of $f_\eta^\ad$. Smoothness for analytic spaces is more restrictive since one requires the boundary to be empty. Morphisms corresponding to smooth morphisms of rigid spaces are called quasi-smooth (or, sometimes, rig-smooth), thus $f$ is rig-smooth if and only if $f_\eta^\an$ is quasi-smooth.
\end{rem}

\subsubsection{Algebraization of formal varieties}
A formal variety is called {\em (locally) algebraizable} if (locally) it is isomorphic to a formal completion of a variety.

\begin{rem}
(i) It is well known that formal singularities can be non-algebraizable. So, a general formal variety does not have to be locally algebraizable.

(ii) The main algebraization tool is \cite[Th. 7]{Elk} by Ren\'ee Elkik. This theorem implies that if $A$ possesses a principal ideal of definition and $\Spf(B)\to\Spf(A)$ is a rig-smooth morphism then $\Spf(B)$ is
$A$-algebraizable.

(iii) It is an interesting question if the assumption about a principal ideal of definition can be weakened. Because of this assumption we have to introduce the class of principal formal varieties below.
\end{rem}

\subsubsection{Locally principal formal varieties}
By a {\em locally principal formal variety} we mean a pair $(\gtX,\gtI)$ consisting of a formal variety $\gtX$ and an invertible ideal of definition $\gtI\subset\calO_\gtX$, and we say that $(\gtX,\gtI)$ is {\em principal} if $\gtI$ is isomorphic to $\calO_\gtX$. By a (regular) morphism of locally principal formal varieties $f\:(\gtX',\gtI')\to(\gtX,\gtI)$ we mean a (regular) morphism $f\:\gtX'\to\gtX$ such that $\gtI'=\gtI\calO_{\gtX'}$.

\begin{rem}
(i) It is easy to see that an affine equicharacteristic formal scheme $\gtX$ with an ideal $\gtI$ is a principal formal variety if and only if it is of finite type over $(\gtS=\Spf(k[[\pi]]),(\pi))$, where $k$ is a field of definition of $\gtX_s$ and $\pi$ is a generator of the ideal of definition (we refer to \cite[3.2.3]{temdes} for details).

(ii) In the zero characteristic case, a principal formal variety $\gtX$ is rig-regular if and only if it is rig-smooth over an $\gtS$ as in (i).

(iii) It follows from (ii) and the theorem of Elkik that any affine rig-regular principal formal variety of characteristic zero is algebraizable; see \cite[3.3.1]{temdes} for details.
\end{rem}

\subsubsection{Desingularization of rig-regular locally principal formal varieties}\label{formalgsec}
Let $\hatVar_{p=0}$ denote the category of finite disjoint unions of rig-regular locally principal formal varieties of characteristic zero with regular morphisms.

\begin{theor}\label{formdesth}
There exists unique up to unique isomorphism desingularization functor $\hatcalF_\Var$ on $\hatVar_{p=0}$ such that $\hatcalF_\Var$ is compatible with $\calF_\Var$ under formal completions. Moreover, $\hatcalF_\Var$ is strong if and only if $\calF_\Var$ is strong.
\end{theor}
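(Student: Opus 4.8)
The plan is to construct $\hatcalF_\Var$ by algebraizing, applying $\calF_\Var$, and completing, deducing each assertion from the corresponding property of $\calF_\Var$ on varieties. By the gluing argument of Lemma~\ref{afflem}, which transfers verbatim to $\hatVar_{p=0}$ (using disjoint unions and the uniqueness of isomorphisms of blow up sequences), it suffices to build $\hatcalF_\Var$ on affine objects, functorially in regular morphisms between them, and then glue. So let $(\gtX,\gtI)$ be an affine rig-regular principal formal variety of characteristic zero, which we may assume generically reduced. By Elkik's theorem, recalled above, $\gtX$ is algebraizable: $\gtX\toisom\hatX$ for a generically reduced variety $X$. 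I would set $\hatcalF_\Var(\gtX):=\wh{\calF_\Var(X)}$, the formal completion of the variety desingularization provided by Theorem~\ref{vardesth}. Since formal completion sends blow ups to formal blow ups, this is a formal blow up sequence; it is $\gtX_\sing$-supported because $\gtX_\sing$ is the completion of $X_\sing$ and $\calF_\Var(X)$ is $X_\sing$-supported, and its source is regular because $\calF_\Var(X)$ has regular source and completion morphisms are regular (so regularity is preserved by \S\ref{compsec}). Thus $\hatcalF_\Var(\gtX)$ is a desingularization of $\gtX$, compatible with $\calF_\Var$ under completion by construction.

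The essential point is well-definedness, for the algebraization $X$ is highly non-canonical --- even its ground field is not determined. I must show that $\wh{\calF_\Var(X)}$ is independent of $X$, and this is precisely the intrinsic description worked out in \S\ref{intrsec}: the completed blow up sequence depends only on a sufficiently thick nilpotent neighborhood $X_n\into\gtX$ of the closed fiber $\gtX_s$, hence only on $\gtX$. The mechanism I would exploit is that the invariant governing $\calF_\Var$ at a point, together with all successive centers and their strict transforms, is read off from a finite jet of the local ring. Two algebraizations $X,X'$ of the same $\gtX$ have canonically isomorphic finite-order neighborhoods $X_n\toisom X'_n$ at each point of $\gtX_s$, so the invariants agree and $\calF_\Var(X)$, $\calF_\Var(X')$ become equal after completion. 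Granting this, $\hatcalF_\Var(\gtX)$ is well defined.

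For functoriality, let $\gtf:\gtX'\to\gtX$ be a regular morphism of affine objects. The difficulty is that $\gtf$ need not lift to a regular morphism of algebraizations, so functoriality of $\calF_\Var$ cannot be applied directly; instead I would route the argument through the finite-level data. The morphism $\gtf$ induces compatible morphisms $X'_n\to X_n$ of thick neighborhoods, which are base changes of $\gtf$ and hence regular, and since $\hatcalF_\Var$ is determined by this data, the required compatibility of $\hatcalF_\Var(\gtX')$ with $\hatcalF_\Var(\gtX)\times_\gtX\gtX'$ (and their coincidence when $\gtf$ is surjective) follows by transporting the functoriality of $\calF_\Var$ through the finite jets. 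Uniqueness is then formal: any functor compatible with $\calF_\Var$ under completion agrees with $\wh{\calF_\Var(\cdot)}$ on algebraizable objects, every affine object is algebraizable, and Lemma~\ref{afflem} forces agreement globally (up to the unique isomorphism of blow up sequences). Finally, a center $V$ of $\calF_\Var(X)$ is $X_\sing$-supported, hence lies in the completion locus, so $V$ is regular if and only if its completion $\wh V$ is (again by \S\ref{compsec}); therefore $\hatcalF_\Var$ has regular centers exactly when $\calF_\Var$ does, giving the strong-iff-strong statement.

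The one substantive obstacle is the independence of the algebraization in the second paragraph: everything else is transport of structure through formal completion, whereas proving that $\wh{\calF_\Var(X)}$ is intrinsic to $\gtX$ --- equivalently, that the desingularization only reads a finite jet of the scheme --- is the heart of the matter and is exactly what \S\ref{intrsec} is devoted to.
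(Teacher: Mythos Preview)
Your proposal is correct and follows essentially the same route as the paper: define $\hatcalF_\Var$ on affine principal objects by algebraizing via Elkik, applying $\calF_\Var$, and completing; invoke the results of \S\ref{intrsec} (specifically Proposition~\ref{Elkprop}) to see that the outcome depends only on a thick fiber $(X_n,\calI_n)$ and hence only on $(\gtX,\gtI)$; deduce functoriality from the same Proposition because a regular morphism $\gtf$ induces regular morphisms on the fibers; and globalize by the gluing argument of \S\S\ref{affsec}--\ref{invsec}. Your added remarks on uniqueness and on the strong-iff-strong clause are correct and simply make explicit what the paper leaves implicit.
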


The theorem will be proved only in \S\ref{formdesthsec}. Since formal varieties from the theorem are locally algebraizable, the uniqueness is obvious. To prove existence we, at the very least, should show that if such an $\gtX$ admits two algebraizations, say $\gtX\toisom\hatX$ and $\gtX\toisom\hatY$, then the completions of the blow up sequences $\calF_\Var(X)$ and $\calF_\Var(Y)$ give rise to the same formal blow up sequence of $\gtX$. We will solve this problem in \S\ref{intrsec} by showing that the desingularization of $X$ and $Y$ is canonically determined already by a sufficiently thick infinitesimal neighborhood of $\gtX_s$. The main tool will be results of Elkik and Gabber-Ramero that we will recall in \S\ref{egrsec}.

\subsection{Scheme-theoretic singular locus}\label{egrsec}
Consider a morphism $f\:X=\Spec(B)\to S=\Spec(A)$ and let $f_\sing\subset X$ be its singular locus. It is a natural question if one can meaningfully extend this set-theoretic notion to a scheme-theoretic one.

\subsubsection{Jacobian ideals}\label{jacobsec}
A rough version of such a notion was introduced in \cite{Elk}. Namely, one considers a closed immersion $X\into W=\bfA^N_S$, chooses generators of the ideal $J\subseteq F=A[t_1\. t_N]$ defining $X$ in $W$, and assigns to this datum a Jacobian ideal $H_F$ on $W$ whose cosupport is $f_\sing$ (it is denoted $H_B$ in \cite[\S0.2]{Elk}). In particular, this ideal may be used to define singular locus of morphisms of formal schemes (see \S\ref{rigsmooth}), and it plays a crucial role in the proof of the famous Popescu's theorem, see \cite{Po}. The disadvantages of this definition is that it depends on the choices and $V(H_F)$ does not have to be a subscheme of $X$. This indicates that $V(H_F)$ gives an upper bound on a scheme-theoretic singular locus, and indeed there is a more precise definition by Gabber-Ramero.

\subsubsection{Gabber-Ramero ideal}\label{grideal}
Gabber-Ramero introduced in \cite[5.4.1]{GR} an ideal $$\bfH_F=\Ann_F\Ext^1_B(\bfL_{B/A},J/J^2)$$ where $\bfL_{B/A}$ is the cotangent complex of Illusie. By \cite[5.4.2(iii)]{GR} this ideal annihilates $\Ext^1_B(\bfL_{B/A},N)$ for any $B$-module $N$. Since $J\subseteq\bfH_F$, we can view $V(\bfH_F)$ as a closed subscheme of $X$ given by the ideal $\bfH_{B/A}=\bfH_FB$. The latter is the biggest ideal of $B$ that annihilates any module of the form $\Ext^1_B(\bfL_{B/A},N)$, hence it depends only on the homomorphism $A\to B$. By \cite[5.4.2]{GR}, the vanishing locus of $\bfH_{B/A}$ is the singular locus of $f$, in particular, $B$ is smooth over $A$ if and only if $\bfH_{B/A}=B$. Thus, it is natural now to set $f_\sing=V(\bfH_{B/A})$, as a subscheme of $X$. Moreover, it is shown in \cite[5.4.6]{GR} that $V(\bfH_F)$ is a subscheme of $V(H_F)$ (regardless to the choices in the definition of $H_F$), supporting the intuition that $V(H_F)$ is an upper bound on $f_\sing$. We will use Gabber-Ramero ideals in the sequel, though we will indicate in comments how one could use only the results of \cite{Elk} instead. First, let us list basic compatibility properties of $\bfH_{B/A}$.

\begin{prop}\label{grprop}
Let $f:A\to B$ be a ring homomorphism.

(i) Gabber-Ramero ideal can only increase under base changes. Namely, if $A\to A'$ is a homomorphism and $B'=B\otimes_AA'$ then $\bfH_{B/A}B'\subseteq\bfH_{B'/A'}$.

(ii) Gabber-Ramero ideals satisfy the following transitive domination property: if $g\:B\to C$ is another homomorphism then $\bfH_{B/A}\bfH_{C/B}\subseteq\bfH_{C/A}$. In particular, if $g$ is smooth (resp. $f$ is smooth) then $\bfH_{B/A}C\subseteq\bfH_{C/A}$ (resp. $\bfH_{C/B}\subseteq\bfH_{C/A}$).
\end{prop}
\begin{proof}
Claim (i) is precisely \cite[5.4.2(i)(a)]{GR}. To prove (ii) we recall that by \cite[II.2.1.2]{Ill} one associates to $A\to B\to C$ an exact transitivity triangle $$\bfL_{C/B}[-1]\to \bfL_{B/A}\otimes_B^\bfL C\to \bfL_{C/A}\to \bfL_{C/B}$$ In particular, for any $C$-module $N$ we obtain an exact sequence $$\Ext^1_C(\bfL_{C/B},N)\to\Ext^1_C(\bfL_{C/A},N)\to\Ext^1_C(\bfL_{B/A}\otimes_B^\bfL C,N)$$
It suffices to prove that $\bfH_{B/A}\bfH_{C/B}$ annihilates the middle module in this sequence. Since $\bfH_{C/B}$ annihilates the left one, we should only check that $\bfH_{B/A}$ annihilates the module on the right. But this follows from the isomorphism $$\Ext^1_C(\bfL_{B/A}\otimes_B^\bfL C,N)\toisom\Ext^1_B(\bfL_{B/A},\RHom_C(C,N))\toisom\Ext^1_B(\bfL_{B/A},N)$$
where the first map is an adjunction isomorphism.
\end{proof}

\subsubsection{Principal affine pairs}
By a {\em principal affine pair} over a field $k$ we mean a pair $(X,\calI)$ consisting of an affine $k$-variety $X=\Spec(A)$ with a principal invertible ideal $\calI\subset\calO_X$ corresponding to $I\subset A$. Thus, $I=(\pi)$ for a non-zero divisor $\pi$. Sometimes, we will denote such pair as $(A,I)$. The support of $\calI$ will be called the {\em closed fiber} and we will denote it $X_s$. The closed fiber underlies schemes $X_n=\Spec(A/I^n)$, and the pair $(X_n,\calI_n)$, where $\calI_n=\calI\calO_{X_n}$, will be called the {\em $n$-th fiber} of $(X,\calI)$. The henselization and the formal completion of the pair will be denoted $(X^h,\calI^h)$ and $(\hatX,\hatcalI)$, respectively. We have natural regular morphisms $\hatX\to X^h\to X$ and closed immersions of $X_n$ into the above schemes which are compatible with the ideals and induce isomorphisms on the $m$-th fibers for $m\le n$.

\subsubsection{Morphisms of pairs}
A morphism between principal pairs $f\:(X',\calI')\to (X,\calI)$ is a morphism $h\:X\to X'$ such that $\calI'=\calI\calO_{X'}$. If $X$, $X'$ and $h$ are defined over $k$ then we say that $f$ is a {\em $k$-morphism}. Also, we define a morphism $\of\:(\oX',\ocalI')\to(\oX,\ocalI)$ between $n$-th fibers, henselizations or completions as a morphism $\oh\:\oX\to\oX'$ that respects the ideals. Note that we do not impose any condition on the original pairs in this definition. A morphism $\of$ as above is said to be {\em regular, isomorphism}, etc. if $\oh$ is so. Finally, to any morphism $f$ we obviously associate $n$-th fibers, henselization and completion which will be denoted $f_n$, $f^h$ and $\hatf$, and similarly to any morphism $\of$ between completions, henselizations or $n$-th fibers we can associate a morphism $\of_m$ between the $m$-th fibers, where $m\le n$.

\subsubsection{Conductor}\label{condsec}
Assume now that $(X,\calI)$ is a principal pair and $f\:Y=\Spec(B)\to X$ a finite type morphism. Then we define the {\em conductor} of $f$ to be the minimal number $r$ such that $I^r\subseteq\bfH_{B/A}$. If no such number exists then the conductor is infinite. Note that the conductor is finite if and only if $f$ is smooth over the complement of $V(I)$.

\begin{rem}\label{condrem}
(i) It follows from Proposition \ref{grprop}(i) that the conductor does not increase under base changes.

(ii) It follows from Proposition \ref{grprop}(ii) that the conductor does not increase when one replaces $B$ with a smooth $B$-algebra $C$.

(iii) So far, we did not use that $I$ is principal and invertible. Nevertheless, it is not so clear if the notion of conductor makes too much sense in general, so we prefer to impose the restrictions on $I$ from the beginning.
\end{rem}

\subsubsection{The lifting theorem}
The following lifting result will be a critical tool in the sequel. In some sense it indicates that, indeed, the "non-smoothness of $f$ is bounded by $t^r$".

\begin{theor}[Gabber-Ramero]\label{elkth}
Assume that $f:(Y,\calJ)\to(X,\calI)$ is a morphism of principal pairs of conductor $r$. If the pair $(X,\calI)$ is henselian, $n>r$ a number, and $\ve_n\:X_n\to Y_n$ is a section of $f_n$ that can be lifted to a section $\ve'_{n+r}$ of $f_{n+r}$, then $\ve_n$ can also be lifted to a section $\ve\:X\to Y$ of $f$.
\end{theor}
\begin{proof}
This is, actually, the claim of \cite[5.4.13]{GR} when one takes trivial $I$ in the loc.cit.
\end{proof}

\begin{rem}
(i) Note that $\ve$ does not have to agree with $\ve'_{n+r}$ on the $m$-th fibers for $m>n$.

(ii) The above theorem of Gabber-Ramero improves an analogous result of Elkik (see \cite[Th. 2 and Cor. 1]{Elk}), in which a weaker conductor is defined through Jacobian ideals. Note that the bounds in Elkik's theorem are implicit because $\calI$ can be any finitely generated ideal in \cite{Elk}. Tracing the proof, one may check that when $\calI$ is principal and invertible the bound is essentially as we stated (in terms of Jacobian ideals), e.g. see \cite[Lemma 1]{Elk} and the proof of \cite[Th. 1]{Elk}.
\end{rem}

\subsection{Intrinsic dependence of desingularization on the singular locus}\label{intrsec}
Intuitively, it is natural to expect that for a scheme $X$ all information about its singularities, including the desingularization information, is contained in a "sufficiently thick" closed subscheme $Y$ with $|Y|=X_\sing$. It is a very interesting and difficult question how to define such property of $Y$ rigorously, but we will not study it here in general. We will only consider a very special case of what we call Elkik pair, which suffices to prove Theorem \ref{formdesth}.

\subsubsection{Elkik pairs}\label{elksec}
Until the end of \S\ref{intrsec} we assume that the characteristic is zero. By {\em Elkik pair over $k$} we mean a principal affine pair $(X,\calI)$ over $k$ such that $X_\sing$ is contained in the closed fiber $X_s$. Note that for any generator $\pi$ of $I$ the generic fiber $X_\eta$ of the induced morphism $f\:X\to\bfA^1_k$ is a regular scheme, and hence $f$ is smooth along $X_\eta$ by the characteristic zero assumption. In particular, removing from $X$ few bad fibers we can achieve that $f$ is smooth outside of $X_s$ and then the conductor of $f\:(X,\calI)\to(\bfA^1_k,(\pi))$ is a finite number that we denote $r(X,\pi)$.

\subsubsection{Conductor of Elkik pairs}
Instead of studying the question whether $r(X,\pi)$ depends only on $(X,\calI)$, we define the {\em conductor} $r(X,\calI)$ as the minimal possible value of $r(X,\pi)$, where $\pi$ runs over the set of all generators of $\calI$. Any $\pi$ for which the minimum is achieved will be called a {\em featured} generator.

\subsubsection{Recovery of a henselian Elkik pair from a sufficiently thick fiber}
It is a well known, but difficult to find in the literature, folklore that the results of \cite{Elk} imply that the henselization of an Elkik pair is determined up to an isomorphism by a sufficiently thick fiber. Let us show that, indeed, this is a simple consequence of the lifting theorem. We work with Gabber-Ramero version, but obviously one could use only \cite{Elk}, obtaining slightly worse bounds. In the following proposition we say that a morphism is {\em henselian-smooth} if it is a henselization of a smooth morphism.

\begin{prop}\label{fibprop}
Assume that $(X,\calI)$ and $(X',\calI')$ are Elkik pairs over $k$, $r$ is the conductor of $(X,\calI)$ and $n>r$ is a number. Then for any smooth $k$-morphism $f_n\:(X'_n,\calI'_n)\to(X_n,\calI_n)$ that can be lifted to a smooth morphism $$f'_{n+r}\:(X'_{n+r},\calI'_{n+r})\to(X_{n+r},\calI_{n+r})$$ there also exists a lifting of $f_n$ to a henselian-smooth morphism of henselizations $f\:(X'^h,\calI'^h)\to(X^h,\calI^h)$. Moreover, if $f_n$ (or even $f_0$) is an isomorphism then $f$ is an isomorphism, and so the henselizations are $k$-isomorphic if and only if the $m$-th fibers are $k$-isomorphic for a single $m>2r$. In particular, the fiber $X_{2r+1}$ determines $X^h$ up to an isomorphism.
\end{prop}
\begin{proof}
Fix any featured generator $\pi$ of $\calI$ and denote its images in $\calI^h$ and $\calI_n$ also by $\pi$. Then $X$ and all its derived schemes are provided with compatible morphisms to $S=\Spec(k[\pi])$. Let $\pi'\in \calI'_n$ be the image of $\pi$ under the homomorphism induced by $f_n$, and denote by $\pi'$ any lifting of $\pi'$ to $\calI'$. We view $X'^h$ as an $S$-scheme via the morphism taking $\pi'$ to $\pi$. The morphism $X\times_SX'^h\to X'^h$ is a base change of $X\to S$, hence its conductor is bounded by $r$ by Remark \ref{condrem}(i). By Theorem \ref{elkth} there exists a section $X'^h\to X\times_SX'^h$ which lifts $$(f_n,\Id_{X'_n})\:X'_n\to X_n\times_S X'_n$$ Projecting this section onto $X$ gives a morphism $\phi\:X'^h\to X$ that lifts $f_n$. By the universal property of henselizations, the latter morphism factors through a morphism $f\:X'^h\to X^h$ that lifts $f_n$.

Note that $X'^h=\projlim X'_\alp$ where $X'_\alp\to X'$ are \'etale morphisms inducing isomorphisms on henselizations. By \cite[$\rm IV_4$, 8.13.1]{ega}, $\phi$ factors through a morphism $g\:Y=X'_\alp\to X$ for large enough $\alp$, and clearly $f=g^h$. We claim that the fibers of $g$ are smooth. Indeed, $g_n=\phi_n=f_n$ is smooth, hence $g_0$ is smooth and it suffices to check that $g_m$ is flat for any $m>n$. Let $X=\Spec(A)$ and $Y=\Spec(B)$. By local criterion of flatness \cite[Th. 49, (4)$\implies$(1)]{Mat} with respect to the ideal $\pi A$ it suffices to show that $\pi^mA/\pi^{m+1}A\otimes_{A_0}B_0\toisom\pi^mB/\pi^{m+1}B$. This follows from the fact that, since $A$ and $B$ have no $\pi$-torsion, $\pi^mA/\pi^{m+1}A\toisom A/\pi A=A_0$ and $\pi^mB/\pi^{m+1}B\toisom B_0$. Smoothness of the fibers implies (actually is equivalent to) formal smoothness of the completion homomorphism $\hatA\to\hatB$, and the latter implies that $g$ is smooth along $X_s$. In particular, replacing $Y$ with a small enough neighborhood of the closed fiber we achieve that $g$ is smooth. In particular, $f=g^h$ is henselian-smooth. Finally, if $f_0=g_0$ is an isomorphism then $g$ is strictly \'etale along $X_s$ and hence its henselization $f$ is an isomorphism.
\end{proof}

In the above proof we also showed that the henselian-smooth morphism $f$ is approximated by a smooth morphism $g$ between Elkik pairs. We record this result too for the sake of latter referencing.

\begin{cor}\label{fibcor}
Let $(X,\calI)$, $(X',\calI')$, $r$, and $n$ be as in Proposition \ref{fibprop}. Then there exist a smooth morphism $g\:Y\to X$ and an \'etale morphism $\phi\:Y\to X'$ such that $\phi\times_XX_s$ is an isomorphism (equivalently, $\phi^h\:Y^h\toisom X'^h$ is an isomorphism) and the induced morphism $g_n\circ\phi_n^{-1}\:X'_n\to X_n$ equals to $f_n$.
\end{cor}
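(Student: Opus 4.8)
The plan is to realize the abstract henselian morphism produced by Proposition \ref{fibprop} by an actual span of finite-type schemes, using the description of a henselization as a cofiltered limit of \'etale neighborhoods together with the limit/approximation formalism of \cite[$\rm IV_3$, \S8]{ega}. First I would apply Proposition \ref{fibprop} to obtain a smooth $k$-morphism $g_0\colon X'^h\to X^h$ of pairs lifting $f_{n-r}$; all the remaining work is to descend $g_0$ to finite level. Note that both pairs are affine $k$-varieties, so $X$ and $X'$, and all the \'etale neighborhoods below, are of finite type (hence finitely presented) over $k$.

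Next I would write $X^h=\varprojlim_\mu X_\mu$ and $X'^h=\varprojlim_\lambda X'_\lambda$ as cofiltered limits of their \'etale neighborhoods, keeping only those neighborhoods whose structure maps restrict to isomorphisms on the closed fibers $X_s$ and $X'_s$; such neighborhoods are cofinal, the transition maps are affine, and each $X_\mu$, $X'_\lambda$ is of finite type over $k$. Since $g_0$ is a morphism of pairs it carries closed fibers to closed fibers, so by \cite[$\rm IV_3$, 8.8.2]{ega} the morphism $g_0$ descends to a morphism $g_{\lambda\mu}\colon X'_\lambda\to X_\mu$ for suitable indices, and by \cite[$\rm IV_4$, 17.7.8]{ega} (smoothness is open and limit-preserving) this $g_{\lambda\mu}$ may be taken smooth after enlarging $\lambda$. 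I would then set $X''=X'_\lambda$, let $h\colon X''\to X'$ be the \'etale structure morphism, and let $g\colon X''\xrightarrow{g_{\lambda\mu}}X_\mu\to X$ be the composite with the \'etale morphism $X_\mu\to X$; thus $g$ is smooth as a composite of a smooth and an \'etale morphism, while $h$ is \'etale by construction.

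It remains to check the asserted properties. Because $X''_s\cong X'_s$, the \'etale morphism $h$ induces an isomorphism on henselizations $X''^h\toisom X'^h$, and by formal \'etaleness it restricts to an isomorphism $h_{n-r}\colon X''_{n-r}\toisom X'_{n-r}$ on the infinitesimal $(n-r)$-th fibers. By construction $g$ maps $X''_s$ into $X_s$, so it induces $g_0$ on henselizations under the identifications $X''^h\toisom X'^h$ and $X_\mu^h\toisom X^h$. Since the $(n-r)$-th fiber $X'_{n-r}$ sits as a closed subscheme of $X'^h$ and $g_0$ lifts $f_{n-r}$ by Proposition \ref{fibprop}, restricting the equality $g|_{X''^h}=g_0$ to this closed subscheme yields $g_{n-r}\circ h_{n-r}^{-1}=f_{n-r}$, as required. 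No further refinement of $\lambda$ is needed for this last identity: the $(n-r)$-th fiber already lives at finite level inside the henselization, so the descended $g$ and the limit morphism $g_0$ have the same restriction to it.

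The main obstacle is the simultaneous descent of $g_0$ along the two limit systems while preserving smoothness at a finite stage. One must organize the bookkeeping of \'etale neighborhoods for source and target at once, keep those neighborhoods isomorphic on their closed fibers so that the henselizations (and hence the precise lifting of $f_{n-r}$) are literally preserved, and invoke the correct form of the limit theorems so that it is the genuinely finite-type smoothness of $g_{\lambda\mu}$ that gets descended, rather than the merely regular, non-finite-type morphism $X'^h\to X$.
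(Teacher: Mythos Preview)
Your proposal is correct and follows essentially the same route as the paper: apply Proposition~\ref{fibprop} to get a smooth morphism $g_0\colon X'^h\to X^h$ lifting $f_{n-r}$, write $X'^h$ as a filtered limit of \'etale neighborhoods $X'_\alpha$ of $X'$, and descend the composite $X'^h\to X^h\to X$ to some $X'_\alpha\to X$ using the approximation results of \cite[$\rm IV$, \S8]{ega}; then $X''=X'_\alpha$ does the job. The only difference is that you also express $X^h$ as a limit $\varprojlim X_\mu$ and route through an intermediate $X_\mu$, whereas the paper composes directly with the canonical map $X^h\to X$ (the target $X$ is already a finite-type $k$-scheme, so no approximation is needed on that side). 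This extra step is harmless but unnecessary, and it slightly obscures the verification of the fiber identity: once you know that $X'^h\to X'_\alpha\to X$ literally equals the composite $X'^h\xrightarrow{g_0}X^h\to X$, restricting to the closed subscheme $X'_{n-r}\hookrightarrow X'^h$ immediately gives $g_{n-r}\circ h_{n-r}^{-1}=f_{n-r}$, with no need to argue that the induced map on henselizations recovers $g_0$ exactly.
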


\subsubsection{Naive restriction of a desingularization on an Elkik fiber}\label{naiverestrsec}
All schemes and morphisms in \S\ref{naiverestrsec} are defined over a field $k$. Let $(X,\calI)$ be an Elkik pair over $k$, and assume that $f\:X^{(p)}\longto X^{(0)}$ is a (strong) desingularization of $X=X^{(0)}$. In particular all centers $V^{(i)}\into X^{(i)}$ of $f$ sit over $X_s$, and hence we can choose $l$ such that each $V^{(i)}$ is contained in the $l$-th fiber $X^{(i)}_l$. To simplify the notation we set $X'=X^{(1)}$, $V'=V^{(1)}$ and $V=V^{(0)}$. Set $r=\max_{0\le i\le p}r(X^{(i)},\calI^{(i)})$. For any number we define the {\em naive restriction} of $f$ onto the $n$-th fiber as the sequence $f_n\:X^{(p)}_n\longto X_n^{(0)}$ provided with the closed subschemes $V^{(i)}\into X_n^{(i)}$. A serious disadvantage of the naive restriction is that the situation is not fully controlled by $X_n$ for the following two reasons. First, although $X$ is determined by $X_n$ up to an isomorphism whenever $n>2r$, not any automorphism of $X_n$ has to lift to $X$. However, any automorphism of $X_{n-r}$ does lift, so if $V\subseteq X_{n-r}$ then at least $X'_{n-r}$ is determined up to an $X_{n-r}$-automorphism. Second, there might be non-trivial $X_{n-r}$-automorphisms $\sigma$ of $X'_{n-r}$ even though each $X$-automorphism of $X'$ is trivial. So, the subscheme $V\into X$ completely determines the morphism $X'\to X$, but given only $X_n$ and $V\into X_{n-r}$ we cannot reconstruct the morphism $X'_{n-r}\to X_{n-r}$. Fortunately, it turns out that any such $\sigma$ is the identity modulo large enough power of $\calI$, so sufficiently thick fibers $X'_m$ are determined uniquely.

\begin{lem}\label{automlem}
Keep the above notation, and assume that $n>\max(r,l)+r$. Then the $X_n$-scheme $X'_{n-l-r}$ is determined by $X_n$ and $V$ up to a unique $X_n$-isomorphism.
\end{lem}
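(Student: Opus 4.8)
The plan is to reduce the statement to a comparison of henselizations, apply Proposition \ref{fibprop}, and then rigidify the resulting isomorphism on a thick enough fiber by a direct $\calI$-adic order estimate on the blow up charts. Throughout set $s=n-r$, so that $s-l=n-l-r$ and, by the hypothesis $n>l+r$, we have $l<s$. First I would pass to the henselization $(X^h,\calI^h)$. Since $A\to A^h$ is regular, the morphism $X^h\to X$ is flat, so by the flat base change of \S\ref{flatsec} the blow up commutes with it: $\Bl_{V^h}(X^h)\toisom X'\times_XX^h$, where $V^h=V\times_XX^h$. Moreover $A/I^m\toisom A^h/I^mA^h$ for every $m$, so $X_m$ is the same formed in $X$ or in $X^h$, whence $X'_m\toisom\Bl_{V^h}(X^h)\times_{X^h}X_m$. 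As $V\into X_l$ is already a closed subscheme of $X^h$ defined inside $X_l\subset X_n$, this shows $X'_m$ depends only on $(X^h,\calI^h)$ together with $V$. Thus it suffices to treat two Elkik pairs $(X,\calI)$ and $(Y,\calJ)$ with centers $V,W$ and an isomorphism $\phi:X_n\toisom Y_n$ carrying $V$ to $W$, and to produce a \emph{unique} isomorphism $X'_{s-l}\toisom Y'_{s-l}$ over $\phi_{s-l}$.

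For existence I would invoke Proposition \ref{fibprop}: since $n\ge n_0$, the isomorphism $\phi$ lifts to an isomorphism of henselizations $g:X^h\toisom Y^h$ inducing $\phi_{n-r}=\phi_s$ on $s$-th fibers. Because $l<s$, the center $V$ sits inside $X_s$, where $g$ agrees with $\phi$; hence $g(V^h)=W^h$, and $g$ induces an isomorphism $\Bl_{V^h}(X^h)\toisom\Bl_{W^h}(Y^h)$. Restricting this over $X_{s-l}\toisom Y_{s-l}$ produces the required isomorphism $X'_{s-l}\toisom Y'_{s-l}$ lifting $\phi_{s-l}$.

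The main obstacle is uniqueness, which is exactly the assertion that the ambiguity of the lift dies on sufficiently thick fibers. Two lifts $g,g'$ differ by an automorphism $\sigma=g^{-1}g'$ of $(X^h,\calI^h)$ fixing $V^h$ and satisfying $\sigma\equiv\id$ modulo $\calI^s$, and I must show that the induced automorphism $\sigma'$ of $X'=\Bl_{V^h}(X^h)$ is the identity on $X'_{s-l}$. I would verify this on a chart $\calO=A[J/g_0]$ of the blow up, where $J$ is the ideal of $V$ and $g_0\in J$; here $J\calO=g_0\calO$, and $V\into X_l$ gives $I^l\subseteq J$, hence $I^l\calO\subseteq g_0\calO$. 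Writing $\sigma(x)=x+\delta_x$ with $\delta_x\in I^s$ for $x\in A$, and using $\sigma(J)=J$ to write $\sigma(g_0)=g_0w$ with $w\equiv1$ modulo $I^{s-l}\calO$, a short manipulation of $\sigma'(f/g_0)-f/g_0$ shows that every generator $f/g_0$ of $\calO$ is moved by an element of $I^{s-l}\calO$; the crux is that the single factor $g_0^{-1}$ appearing in the blow up coordinates lowers the $\calI$-adic order by at most $l$, precisely because $I^l\subseteq J$. I expect this order bookkeeping — that blowing up a center contained in the $l$-th fiber costs exactly $l$ levels of infinitesimal agreement — to be the delicate point.

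Finally, since $\sigma'\equiv\id$ modulo $I^{s-l}\calO$, the automorphism $\sigma'$ is the identity on $X'_{s-l}=X'_{n-l-r}$. Therefore the two lifts $g,g'$ induce the \emph{same} isomorphism on $(n-l-r)$-th fibers, which yields both the existence and the uniqueness of the comparison isomorphism. Consequently $X'_{n-l-r}$ is canonically determined by $X_n$ and $V$ as an $X_n$-scheme, up to a unique $X_n$-isomorphism.
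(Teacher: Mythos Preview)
Your proposal is correct and follows essentially the same route as the paper: use Proposition \ref{fibprop} to lift the identification of $n$-th fibers to an isomorphism of henselizations/completions, transport the blow up along this lift, and then kill the ambiguity in the lift by the chart estimate showing that an automorphism congruent to $\id$ modulo $\calI^m$ induces one congruent to $\id$ modulo $\calI^{m-l}$ on the blow up. Two small points worth tightening in your write-up: first, the paper deliberately passes to formal completions for the chart computation because blow ups of henselian pairs are not set up in the paper, so if you stay henselian you should say a word about why the ordinary blow up charts $A^h[J^h/g_0]$ suffice; second, your automorphism $\sigma'$ need not preserve the individual chart $\Spec(A^h[J^h/g_0])$ but rather sends it to $\Spec(A^h[J^h/\sigma(g_0)])$, so the difference $\sigma'(f/g_0)-f/g_0$ should be computed on the overlap chart for $g_0\sigma(g_0)$, exactly as the paper does---your estimate $\sigma(g_0)=g_0w$ with $w\equiv 1$ mod $I^{s-l}$ is precisely what makes this overlap coincide with the original chart modulo $I^{s-l}$.
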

\begin{proof}
Let us explain first how one can non-canonically construct $X'_{n-l-r}$ from $X_n$ and $V$. Find any realization of $(X_n,\calI_n)$ as the $n$-th fiber of an Elkik pair $(Y,\calI')$. The original $(X,\calI)$ is a possible choice, but since we are proving intrinsic dependency on $X_n$ there are other equally good alternatives. Set $Y'=\Bl_V(Y)$ and consider the fiber $Y'_{n-l-r}$. To prove the lemma we should show that the above a priori non-canonical construction is actually canonical, and we will do that by establishing a canonical $X_n$-isomorphism $f'_{n-l-r}\:Y'_{n-l-r}\toisom X'_{n-l-r}$. By Proposition \ref{fibprop} there exists an isomorphism $f^h\:Y^h\toisom X^h$ which induces identity on the $(n-r)$-th fibers. In the sequel it will be more convenient to work with the formal completion $\hatf\:\hatY\toisom\hatX$ and the completed blow up $\hatY'\to\hatY$ (the reason for switching to formal schemes is that the theory of henselian schemes, their blow ups, etc., was not developed in the paper). Since $V\into X_{n-r}$, $\hatf$ induces an isomorphism $\hatf'\:\hatY'\toisom\hatX'$ whose $(n-l-r)$-th fiber is an isomorphism $f'_{n-l-r}$. It remains to prove uniqueness of $f'_{n-l-r}$, and this follows from the following claim by taking $m=n-r$.

{\em Assume that $\phi$ is an automorphism of $\hatY$ that preserves $V$ and induces identity on $Y_m$. Then the induced automorphism $\phi'$ of $\hatY'$ induces identity on $Y_{m-l}$.} The latter claim reduces to a simple computation on charts of $\hatY'$. Let $\hatY=\Spf(A)$ and $\hatV=V(J)$ for an ideal $J\subset A$ containing $\pi^l$, where $\pi$ is a generator of $\calI$. By \S\ref{fchartsec}, $\hatY'$ is covered by charts $Z_g=\Spf(A\{J/g\})$ with $g\in J$. Note that $\phi'$ moves $Z_g$ to $Z_{\phi(g)}$ and takes a function $f/g\in A\{J/g\}$ to $\phi(f)/\phi(g)\in A\{J/\phi(g)\}$. In the intersection $Z_{g\phi(g)}=\Spf(A\{J^2/g\phi(g)\})$ we have that
$$\frac{f}{g}-\phi\left(\frac fg\right)=\frac{f}{g}-\frac {f+\pi^ma}{g+\pi^mb}=\pi^{m-l}\cdot\frac{\pi^l}{g}\cdot\frac{bf-ag}{\phi(g)}\in\pi^{m-l}A\{J^2/g\phi(g)\}$$
This proves that $\phi'_{m-l}$ acts trivially on $(Z_{g\phi(g)})_{m-l}$. Also, the same argument shows that the open immersions $(Z_{g\phi(g)})_{m-l}\into (Z_g)_{m-l}$ and $(Z_{g\phi(g)})_{m-l}\into (Z_{\phi(g)})_{m-l}$ are actually isomorphisms. So, $\phi'_{m-l}$ preserves the charts and acts trivially on them, and hence is an identity.
\end{proof}

\subsubsection{Restriction of a desingularization on an Elkik fiber}\label{restrsec}
The lemma implies that for large enough $n$ the subscheme $V^{(0)}\into X^{(0)}_{n^2}$ defines $X^{(1)}_{n(n-1)}$ up to a unique isomorphism, the subscheme $V^{(1)}\into X^{(1)}_{n(n-1)}$ defines $X^{(2)}_{n(n-2)}$ up to a unique isomorphism, etc. The sequence of morphisms $X^{(p)}_{n(n-p)}\to\dots\to X^{(0)}_{n^2}$ induced by the desingularization $f\:X^{(p)}\longto X^{(0)}$ and the centers $V^{(i)}\into X^{(i)}_{n(n-i)}$ for $0\le i\le p-1$ will be called the {\em $n$-th restriction of $f$} onto the Elkik fiber and will be denoted $f|_{X_{n^2}}$. The restriction allows to encode the desingularization in Elkik fibers in a way which is canonical up to the choice of $n$. The latter is not a real trouble, since any choice of large enough $n$ does the job equally well, and the subschemes $V^{(i)}\into X^{(i)}_{n(n-i)}$ and $V^{(i)}\into X^{(i)}_{N(N-i)}$ with $N\ge n$ are identified by the closed immersion $X^{(i)}_{n(n-i)}\into X^{(i)}_{N(N-i)}$. For this reason, we will not worry about $n$ in the sequel, and each time $n$ appears in the notation of a restriction we will assume that it is large enough.

\subsubsection{Ghost blow ups}
In order to formalize the above data, we define a {\em ghost blow up} as a morphism $f\:X'\to X$ and a closed subscheme $V\into X$ such that $f$ is an isomorphism over the complement of $V$. Naturally, $V$ is called the center of $f$ and the ghost blow up is {\em empty} if its center is. Obviously, $f|_{X_{n^2}}$ is a ghost blow up sequence which is also contracted, i.e. does not contain empty ghost blow ups.

\subsubsection{Compatibility with smooth $k$-morphisms}
Let $(X,\calI)$ and $(Y,\calI')$ be Elkik pairs, let $f\:X'\longto X$ and $g\:Y'\longto Y$ be some desingularizations, and consider the restrictions $\of=f|_{X_{n^2}}$ and $\og=g|_{Y_{n^2}}$ for large enough $n$ (depending on $X$ and $Y$). Given a flat morphism $h\:(Y_{n^2},\calI'_{n^2})\to(X_{n^2},\calI_{n^2})$ between the Elkik fibers we denote by $h^*(\of)$ the base change of $\of$ with respect to $h$, i.e. the tower $h^*(X^{(p)}_{n(n-p)})\to\dots\to Y^{(0)}_{n^2}$ with the subschemes $h^*(V^{(i)})$ where by definition $h^*(Z)=Z\times_{X_{n^2}}Y_{n^2}$ for an $X_{n^2}$-scheme $Z$. We say that $\of$ and $\og$ are {\em compatible with respect to $h$} if $\og$ is obtained from $h^*(\of)$ by eliminating empty ghost blow ups.

\begin{lem}\label{smoothlem}
Let $\calF$ be a desingularization algorithm for $k$-varieties which is compatible with smooth $k$-morphisms. Then the restriction of $\calF$ on Elkik fibers is compatible with smooth $k$-morphisms between the fibers in the following sense: let $(X,\calI)$ and $(Y,\calI')$ be Elkik pairs over $k$, and let $r$ be the conductor of $(X,\calI)$. If $n$ is large enough (depending on $X$ and $Y$) then for any smooth $k$-morphism $h\:(Y_{n^2+r},\calI'_{n^2+r})\to(X_{n^2+r},\calI_{n^2+r})$ the $n$-th restrictions $\calF(X)|_{X_{n^2}}$ and $\calF(Y)|_{Y_{n^2}}$ are compatible with $h|_{n^2}$.
\end{lem}
\begin{proof}
Corollary \ref{fibcor} tells that $h_{n^2}$ lifts to a smooth morphism of Elkik pairs $f\:(Z,\calI'')\to(X,\calI)$ and an \'etale morphism of Elkik pairs $g\:(Z,\calI'')\to(Y,\calI')$ such that $g^h$ is an isomorphism and $h_{n^2}=f_{n^2}\circ g^{-1}_{n^2}$. By our assumption, $\calF$ is compatible with $f$ and $g$, so $f^*(\calF(X))$ is a trivial extension of $\calF(Z)$ and similarly for $g$. Moreover, $g^*(\calF(Y))$ actually coincides with $\calF(Z)$ because $Y_\sing\subseteq Y_s$ and hence $Y_\sing$ is contained in the image of $Z$. In particular, $h_{n^2}^*(\calF(X)|_{n^2})$ is a trivial extension of $\calF(Z)|_{n^2}\toisom\calF(Y)|_{n^2}$.
\end{proof}

\subsubsection{Compatibility with regular morphisms}
Now, we are going to generalize Lemma \ref{smoothlem} to the case of regular morphisms $h\:{Y_{n^2}}\to X_{n^2}$. Note that even if $h$ is smooth and $X,Y$ are $k$-varieties, $h$ does not have to be a $k$-morphism. For this reason we do not have an analog of Corollary \ref{fibcor} anymore. Our strategy will be to reduce to $\bfQ$-varieties using approximation and to use that Lemma \ref{smoothlem} covers all regular morphisms between $\bfQ$-varieties.

\begin{prop}\label{Elkprop}
Let $\calF$ be a desingularization functor for varieties of characteristic zero that is compatible with all regular morphisms. Then the restriction of $\calF$ on Elkik fibers is compatible with regular morphisms between the fibers in the following sense: let $(X,\calI)$ and $(Y,\calI')$ be Elkik pairs over fields $k$ and $l$, respectively. Then there exists a number $r$ depending on $(X,\calI)$ such that for any large enough $n$ depending on $X$ and $Y$ and a regular morphism $h\:(Y_{n^2+r},\calI'_{n^2+r})\to(X_{n^2+r},\calI_{n^2+r})$, the $n$-th restrictions $\calF(X)|_{X_{n^2}}$ and $\calF(Y)|_{Y_{n^2}}$ are compatible with $h|_{n^2}$.
\end{prop}
\begin{proof}
By \cite[7.5]{bmt} $X$ is the filtered projective limit of a family $X_{\alp}$ of affine $\bfQ$-varieties with smooth transition morphisms, and then the projections $X\to X_{\alp}$ are regular. Moreover, taking only $\alp\ge\alp_0$ for a large enough $\alp_0$ we can easily achieve that $\calI$ is defined already on each $X_\alp$, and each pair $(X_{\alp},\calI_{\alp})$ is an Elkik fiber over $\bfQ$. Set $r=r(X_{\alp_0},\calI_{\alp_0})$; then the conductor of each pair $(X_{\alp},\calI_{\alp})$ does not exceed $r$ by Remark \ref{condrem}(ii). Define in a similar way a family $Y_{\alp'}$ with limit $Y$. Since $\calF$ is compatible with regular morphisms, $\calF(X)$ is induced from $\calF(X_\alp)$, and hence the restriction of $\calF$ to $X_{n^2}$ (resp. $Y_{n^2}$) is induced from its restriction to each $X_{\alp,n^2}$ (resp. $Y_{\alp',n^2}$). Fix any $\alp'$, then by \cite[$\rm IV_4$, 8.13.1]{ega} the composed morphism $h_{\alp'}\:X_{n^2+r}\to Y_{n^2+r}\to Y_{\alp',n^2+r}$ is induced from a morphism $h_{\alp,\alp'}\:X_{\alp,n^2+r}\to Y_{\alp',n^2+r}$ with large enough $\alp$. The latter morphism does not have to be smooth but its restriction onto a neighborhood $U$ of the image of $X_{n^2+r}$ is smooth by Lemma \ref{complem} because the composition of $h_{\alp,\alp'}$ with the regular morphism $X_{n^2+r}\to X_{\alp,n^2+r}$ is the regular morphism $h_{\alp'}$. Replacing $X_{\alp,n^2+r}$ with $U$ we can assume that $h_{\alp,\alp'}$ is a smooth morphism of $\bfQ$-varieties. Then $h_{\alp,\alp'}|_{n^2}$ is compatible with the restriction of $\calF$ by Lemma \ref{smoothlem}, and therefore $h|_{n^2}$ is compatible with the restriction as well.
\end{proof}

The following remark will not be used so we leave the proof to the interested reader.

\begin{rem}
One can also prove that if $(X,\calI)$ and $(X_\alp,\calI_\alp)$ are as in the proof of Proposition \ref{Elkprop} then $r(X,\calI)=\min_\alp r(X_\alp,\calI_\alp)$. It then follows that one can take $r=r(X,\calI)$ in the formulation of the proposition.
\end{rem}

\subsection{Functorial desingularization via algebraization}

\subsubsection{Proof of Theorem \ref{formdesth}}\label{formdesthsec}
Let $(\gtX,\gtI)$ be an affine principal rig-regular formal variety of characteristic zero. Choose a generator $\pi\in\gtI$, then a rig-smooth morphism $\gtX\to\gtS=\Spf(k[[\pi]])$ arises and therefore there exists an algebraization $(\gtX,\gtI)\toisom(\hatX,\hatcalI)$, where $(X,\calI)$ with $\calI=(\pi)$ is an Elkik pair over $k$. The completion of the desingularization $\calF_\Var(X)$ of $X$ is a desingularization $\gtf\:\gtX'\to\gtX$, and it follows from Proposition \ref{Elkprop} that $\gtf$ intrinsically depends on $(\gtX,\gtI)$. Indeed, if the latter is isomorphic to another completed Elkik pair $(\gtY,\gtJ)$ then the isomorphism induces isomorphism of all fibers and already the restrictions onto large enough fiber (depending on $\gtX$) are isomorphic by \ref{Elkprop}. Thus, we can denote $\gtf$ as $\hatcalF_\Var(\gtX)$. Furthermore, Proposition \ref{Elkprop} implies that the functor $\hatcalF_\Var$ (which is defined, so far, in the principal affine case) is compatible with any regular morphism $h\:(\gtY,\gtI')\to(\gtX,\gtI)$ because the fibers $h_n\:(\gtY_n,\calI'_n)\to(\gtX_n,\calI_n)$ are regular. It follows by the standard gluing argument from \S\S\ref{affsec}--\ref{invsec} that the definition of $\hatcalF_\Var$ extends to all rig-regular locally principal formal varieties of characteristic zero and their disjoint unions, and the obtained desingularization is compatible with all regular morphisms.

\subsubsection{Desingularization of schemes with small singular locus}\label{smallsec}
Consider a category $\gtC_\small$ as follows. Objects of $\gtC_\small$ are pairs $(X,Z)$, where $X$ is a generically reduced noetherian qe scheme of characteristic zero and $Z$ is a Cartier divisor in $X$ which contains $X_\sing$ and is a disjoint union of varieties. Morphisms in $\gtC_\small$ are morphisms $(X',Z')\to(X,Z)$ such that $X'\to X$ is regular and $Z'=X'\times_XZ$.

\begin{theor}\label{locth}
The functor $\calF_\Var$ extends to a desingularization functor $\calF_\gtC$ which assigns to a pair $(X,Z)$ a desingularization of $X$ in a way functorial with all morphisms from $\gtC_\small$. If $\calF_\Var$ is strong then $\calF_\gtC$ is strong.
\end{theor}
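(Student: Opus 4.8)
The plan is to define $\calF_\gtC(X,Z)$ by completing $X$ along $Z$, applying the formal functor $\hatcalF_\Var$ from Theorem \ref{formdesth}, and algebraizing the result via Lemma \ref{formseqlem}. First I would verify that the completion lands in $\hatVar_{p=0}$. Given $(X,Z)$ in $\gtC_\small$, put $\gtX=\hatX_Z$ and $\gtI=\calI_Z\calO_\gtX$. As $Z$ is a Cartier divisor, $\gtI$ is locally invertible, and it is open being the ideal of definition; since $Z$ is a disjoint union of varieties, the closed fibre of $\gtX$ is a variety, so $(\gtX,\gtI)$ is a locally principal formal variety. It is qe by Theorem \ref{gabth}, and by compatibility of singular loci with completion one has $|\gtX_\sing|=|X_\sing|\subseteq|Z|=|\gtX_s|$; since $\gtX_s$ is defined by a finitely generated open ideal, this inclusion forces $\gtX_\sing$ to be $\gtX_s$-supported, so $\gtX$ is rig-regular and $(\gtX,\gtI)\in\hatVar_{p=0}$.

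I would then set $\calF_\gtC(X,Z)$ to be the algebraization of $\hatcalF_\Var(\gtX)$. This formal blow up sequence is $\gtX_\sing$-supported, and since $X_\sing\subseteq Z$, Lemma \ref{formseqlem}(ii) yields a unique $Z$-supported desingularization $\calF_\gtC(X,Z):X_\bullet\longto X$ whose completion along $Z$ is $\hatcalF_\Var(\gtX)$. By the same Lemma it is a genuine desingularization of $X$, and it is strong exactly when $\hatcalF_\Var$ is, i.e. by Theorem \ref{formdesth} exactly when $\calF_\Var$ is.

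The main step is functoriality. Let $(X',Z')\to(X,Z)$ be a morphism of $\gtC_\small$, so $g:X'\to X$ is regular and $Z'\toisom X'\times_XZ$; then $\calI_{Z'}=\calI_Z\calO_{X'}$ and $X'_\sing=g^{-1}(X_\sing)\subseteq Z'$, so $(\gtX',\gtI')$ lies in $\hatVar_{p=0}$ as well, and by Corollary \ref{regcor} the completion $\gtg:\gtX'\to\gtX$ is a regular morphism of $\hatVar_{p=0}$. Functoriality of $\hatcalF_\Var$ gives that $\hatcalF_\Var(\gtX')$ is a trivial extension of $\hatcalF_\Var(\gtX)\times_\gtX\gtX'$, with equality when $\gtg$ is surjective. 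Since formal completion commutes with blow ups and with the flat base change along $g$ (see \S\ref{flatsec}), the sequence $\hatcalF_\Var(\gtX)\times_\gtX\gtX'$ is exactly the completion along $Z'$ of the $Z'$-supported sequence $\calF_\gtC(X,Z)\times_XX'$. A trivial extension completes to a trivial extension, so algebraizing by the uniqueness clause of Lemma \ref{formseqlem}(i) shows that $\calF_\gtC(X',Z')$ is a trivial extension of $\calF_\gtC(X,Z)\times_XX'$, coinciding with it when $g$ (hence $\gtg$) is surjective. This is precisely the compatibility required for $\calF_\gtC$ to be functorial on $\gtC_\small$.

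I expect the only genuine obstacle to be this descent of compatibility from the formal to the algebraic level. One must check that $\calF_\gtC(X,Z)\times_XX'$ is a legitimate $Z'$-supported blow up sequence — this is guaranteed by flatness of $g$ and \S\ref{flatsec} — and that completion carries it, together with the inserted empty blow ups, onto the formal base change, whereupon the uniqueness of algebraization does the rest. All the hard input (Elkik's theorems and the intrinsic-dependence results of \S\ref{intrsec}) has already been absorbed into Theorem \ref{formdesth}, so at this stage the argument is essentially bookkeeping; by Lemma \ref{afflem} one may, if preferred, first carry out both the construction and its functoriality on the affine subcategory and then glue, which also disposes of the disjoint-union bookkeeping.
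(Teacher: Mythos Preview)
Your proposal is correct and follows essentially the same approach as the paper: complete along $Z$, apply $\hatcalF_\Var$ from Theorem~\ref{formdesth}, algebraize via Lemma~\ref{formseqlem}, and deduce functoriality from Corollary~\ref{regcor} together with the functoriality of $\hatcalF_\Var$. Your write-up is in fact more explicit than the paper's---you spell out why $(\gtX,\gtI)\in\hatVar_{p=0}$ (the paper cites \cite[3.1.5(ii)]{temdes}) and you unpack the descent of compatibility through completion and algebraization more carefully than the paper does---but the architecture is identical.
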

\begin{proof}
To define $\calF_\gtC$ we note that $\gtX:=\hatX_Z$ is a rig-regular locally principal formal variety by \cite[3.1.5(ii)]{temdes}, hence it admits a desingularization $\gtf=\hatcalF_\Var(\gtX)$ by Theorem \ref{formdesth}. Since $\gtX$ is rig-regular, the centers of its desingularization are $\gtX_s$-supported. By Lemma \ref{formseqlem} $\gtf$ algebraizes to a desingularization $f\:X'\longto X$, and $f$ is strong if and only if $\gtf$ is strong. It remains to observe that given a morphism $f\:(X',Z')\to(X,Z)$ we obtain a morphism of their completions $\gtf\:\gtX'\to\gtX$ because $Z'=X'\times_XZ$. Moreover, if $f$ is regular then $\gtf$ is regular by Corollary \ref{regcor}. Since $\hatcalF_\Var$ is compatible with any regular morphism $\gtf$, we obtain that $\calF_\gtC$ is functorial with respect to all morphisms from $\gtC_\small$.
\end{proof}

\begin{rem}\label{locrem}
(i) Though it is reasonable to expect that $\calF_\gtC$ is functorial with respect to all regular morphisms $X'\to X$, we did not prove that. Moreover, it is even unclear if the desingularization of $(X,Z)$ is actually independent of the choice of $Z$ (in case, there are few possibilities).

(ii) Since it cannot cause any confusion, we will denote $\calF_\gtC$ as $\calF_\Var$ in the sequel.
\end{rem}

\section{Construction of $\calF$}\label{fsec}

\subsection{Induction on codimension}\label{indsec}

\subsubsection{Unresolved locus}
When working on (strong) desingularization of a scheme $X$ we will use the following terminology: given an $X_\sing$-supported blow up sequence $f\:X'\longto X$ by the {\em unresolved locus} of $f$ we mean the smallest closed set $f_\unr=T\subset X$ such that $f$ induces a (strong) desingularization on $X\setminus T$. Note that $T=f(X'_\sing)$ (resp. $T$ is the union of $f(X'_\sing)$ and the images of all singular loci of the centers of $f$).

\subsubsection{Desingularization up to codimension $d$}
We say that a blow up sequence $f\:X'\longto X$ is a {\em (strong) desingularization up to codimension $d$} if $f_\unr\subset X^{>d}$. This happens if and only if $f$ induces a (strong) desingularization on a neighborhood on $X^{\le d}$.

\begin{rem}\label{desrem}
(i) If $d=\dim(X)$ then desingularization is the same as desingularization up to codimension $d$. On the other side, if $X'\longto X$ is a desingularization up to codimension $d-1$ then $X'$ can have singularities in any codimension and their local structure can be even worse than that of the original $X$. However, the global structure of $X'_\sing$ is relatively simple because it is contractible to a finite set of closed points of $X$. In particular, $X'_\sing$ is a disjoint union of proper varieties, while $X_\sing$ can be any qe scheme of dimension smaller than $d$.

(ii) The desingularization of integral schemes in \cite{temdes} is constructed as a blow up sequence $X_n\longto X_0=X$, where each $X_d$ desingularizes $X$ up to codimension $d$ and the blow up $X_{d+1}\to X_d$ is $T$-supported for some closed $T\subset X$ disjoint from $X^{\le d}$. Thus, the desingularization is built successively by improving the situation over $X^1$, $X^2$, etc. To construct the blow up $X_{i+1}\to X_i$ we should care only for the preimages of finitely many "bad" points from $X^{i+1}$, and, similarly to (i), this reduces the problem to the case when $X_\sing$ is a variety. The latter case is reduced to desingularization of varieties by completing and algebraizing, similarly to our desingularization of $\gtC_\small$ in \S\ref{smallsec}.

(iii) We will adopt a similar strategy here with two modifications as follows: (a) we will also insert new $T$-supported blow ups in the middle of the sequence in order to correct the old centers over $X^{d+1}$ (in case of strong desingularization) and in order to make the preimage of each bad point to a Cartier divisor, and (b)
we will work with functors $\calF^{\le d}$ on $\QE_{p=0,\reg}$ rather than with desingularizations of single schemes. In a sense, we will construct $\calF$ by establishing an exhausting filtration $\calF^{\le d}$ by its blow up "subsequences" that desingularize each $X$ up to codimension $d$. This plan will be precisely formulated in \S\ref{brsec}.
\end{rem}

\subsubsection{Equicodimensional blow up sequences}
We say that a blow up sequence or a desingularization $f\:X_n\longto X_0$ of a locally noetherian scheme $X=X_0$ is {\em equicodimensional} if for each center $V_i$ of $f$ there exists a number $d$ such that $V_i$ is disjoint from the preimage of $X^{<d}$ and $V_i$ is $X^{\le d}$-admissible. Let $g_i\:X_i\to X$ denote the natural projection, then the above condition can be re-stated as follows: $g_i(V_i)$ is of pure codimension $d$ and for the discrete set $T_i=g_i(V_i)\cap X^d$ of its maximal points the set $g_i^{-1}(T_i)\cap V_i$ is schematically dense in $V_i$. In the above situation we say that $V_i$ is {\em of pure $X$-codimension $d$}.

\subsubsection{Filtration by codimension}
\begin{lem}\label{filtlem}
Let $f\:X_n\longto X_0=X$ be an equicodimensional blow up sequence and let $U$ be obtained from $X$ by removing the images of all centers of $f$ of $X$-codimension $\ge d$. Then there exists a unique blow up sequence $f^{\le d-1}\:X_m\longto X$ such that all centers of $f^{\le d-1}$ are of $X$-codimension strictly smaller than $d$ and
$f^{\le d-1}\times_XU$ is obtained from $f\times_XU$ by removing the empty blow ups.
\end{lem}
\begin{proof}
The lemma dictates what the restriction of $f^{\le d-1}$ over $U$ is. Since the centers of $f^{\le d-1}$ are of $X$-codimension strictly smaller than $d$, the whole $f^{\le d-1}$ must be the pushforward of $f^{\le d-1}|_U$ with respect to the open immersion $U\into X$ (i.e. we simply take the centers of $f^{\le d-1}$ to be the schematical closures of the centers of $f^{\le d-1}|_U$, as was observed in \S\ref{supsec}(a)).
\end{proof}

\subsubsection{The strategy of constructing $\calF$ and $\calF^{\le d}$}\label{brsec}
Let $\calF$ be an equicodimensional desingularization functor. Then we can define functors $\calF^{\le d}$ by setting $\calF^{\le d}(X)=(\calF(X))^{\le d}$. The procedure of removing centers of large $X$-codimension is compatible with regular morphisms, so $\calF^{\le d}$ is a functor in the same sense as $\calF$ is: if $f\:Y\to X$ is regular then $\calF^{\le d}(Y)$ is a trivial extension of $\calF^{\le d}(X)\times_XY$. Moreover, each $\calF^{\le d}$ is an up to
codimension $d$ desingularization functor because $\calF(X)|_U=\calF^{\le d}(X)|_U$ for a neighborhood $U$ of $X^{\le d}$. The sequence of functors $\calF^{\le d}$ is compatible in the sense that $(\calF^{\le d})^{\le e}=\calF^{\le e}$ for any pair $e\le d$.

\begin{lem}\label{filt2lem}
(i) Each equicodimensional desingularization functor $\calF$ defines a compatible sequence $\{\calF^{\le d}\}_{d\in\bfN}$ of equicodimensional desingularizations up to codimension $d$.

(ii) Conversely, assume that $\{\calF^{\le d}\}_{d\in\bfN}$ is a compatible sequence of equicodimensional desingularizations up to codimension $d$, and for any scheme $X$ from $\QE_{p=0,\reg}$ and numbers $e\ge d$ the centers of $\calF^{\le e}(X)$ of $X$-codimension $>d$ are $T_d$-supported, where $T_d=\calF^{\le d}_\sing(X)$ is the unresolved locus of $\calF^{\le d}(X)$. Then the sequence $\{\calF^{\le d}(X)\}_{d\in\bfN}$ stabilizes for large $d$'s, and hence the sequence $\{\calF^{\le d}\}_{d\in\bfN}$ gives rise to an equicodimensional desingularization functor $\calF$ on $\QE_{p=0,\reg}$.
\end{lem}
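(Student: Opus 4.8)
The plan is to handle the two directions separately; part (i) merely records the construction already sketched in \S\ref{brsec}. Given an equicodimensional desingularization functor $\calF$, I would set $\calF^{\le d}(X):=(\calF(X))^{\le d}$ using the filtration of Lemma \ref{filtlem} (applicable since $\calF(X)$ is equicodimensional). That $\calF^{\le d}$ is a functor is the observation of \S\ref{brsec} that discarding centers of large $X$-codimension commutes with regular base change, regularity being flatness with equidimensional fibers so that the $X$-codimension of centers is preserved. That $\calF^{\le d}(X)$ is an equicodimensional desingularization up to codimension $d$ follows from the identity $\calF(X)|_U=\calF^{\le d}(X)|_U$ on a neighborhood $U$ of $X^{\le d}$, where $\calF(X)$ already resolves $X$; and the compatibility $(\calF^{\le d})^{\le e}=\calF^{\le e}$ for $e\le d$ is the uniqueness clause of Lemma \ref{filtlem}, both sides arising from $\calF(X)$ by discarding the centers of $X$-codimension $>e$.

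The substance is in part (ii), and I would begin by showing that the unresolved loci are nested, $T_{d+1}\subset T_d$. By the compatibility $(\calF^{\le d+1})^{\le d}=\calF^{\le d}$, the sequence $\calF^{\le d+1}(X)$ is obtained from $\calF^{\le d}(X)$ by inserting precisely its centers of $X$-codimension $d+1$, and by hypothesis (with $e=d+1$) these centers are $T_d$-supported. Hence the two sequences agree up to empty blow ups over $U=X\setminus T_d$, where $\calF^{\le d}(X)$ already resolves $X$; so $T_{d+1}\cap U=\emptyset$, that is $T_{d+1}\subset T_d$.

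Next comes the stabilization, which is a short topological argument. Since $X$ is noetherian, $|X|$ satisfies the descending chain condition on closed subsets, so the chain $T_0\supset T_1\supset\cdots$ stabilizes at some closed set $T_{d_0}$. But each $\calF^{\le d}$ desingularizes up to codimension $d$, so $T_d\subset X^{>d}$ for every $d$; and $\bigcap_d X^{>d}=\emptyset$ because each $\calO_{X,x}$ is noetherian local, hence of finite Krull dimension, so every point has finite codimension. Therefore $T_{d_0}\subset\bigcap_{d\ge d_0}X^{>d}=\emptyset$. Consequently $\calF^{\le d}(X)$ is a genuine desingularization for $d\ge d_0$, and, applying the hypothesis once more with $T_{d_0}=\emptyset$, every center of $\calF^{\le e}(X)$ of $X$-codimension $>d_0$ is empty, so $\calF^{\le e}(X)=\calF^{\le d_0}(X)$ for all $e\ge d_0$. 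This is exactly the asserted stabilization, and I would set $\calF(X):=\calF^{\le d_0}(X)$.

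Finally, $\calF$ inherits equicodimensionality, regularity of centers in the strong case, and $X_\sing$-supportedness from its stabilized value, so together with $T_{d_0}=\emptyset$ this makes $\calF(X)$ a (strong) desingularization. For a regular morphism $f:X'\to X$ I would choose $d\ge\max(d_0(X),d_0(X'))$, so that $\calF(X)=\calF^{\le d}(X)$ and $\calF(X')=\calF^{\le d}(X')$ are compatible by functoriality of the single functor $\calF^{\le d}$. I expect the main obstacle to be the nesting $T_{d+1}\subset T_d$: it is the only step that genuinely uses the hypothesis on $T_d$-supported centers, and without it the unresolved loci could fail to decrease and the descending-chain argument would collapse; everything else is bookkeeping with the filtration of Lemma \ref{filtlem} and the finiteness of local codimensions.
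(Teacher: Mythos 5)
Your proof is correct and follows essentially the same route as the paper's: the nesting $T_{d+1}\subseteq T_d$ extracted from the $T_d$-supportedness hypothesis, stabilization of the chain of unresolved loci by the descending chain condition on closed subsets, and emptiness of the stable value because $T_d\subseteq X^{>d}$ while every point of a noetherian scheme has finite codimension. One cosmetic slip in part (i): regularity means flat with geometrically regular fibers, not ``equidimensional fibers,'' but the preservation of $X$-codimension of centers that you actually need holds for any flat base change, so nothing is affected.
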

\begin{proof}
The assertion of (i) was observed earlier. Assume given a sequence $\calF^{\le d}$ and a scheme $X$ as in (ii). The condition on the centers implies that $\calF^{\le d}(X)|_{X\setminus T_d}=\calF^{\le e}(X)|_{X\setminus T_d}$ and hence $T_e\subseteq T_d$. Thus, $T_1\supseteq T_2\supseteq T_3\dots$ and hence this sequence stabilizes by noetherian induction. However, $T_n$ is of codimension $n$, hence the only possibility for stabilization is that $T_n=\emptyset$ for large enough $n$, and then the sequence $\calF^{\le d}(X)$ stabilizes for $d\ge n$.
\end{proof}

We will construct successively a compatible family $\calF^{\le d}$ as in Lemma \ref{filt2lem}(ii). Each $\calF^{\le d+1}$ will be obtained by inserting few blow ups into $\calF^{\le d}$ in order to correct it over the codimension $d$ points of $\calF^{\le d}_\sing$. First we have to prove few easy claims about inserting blow ups into a blow up sequence, and this will be done in \S\ref{opersec}.

\subsection{Operations with blow up sequences}\label{opersec}

\subsubsection{Pushing forward with respect to closed immersions}\label{pushsec2}
\begin{lem}\label{pushlem2}
Let $X_0$ be a scheme with a closed subscheme $V_0=V'_0$ and let $g\:V_n\longto V_0$ be a blow up sequence of length $n$. Then there exists a unique blow up sequence $f\:X_n\longto X_0$ of length $n$ such that for each $1\le i<n$ the $i$-th strict transform $V'_i\into X_i$ of $V_0$ is $V_0$-isomorphic to $V_i$ and for each $0\le i<n$ the $i$-th center of $f$ is contained in $V'_i$ and is mapped isomorphically onto the $i$-th center of $g$ by the $V_0$-isomorphism $V'_i\toisom V_i$.
\end{lem}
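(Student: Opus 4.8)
The plan is to build $f$ by induction on $n$, producing its centers one at a time and transporting the centers of $g$ upward through the successive strict transforms of $V_0$. Write $W_i\into V_i$ for the centers of $g$ (so $V_{i+1}=\Bl_{W_i}(V_i)$ for $0\le i<n$) and set $V'_0=V_0$. The one input I need is the iterated strict-transform description of \S\ref{strictsec}: for a single blow up with center $W'_i$ contained in a closed subscheme $Z\into X_i$, the strict transform of $Z$ is $\Bl_{W'_i\times_{X_i}Z}(Z)$, and here $W'_i\times_{X_i}Z=W'_i$, since the scheme-theoretic intersection of a closed subscheme $W'_i$ with a larger closed subscheme $Z$ is $W'_i$ itself.

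For the construction I would proceed as follows. Suppose $X_0\to\cdots\to X_i$ has been produced together with a $V_0$-isomorphism of blow up sequences identifying the strict-transform tower $V'_i\longto V_0$ with the truncation $V_i\longto V_0$ of $g$; let $\phi_i:V'_i\toisom V_i$ be its top component (with $\phi_0=\id$). Then I pull the center $W_i\into V_i$ of $g$ back to $W'_i:=\phi_i^{-1}(W_i)\into V'_i\into X_i$, declare $W'_i$ to be the $i$-th center of $f$, and set $X_{i+1}=\Bl_{W'_i}(X_i)$. Because $W'_i\subseteq V'_i$, the recalled description gives $V'_{i+1}=\Bl_{W'_i}(V'_i)$, and $\phi_i$ carries $W'_i$ onto $W_i$, so by the universal property of blow ups it extends to a $V_0$-isomorphism of blow up sequences whose top component is $\phi_{i+1}:V'_{i+1}\toisom\Bl_{W_i}(V_i)=V_{i+1}$. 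After $n$ steps this yields a length-$n$ blow up sequence $f$ in which, by construction, the $i$-th center lies in $V'_i$ and is carried isomorphically onto the $i$-th center of $g$ by $\phi_i$, as required.

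For uniqueness I would use that an isomorphism of blow up sequences over $V_0$ is unique whenever it exists (as noted in the discussion of equality of blow up sequences). Thus, once $X_i$ and the previous centers $W'_0\.W'_{i-1}$ are fixed, the strict-transform tower $V'_i\longto V_0$ has centers $W'_j\times_{X_j}V'_j=W'_j$ matching, via the $\phi_j$, the centers $W_j$ of $g$; hence the identifying $V_0$-isomorphism, and in particular $\phi_i$, is forced. Now if $\tilf$ is another sequence satisfying the conclusion, I argue inductively that it coincides with $f$: assuming the two towers agree through $X_i$ (so their strict transforms and the isomorphisms $\phi_i$ agree), the defining condition requires the $i$-th center of $\tilf$ to lie in $V'_i$ and to map isomorphically onto $W_i$ under $\phi_i$, which forces it to be $\phi_i^{-1}(W_i)=W'_i$; hence $\tilX_{i+1}=X_{i+1}$ and the induction proceeds, giving $\tilf=f$ as blow up sequences.

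There is no serious obstacle here, as the construction is essentially forced at each stage; the only point that demands care is the bookkeeping with the identifications $\phi_i$. One must make sure that the center transported from $V_i$ to the strict transform $V'_i$ is genuinely canonical — which comes down to the uniqueness of isomorphisms of blow up sequences over $V_0$ — so that $W'_i$ is well defined in the construction and simultaneously forced in the uniqueness argument.
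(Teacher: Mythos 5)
Your proof is correct and follows essentially the same route as the paper's (much terser) argument: force the first center to coincide with that of $g$, use that the strict transform of $V_0$ is the blow up of $V_0$ along the same center and hence canonically $V_0$-isomorphic to $V_1$, and let that unique isomorphism dictate the next center, iterating. The extra care you take with the identifications $\phi_i$ and with the scheme-theoretic intersection $W'_i\times_{X_i}V'_i=W'_i$ is exactly the bookkeeping the paper leaves implicit.
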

\begin{proof}
The center of the first blow up of $f$ must coincide with that of $g$. Then the strict transform of $V_0$ is $V_0$-isomorphic to $V_1$ (and the isomorphism is unique because both are modifications of $V_0$). The isomorphism $V'_1\toisom V_1$ dictates the choice of the second center of $f$, etc.
\end{proof}

In the situation of the lemma, we will say that $f$ is the {\em pushforward} of $g$ with respect to the closed immersion $V_0\into X_0$.

\subsubsection{Extending}
\begin{defin}\label{extdef}
Assume that $f\:X_n\longto X_0$ is a blow up sequence of length $n$ and with centers $V_i$, $0\le m\le n$ is a number and $U\into X_0$ is an open subscheme such that $V_i$ is $U$-admissible for each $m<i<n$. Let, furthermore, $g\:X'_m\longto X_m$ be any $(X_0\setminus U)$-supported blow up sequence of length $n'$, then by an {\em extension} of $f$ with $g$ we mean a blow up sequence of the form $$f'\:X'_n\stackrel{f'_{n-1}\circ\dots\circ f'_m}\longto X'_m\stackrel{g}\longto X_m\stackrel{f_{m-1}\circ\dots\circ f_0} \longto X_0$$ of length $n+n'$ obtained from $f$ by inserting $g$ before $f_m$ and such that the following conditions are satisfied: (a) after the base change with respect to the open immersion $U\into X_0$, $f'$ becomes a trivial extension of $f$; (b) for each $i\ge m$ the center $V'_i$ of $f'_i\:X'_{i+1}\to X'_i$ is $U$-admissible. By {\em successive extending} of $f$ we mean applying the above extension operation few times.
\end{defin}

\begin{lem}\label{extlem}
Given $f\:X_n\longto X_0$ and $g\:X'_m\longto X_m$ as in definition \ref{extdef}, there exists a unique extension $f'$ of $f$ by $g$. The center $V'_m$ of $f'_m$ is naturally isomorphic to the strict transform of the center $V_m$ of $f_m$ under $g$, i.e. $V'_m=g^!(V_m)$.
\end{lem}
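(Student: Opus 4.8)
The plan is to determine $f'$ completely by its behaviour over $U$, so that uniqueness is essentially forced, then to realize the unique candidate by a pushforward construction (establishing existence), and finally to identify the first newly inserted center $V'_m$ with the strict transform $g^!(V_m)$. Throughout I write $U_i$ for the preimage of $U$ in $X_i$ (and in $X'_i$). The one structural fact I will lean on repeatedly is that, since $g$ is $(X_0\setminus U)$-supported, its restriction $g\times_{X_m}U_m$ is trivial by \S\ref{supsec}, so $X'_m\times_{X_0}U\toisom X_m\times_{X_0}U$.

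For uniqueness, I would argue as follows. Condition (a) of Definition \ref{extdef} forces $f'\times_{X_0}U$ to be a trivial extension of $f\times_{X_0}U$; since the $n'$ blow ups coming from $g$ are precisely the empty ones inserted over $U$, each center $V'_i$ with $i\ge m$ must restrict over $U$ to the corresponding center $V_i$ of $f$. Condition (b) then says each $V'_i$ is $U$-admissible, and by \S\ref{supsec} a $U$-admissible center is the pushforward (schematical closure) of its restriction to $U_i$. Hence every $V'_i$, and therefore $f'$ as a whole, is uniquely pinned down by the data of $f$, $g$ and $U$.

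For existence I would build $X'_{m+1}\.X'_n$ by induction on $i\ge m$, carrying along an isomorphism $X'_i\times_{X_0}U\toisom X_i\times_{X_0}U$, the base case $i=m$ being the isomorphism noted above. Given $X'_i$, define $V'_i\into X'_i$ to be the pushforward of $V_i\times_{X_0}U$ along the pro-open immersion $U_i\into X'_i$ via Lemma \ref{pushlem}, and put $X'_{i+1}=\Bl_{V'_i}(X'_i)$. By Lemma \ref{schemlem} the restriction of $V'_i$ over $U$ is again $V_i\times_{X_0}U$, and because blow ups commute with the open base change to $U$ (\S\ref{flatsec}) one gets $X'_{i+1}\times_{X_0}U\toisom X_{i+1}\times_{X_0}U$, so the induction continues. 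Each $V'_i$ is $U$-admissible by construction, giving (b); and over $U$ the sequence $f'$ is just $f$ with the $n'$ empty blow ups of $g$ inserted, giving (a). Thus $f'$ is an extension of $f$ by $g$, and by uniqueness it is \emph{the} one.

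It then remains to identify $V'_m$ with $g^!(V_m)$. Since $g$ is an isomorphism over $U_m$, the strict transform agrees with the total one there, so $g^!(V_m)\times_{X_0}U=V_m\times_{X_0}U=V'_m\times_{X_0}U$; both $g^!(V_m)$ and $V'_m$ are thus closed subschemes of $X'_m$ restricting to the same thing over $U$. By \S\ref{strictsec} the strict transform comes equipped with an induced $(X_0\setminus U)$-supported blow up sequence $g^!(V_m)\longto V_m$, trivial over $U_m$, and the conclusion follows once one knows that $g^!(V_m)$ is itself $U$-admissible, for then it is the pushforward of its $U$-restriction and the uniqueness clause of Lemma \ref{pushlem} forces $g^!(V_m)=V'_m$. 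I expect this last point to be the main obstacle: one must verify that passing to the strict transform along a sequence supported over $X_0\setminus U$ creates no associated points lying over $X_0\setminus U$, so that $U$-admissibility of the center $V_m$ is inherited by $g^!(V_m)$ — this is exactly what makes the intrinsically $U$-determined pushforward coincide with the a priori larger strict transform, and it is where the hypothesis that the relevant center is $U$-admissible is genuinely used (as simple examples with a center concentrated over $X_0\setminus U$ show that the identity can fail otherwise).
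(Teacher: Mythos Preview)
Your approach is essentially the same as the paper's: uniqueness is forced because conditions (a) and (b) pin each $V'_i$ down as the schematic closure of $V_i\times_{X_0}U$ in $X'_i$, existence is obtained by carrying out exactly that inductive pushforward construction, and the identification $V'_m=g^!(V_m)$ follows because both are closed subschemes of $X'_m$ with the same schematically dense open piece over $U$. The paper says all of this in a couple of sentences; your write-up is a more explicit unpacking of the same argument.

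The one place where you diverge from the paper is in treating the $U$-admissibility of $g^!(V_m)$ as a genuine obstacle. It is not: the very definition of the strict transform (as in \S\ref{strictsec} and \cite{Con1}) is that under a single blow up along $W$ the strict transform of $V$ is the schematic closure in the blow up of $V\setminus W$. Since the centers of $g$ lie over $X_0\setminus U$ and $V_m\cap U_m$ is schematically dense in $V_m$ by hypothesis, at each step $V_m\cap U_m$ is schematically dense in the complement of the center, and hence in its schematic closure. Iterating, $V_m\cap U_m$ is schematically dense in $g^!(V_m)$, i.e.\ $g^!(V_m)$ is $U$-admissible, and your uniqueness-of-pushforward argument finishes the identification. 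This is precisely what the paper means by ``we just use the definition of the strict transform and the fact that both $V'_m$ and $V_m$ have a common schematically dense open subscheme which is the preimage of $U$''; there is nothing further to verify.
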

\begin{proof}
The conditions (a) and (b) leave no choice in the definition of the centers $V'_i$ for $m\le i<n$: we must have that $V_i\times_XU\toisom V'_i\times_XU$ and $V'_i$ is the schematic closure of this scheme in $X'_i$. These rules dictate an inductive construction of the canonical sequence of blow ups $f'_{i+1}\:X'_{i+1}=\Bl_{V'_i}(X'_i)\to X'_i$ starting with $f'_m$. Then it is clear that the sequence satisfies all properties an extension should satisfy. Regarding the second claim, we just use the definition of the strict transform and the fact that both $V'_m$ and $V_m$ have a common schematically dense open subscheme which is the
preimage of $U$.
\end{proof}

\subsubsection{Merging}

\begin{lem}\label{merglem}
Assume that $X$ is a scheme with pairwise disjoint closed subschemes $T_1\. T_n$, $U_i=X\setminus(\coprod_{j\neq i}T_j)$, $U=\coprod_{i=1}^n U_i$, and $g\:U'\longto U$ is a $(\coprod_{i=1}^n T_i)$-supported blow up sequence. Then there exists a unique $(\coprod_{i=1}^n T_i)$-supported blow up sequence $f\:X'\longto X$ such that $f|_{U_i}=g|_{U_i}$ for each $1\le i\le n$.
\end{lem}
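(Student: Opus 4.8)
The plan is to realize $f$ by gluing the given sequences $g|_{U_i}$ along the open cover $\{U_i\}$ of $X$. First I would record the combinatorics of this cover. Since the $T_j$ are pairwise disjoint, every point of $X$ lies in at most one of them, and a point lying in $T_k$ (or in none of the $T_j$) lies in $U_k$ (resp. in every $U_i$); hence $\{U_i\}$ covers $X$. Moreover, for $i\neq j$ one computes
\[
U_i\cap U_j=X\setminus\Bigl(\bigsqcup_{k\neq i}T_k\cup\bigsqcup_{k\neq j}T_k\Bigr)=X\setminus\bigsqcup_kT_k=:W,
\]
and the same holds for every multiple intersection. Thus all overlaps of the cover equal the single open set $W$, which is contained in each $U_i$ and is disjoint from every $T_j$.

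The second step is to reduce the assertion to a cocycle condition over $W$ and to check it. A $(\sqcup_iT_i)$-supported sequence restricts to the trivial sequence over $W$, since $W$ meets no $T_j$; because $f|_{U_i}=g|_{U_i}$ and $W\subset U_i$, this forces each $g|_{U_i}$ to be trivial over $W$, i.e.\ $T_i$-supported (note $U_i\setminus W=T_i$ by the disjointness of the $T_j$). This is exactly the content of the hypothesis that $g$ is $(\sqcup_iT_i)$-supported, which I take as given. Granting it, all the restrictions $(g|_{U_i})|_W$ are trivial, hence mutually equal, so the sequences $g|_{U_i}$ agree on every pairwise overlap, and the triple-overlap condition is automatic because the triple overlaps again equal $W$.

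Finally I would carry out the gluing by induction on the length $N$ of $g$, in the manner of \S\ref{affsec} (compare \cite[3.37]{Kol} and the proof of Lemma \ref{afflem}). Over the $l$-th stage $X_l$, already covered by the $l$-th stages of the $g|_{U_i}$, the centers $V_{i,l}$ are closed subschemes of $X_l|_{U_i}$ that agree on the overlaps $W$ (being empty there), hence glue to a closed subscheme $V_l\into X_l$ whose image lies in $\bigsqcup_kT_k$. Setting $X_{l+1}=\Bl_{V_l}(X_l)$ and using compatibility of blow ups with the open base changes $U_i\into X$ (\S\ref{flatsec}), the restriction of this new blow up to $U_i$ is the corresponding stage of $g|_{U_i}$, so the induction proceeds and produces $f:X'\longto X$ with $f|_{U_i}=g|_{U_i}$ and centers supported on $\bigsqcup_kT_k$. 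Uniqueness is immediate: a blow up sequence is determined by its restrictions to an open cover, since at each stage the center is a closed subscheme determined by its restrictions to the $U_i$, so any two solutions coincide stage by stage.

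I expect the only real subtlety---and the step worth isolating---to be the observation that all overlaps of the cover coincide with $W=X\setminus\bigsqcup_kT_k$: this is what makes the gluing data trivial and the cocycle condition automatic, and it is precisely the supportedness of $g$ that guarantees the $g|_{U_i}$ carry no information over $W$, where a priori they are unrelated because the pieces $U_i$ are disjoint in $U=\sqcup_iU_i$.
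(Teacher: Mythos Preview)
Your proof is correct. The paper gives no proof at all, labeling the lemma ``obvious'' and moving on; your argument---observing that $\{U_i\}$ is an open cover of $X$ whose pairwise (and higher) overlaps all coincide with $W=X\setminus\bigsqcup_kT_k$, and then gluing the centers stage by stage---is precisely the natural argument implicit in that word.

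You also correctly isolate a point the statement leaves tacit: for the conclusion to hold one must assume that $g$ (equivalently each $g|_{U_i}$) is $(\sqcup_iT_i)$-supported, since otherwise no $(\sqcup_iT_i)$-supported $f$ can restrict to $g|_{U_i}$. The paper's sole application of the lemma (Extension $n{+}1$ in the proof of Theorem~\ref{desth}) explicitly verifies this support hypothesis before invoking the merge, confirming that it is indeed intended.
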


In the situation described in this obvious lemma we will say that $f$ is {\em merged} from the the blow up sequence $g$ or from its components $g\times_U U_i$.

\subsubsection{Compatibility with flat base changes}
Since blow up sequences are compatible with flat base changes in the sense of \S\ref{flatsec}, one can check straightforwardly that all constructions from \S\ref{opersec} are also compatible. So, we obtain the following lemma.

\begin{lem}\label{compatlem}
The operations of pushing forward, merging and extending blow up sequences are compatible with flat base changes.
\end{lem}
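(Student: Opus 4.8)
The plan is to reduce everything to two basic compatibilities with flat base change and then to observe that each of the three operations is assembled purely out of these pieces. Throughout I fix a flat morphism $h:Y_0\to X_0$ and base-change every scheme and center along it.

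First I would record the two basic facts. (a) Blow ups commute with flat base change: this is precisely the content of \S\ref{flatsec}, so for a center $V\into X_0$ one has a canonical isomorphism $\Bl_V(X_0)\times_{X_0}Y_0\toisom\Bl_{V\times_{X_0}Y_0}(Y_0)$. As an immediate consequence, strict transforms also commute with flat base change, since by \S\ref{strictsec} the strict transform $f^!(Z)$ is itself computed as the blow up of $Z$ along $V\times_X Z$, and both this center and the blow up base-change correctly. (b) Schematical closures commute with flat base change: if $U\into X_0$ is a (pro-)open immersion and $Z_U\into U$ a closed subscheme with schematical closure $Z\into X_0$, then $Z\times_{X_0}Y_0$ is the schematical closure in $Y_0$ of the base change of $Z_U$ inside the preimage of $U$. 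This is the only point needing a genuine argument: it follows from the fact that formation of the scheme-theoretic image of a quasi-compact quasi-separated morphism commutes with flat base change, applied to $i:Z_U\into X_0$, the relevant finiteness being guaranteed by local noetherianity.

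With (a) and (b) in hand, each operation is immediate. Pushing forward along a pro-open immersion (Lemma \ref{pushlem}) is defined by taking schematical closures of the centers, hence is compatible by (b); pushing forward along a closed immersion (Lemma \ref{pushlem2}) is defined via iterated strict transforms, hence is compatible by (a). Extending (Lemma \ref{extlem}) is built from the same two ingredients — each inserted center $V'_i$ is the schematical closure of the base change over the admissibility open $U$ of the corresponding $V_i$, while the intertwining blow ups are governed by (a) — so it is compatible as well. Finally, merging (Lemma \ref{merglem}) is characterized by the purely local conditions $f|_{U_i}=g|_{U_i}$; since flat base change commutes with restriction to the $U_i$ and the base-changed pieces glue by the same uniqueness, merging is compatible too.

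The only step demanding care is (b), and within it the pro-open case, since a pro-open immersion need not be quasi-compact and the composite $Z_U\into U\into X_0$ need not satisfy the hypotheses of the flat-base-change statement for scheme-theoretic images directly. There I would argue exactly as in Lemma \ref{schemlem}: first extend $Z_U$ to a closed subscheme over an honest open neighborhood $V\supset U$, where quasi-compactness holds and the statement applies, and then close up in $X_0$, checking at each stage that $h$ transports the construction. Everything else is a formal consequence of (a) and (b), in line with the remark preceding the statement that the verification is straightforward.
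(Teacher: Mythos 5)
Your argument is correct and follows the same route the paper intends: the paper offers no written proof beyond the citation of \S\ref{flatsec}, and your reduction to (a) blow ups and strict transforms commuting with flat base change and (b) schematical closures commuting with flat base change is exactly the ``straightforward check'' being alluded to. Your isolation of (b) --- flat base change of scheme-theoretic images, handled locally on a noetherian piece of $X_0$ where quasi-compactness of the pro-open immersion is automatic --- is the one point the paper leaves entirely implicit, and you treat it correctly.
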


\subsection{The main theorem}

Now we have all necessary tools to construct a strong desingularization functor $\calF$ on $\QE_{p=0,\reg}$ from the functor $\calF_\Var$ which was extended to a functor on $\gtC_\small$ by Theorem \ref{locth}. Note, however, that the functor $\calF$ will not coincide with $\calF_\Var$ even on varieties, since we must build the new desingularization functor from scratch for the sake of compatibility.

\begin{proof}[Proof of Theorem \ref{desth}]
By Lemma \ref{filt2lem} it suffices to build a compatible sequence of functors $\calF^{\le d}$ which provide an equicodimensional (resp. strong) desingularization up to codimension $d$ and such that the centers of $\calF^{\le d}$ of $X$-codimension $d$ sit over $T_{d-1}=\calF^{\le d-1}_\sing$. The construction will be done inductively, and we start with empty $\calF^{\le 0}$ since generically reduced schemes are regular in codimension $0$. Thus, we can assume that the sequence $\calF^{\le 0}\.\calF^{\le d-1}$ is already constructed, and our aim is to construct $\calF^{\le d}$. First we will construct $\calF^{\le d}(X)$ for a single scheme $X$, and then we will check that the construction is functorial. The required sequence will be obtained by extending the blow up sequence $f=\calF^{\le d-1}(X)\:X_m\longto X_0=X$ few times. To simplify notation, after each extension we will denote the obtained blow up sequence as $f\:X_m\longto X_0$, but this should not cause any confusion. By our assumption, $T_{d-1}$ is a closed subset of $X^{\ge d}$, hence it has finitely many points of codimension $d$, which are the generic points of the irreducible components of $T_{d-1}$ of codimension $d$. Let $T$ denote the set of these points.

{\it Extension 0.} We denote by $\oT$ the Zariski closure of $T$ with the reduced scheme structure and extend $f$ in the sense of Definition \ref{extdef} by inserting $g\:\Bl_\oT(X)\to X$ as the first blow up. As an output we obtain a blow up sequence $\calF^{\le d}_0(X)\:X'_m\longto X'_0\to X_0=X$ of length $m+1$ where the first center (the inserted one) is regular over a neighborhood of $T$. As agreed above, we set $f=\calF^{\le d}_0(X)$ and increase $m$ by one after this step. We claim that the scheme-theoretic preimage of $\oT$ in each $X_i$ with $i>0$ is a Cartier divisor $D_i$. Indeed, this is obvious for $D_1$ and for larger $i$'s we use the following lemma.

\begin{lem}
If $X'\to X$ is a blow up and $D\into X$ is a Cartier divisor then the locally principal closed subscheme $D'=D\times_XX'$ of $X'$ is a Cartier divisor.
\end{lem}
\begin{proof}
We can work locally on $X$, so let $X=\Spec(A)$, $D=(f)$, and $X'$ be the blow up along an ideal $I\subset A$. It is suffices to check the claim for a chart $\Spec(A[\frac{I}{a}])$ of the blow up, where $a\in I$. Since $f$ is a regular element of $A$, it is a regular element of $A_a$ and hence also a regular element of $A[\frac{I}{a}]\subseteq A_a$. Thus, $D'$ is a Cartier divisor in $X'$.
\end{proof}

{\it Extensions $1\. n$.} This step is needed only in the case of strong desingularization (otherwise one can run it as well, but this would only unnecessarily complicate the algorithm). The last $n$ centers of $\calF^{\le d}_0(X)$ are regular over $X^{\le d}\setminus\oT$ but do not have to be so over $T$. We remedy this problems by $n$ successive extensions. Let us describe the $i$-th one. It obtains as an input a blow up sequence $f=\calF^{\le d}_{i-1}(X)$ in which only the last $n-i$ centers can be non-regular over $T$ and outputs a blow up sequence $\calF^{\le d}_i(X)$ with only $n-i-1$ bad blow ups in the end. By our assumption, $X_{i+1}\to X_i$ is the first blow up of $f$ whose center $W$ can be non-regular over $T$. The intersection of $W_\sing$ with the preimage of $T$ can be non-empty, but it is definitely contained in $W\cap X_i^{\le d}\subseteq W^{\le d-1}$. Consider the blow up sequence $\calF^{\le d-1}(W)\:W'\longto W$, which exists by the induction assumption. Its centers $V_j$ can have singularities only over $W^{\ge d}$, in particular, the image of $(V_j)_\sing$ in $X$ is contained in $X^{>d}$. By Lemma \ref{pushlem2}, the pushforward $X'_i\longto X_i$ of $\calF^{\le d-1}(W)$ with respect to the closed immersion $W\into X_i$ is a blow up sequence with centers $V_j$. In particular, its centers are regular over $X^{\le d}$.

Let now $f'\:X'_m\longto X'_i\longto X_i\longto X_0$ be obtained from $f$ by inserting $X'_i\longto X_i$ instead of $X_{i+1}\to X_i$ as in Definition \ref{extdef}. By Lemma \ref{extlem} the center of $X'_{i+1}\to X'_i$ is the strict transform of $W$, hence it is $W'$. Since $W'$ is regular over $X^{\le d}$ by the construction, only the last $i-1$ blow ups of $f'$ can be non-regular over $T$ (the blow ups from the sequence $X'_m\longto X'_{i+1}$). So, we can set $\calF^{\le d}_i(X)=f'$.

\begin{rem}\label{redambrem}
The scheme $W$ does not have to be integral. So, we essentially exploited here that the resolution functor $\calF$ is defined for all reduced schemes.
\end{rem}

{\it Extension $n+1$.} At this stage we already have a blow up sequence $f=\calF^{\le d}_n(X)$ such that all its centers are regular over $X^{\le d}$. The only problem is that though the singular locus of $X_m$ is disjoint from the preimage of $X^{\le d}\setminus T$ it can intersect the preimage of $T$. For any $x\in T$ consider the pro-open subscheme $X_x:=X_m\times_X\Spec(\calO_{X,x})$ of $X_m$ and the scheme-theoretic preimage $D_x=(D_m)|_{X_x}$ of $x$. Clearly, $(X_x)_\sing\subseteq D_x$, and $D_x$ is a Cartier divisor by Extension 0. Thus, the pair $(X_x,D_x)$ is an object of $\gtC_\small$, and hence so is $(\coprod_{x\in T} X_x,\coprod_{x\in T} D_x)$. Applying $\calF_\Var$ to the latter pair we obtain a list of (resp. strong) desingularizations $f_x\:X'_x\longto X_x$ for all $x\in T$. (Actually, each $f_x$ is $\calF_\Var(X_x,D_x)$ saturated with few synchronizing empty blow ups.) Let $U\into X$ be an open neighborhood of $X^{\le d}$ such that the closures $\ox\in U$ of distinct points $x\in T$ are pairwise disjoint, and set $U_m=X_m\times_XU$. Define $g_x\:U_x\longto U_m$ as the pushforward of $f_x$ with respect to the pro-open immersion $X_x\into U_m$. Since each $g_x$ is $\ox$-supported, Lemma \ref{merglem} implies that we can merge all $g_x$'s into a single blow up sequence $g\:U'_m\longto U_m$. Finally, we define $f'\:X'_m\longto X_m$ to be the pushforward of $g$ with respect to the open immersion $U_m\into X_m$, and set $\calF^{\le d}(X)=f\circ f'$. Then $\calF^{\le d}(X)$ coincides with $\calF^{\le d-1}(X)$ over $X^{\le d}\setminus T$ and coincides with $f\circ f_x$ over each $\Spec(\calO_{X,x})$ for $x\in T$. By our construction the latter is a (resp. strong) desingularization of $\Spec(\calO_{X,x})$, hence the constructed $\calF^{\le d}(X)$ is a (resp. strong) desingularization of $X$ up to codimension $d$.

\begin{rem}
To illustrate a certain flexibility (and non-canonicity) of the method, note that we could use the blow up sequences $\calF_\Var(X_x,D_x)$ instead of $f_x$'s in the construction of $\calF^{\le d}(X)$ (in other words, we could omit all empty blow ups in $f_x$'s). Although such choice would work as well, our construction seems to be more natural.
\end{rem}

It remains to check that $\calF^{\le d}$ is functorial. By Lemma \ref{compatlem} the operations of pushing forward, extending and merging of blow ups are compatible with a regular (and even flat) base change $h\:\tilX\to X$, and we claim that it follows easily that all intermediate constructions in our proof are functorial. Indeed, the set of the maximal points of $h^{-1}(T)$ is exactly the set $\tilT$ of points of $\tilX^{\le d}$ over which $\calF^{\le d-1}(\tilX)$ is not a desingularization. Hence the Zariski closure of $\tilT$ coincides with $\oT\times_X\tilX$ in Extension 0, and therefore the blow up sequence $\calF^{\le d}_0$ is functorial. In the $i$-th Extension we have that $\tilW=W\times_X\tilX$ by inductive functoriality of $\calF^{\le d}_{i-1}$. Hence $\calF^{\le d-1}(\tilW)$ is a trivial extension of $\calF^{\le d-1}(W)\times_X\tilX$ by functoriality of $\calF^{\le d-1}$, and so $\calF_i^{\le d}$ is functorial. The last Extension is dealt with similarly.
\end{proof}

\begin{rem}
The same proof actually applies to a more general situation, which might include, for example, schemes of arbitrary characteristic. Let $\gtC$ be a subcategory of $\QE_\reg$ which is closed under blow ups and taking subschemes. Define a category $\gtC_\small$ as usually, i.e. the objects are pairs $(X,Z)$ with $X$ in $\gtC$ and $Z$ a Cartier divisor containing $X_\sing$ and isomorphic to a finite disjoint union of varieties, and the morphisms are the regular ones. Then the same argument as above shows that starting with a (strong) desingularization functor $\calF_\Var$ on $\gtC_\small$ one can construct a (strong) desingularization functor $\calF$ on $\gtC$. Obviously, in the case of non-strong desingularization one can skip extensions $1\. n$.
\end{rem}

\section{Desingularization in other categories}\label{catsec}

\subsection{Desingularization of stacks}

\subsubsection{Stacks}
Let $\gtX$ be an Artin stack, in particular, $\gtX$ admits a smooth covering $p\:U\to\gtX$ by a scheme. Given such a covering set $R=U\times_\gtX U$ with the projections $s$ and $t$ onto $U$, let $m$ denote the projection $p_{13}$ of $R\times_{s,U,t}R\toisom U\times_\gtX U\times_\gtX U$ onto $R$, and let $e\:U\to R$ denote the diagonal. Then $(U,R,s,t,m,e)$ is a scheme groupoid called a {\em smooth atlas} of $\gtX$. We will consider only smooth atlases and say that a stack is {\em qe} if it admits a qe smooth atlas, i.e. an atlas in which $U$ is qe (and so $R$ is also qe).

\subsubsection{Blow ups of stacks}
Any 1-morphism of stacks $\gtY\to\gtX$ lifts to a morphism of atlases if one chooses a fine enough atlas of $\gtY$. We say that a morphism $f\:\gtY\to\gtX$ is a {\em blow up} along a closed substack $\gtZ$ if it admits an atlas $(f_1,f_0)\:(Y_1\rra Y_0)\to(X_1\rra X_0)$ with $f_0$ the blow up along $Z_0=\gtZ\times_\gtX X_0$ and $f_1$ the blow up along $Z_1=\gtZ\times_\gtX X_1$. Note that in this case one can choose $X_1\rra X_0$ to be any atlas of $\gtX$ and then take $Y_i=\gtY\times_\gtX X_i$.

\subsubsection{Desingularization}
\begin{theor}\label{stackdes}
The blow up sequence functor $\calF$ extends uniquely to the $2$-category of generically reduced noetherian qe stacks over $\bfQ$.
\end{theor}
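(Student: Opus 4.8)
The plan is to \emph{descend} the scheme-level functor $\calF$ (constructed in Theorem \ref{desth}) along a smooth atlas. Fix a generically reduced noetherian qe stack $\gtX$ over $\bfQ$ and a qe smooth atlas $s,t\colon R\rra U$, so that $U$ and $R=U\times_\gtX U$ are generically reduced noetherian qe schemes over $\bfQ$ (generic reducedness of $U$, and then of $R$, follows from that of $\gtX$ because $U\to\gtX$ and $s\colon R\to U$ are smooth). Both projections $s,t\colon R\to U$ are smooth, hence regular, so they are morphisms in $\QE_{p=0,\reg}$ and Lemma \ref{sublem} yields the crucial equality $s^*\calF(U)=t^*\calF(U)$ of blow up sequences over $R$ (taking the synchronizing empty blow ups into account). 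This equality is exactly a descent datum on the blow up sequence $\calF(U)\colon U'\longto U$ relative to the cover $U\to\gtX$.

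First I would verify the cocycle condition on $R^{(2)}:=R\times_{s,U,t}R$. Writing $p_1,p_2$ for the two projections and $m$ for the groupoid multiplication, the groupoid axioms give $s\circ p_1=t\circ p_2$, $s\circ m=s\circ p_2$ and $t\circ m=t\circ p_1$, and each of these composites $R^{(2)}\to U$ is smooth. Applying Lemma \ref{sublem} to these morphisms identifies every pullback of $\calF(U)$ to $R^{(2)}$ canonically with $\calF(R^{(2)})$; under these identifications the descent isomorphism is the identity on all three factors, so the cocycle condition holds automatically. Since closed immersions satisfy effective fppf (a fortiori smooth) descent, the centers of $\calF(U)$ together with this datum descend to a chain of closed substacks, and—using that blow ups commute with the flat base changes $s,t$ (\S\ref{flatsec}) and that a blow up of a stack along a closed substack is defined via any atlas—I obtain an iterated blow up sequence of stacks $\calF(\gtX)\colon\gtX'\longto\gtX$ whose pullback to the atlas is $\calF(U)$. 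That $\calF(\gtX)$ is a (strong) desingularization, i.e. that $\gtX'$ is regular, the centers are regular and the sequence is $\gtX_\sing$-supported, can be checked on the atlas, where it holds for $\calF(U)$, using that regularity and singular loci are compatible with the smooth morphism $U\to\gtX$ (\S\ref{compsec}).

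Next I would establish independence of the atlas and, with it, uniqueness of the extension. Given two smooth qe atlases $U\to\gtX$ and $\tilU\to\gtX$, the fibre product $U\times_\gtX\tilU$ is a smooth cover of both, and the two projections are smooth surjective; functoriality of $\calF$ along them shows that the two descended sequences coincide. The same computation forces uniqueness: for any extension of $\calF$ to stacks, functoriality along the regular cover $U\to\gtX$ requires $\calF(U)$ to be the pullback of $\calF(\gtX)$, and smooth descent then determines $\calF(\gtX)$ completely, so the construction above is the only possibility. Since a scheme is its own atlas, this extension restricts to the original $\calF$ on schemes.

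Finally, functoriality with respect to a regular morphism $\gtY\to\gtX$ of stacks is obtained by choosing compatible atlases: starting from an atlas $U\to\gtX$, one takes an atlas $V$ of the stack $\gtY\times_\gtX U$, so that $V\to\gtY$ is a smooth atlas and the composite $V\to\gtY\times_\gtX U\to U$ is regular (as regular morphisms are stable under base change) and lifts $\gtY\to\gtX$; the compatibility of $\calF(V)$ with $\calF(U)$ on schemes then descends to the required compatibility of $\calF(\gtY)$ with $\calF(\gtX)$. The main obstacle is not any single step but the descent bookkeeping: one must check that the equalities furnished by Lemma \ref{sublem} assemble into a genuine descent datum on the \emph{entire} blow up sequence (including the inserted empty blow ups) and that effectiveness holds simultaneously at every stage. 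Once this is organized, independence of the atlas and functoriality are formal consequences of the functoriality of $\calF$ on schemes.
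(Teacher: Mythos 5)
Your proposal is correct and follows essentially the same route as the paper: both rest on the equality $s^*\calF(U)=t^*\calF(U)$ supplied by functoriality (Lemma \ref{sublem}), the compatibility with the groupoid multiplication on $R\times_{s,U,t}R$, atlas-independence by dominating two atlases with a common one, and functoriality via compatible atlases. The only cosmetic difference is that you phrase the construction as descent of the centers to closed substacks, whereas the paper directly assembles the resolved groupoid $(R_n\rra U_n)$ presenting $\gtX_n$; these are equivalent formulations of the same argument.
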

\begin{proof}
Given a generically reduced qe stack $\gtX$, find a qe atlas $s,t\:R\rra U$ of $\gtX$. Note that $U$ and $R$ are automatically generically reduced. By functoriality of $\calF$ the smooth morphisms $s$ and $t$ extend to smooth morphisms $t_n\. t_0=t$ and $s_n\. s_0=s$ between the whole blow up sequences $\calF(R)\:R_n\longto R$ and
$\calF(U)\:U_n\longto U$. Moreover, the groupoid composition map $m\:R\times_{s,U,t}R\to R$ is smooth, hence it extends to a tower of groupoid maps $m_i\:R_i\times_{s_i,U_i,t_i}R_i\to R_i$. Thus, we have actually constructed a tower of groupoid blow ups $(R_n\rra U_n)\longto(R\rra U)$, which gives rise to a blow up sequence $\calF(\gtX)\:\gtX_n\longto\gtX_0=\gtX$. Since $U_n$ is regular the stack $\gtX_n$ is regular.

Now, we have to check that the construction is independent of the chart. This reduces to applying the functoriality of $\calF$ few more times in the situation when two charts are dominated by a third one and comparing the corresponding blow up sequences. Finally, the compatibility of the construction with regular $1$-morphisms and $2$-isomorphisms between them is checked similarly, so we skip the details.
\end{proof}

\subsection{Desingularization of formal schemes and analytic spaces}

\subsubsection{Categories}
Let $\gtC'$ be any of the following categories: noetherian qe formal schemes over $\bfQ$, quasi-compact complex analytic spaces (maybe non-separated), quasi-compact $k$-analytic spaces of Berkovich or quasi-compact rigid $k$-analytic spaces for a complete non-Archimedean field $k$ of characteristic zero, which is non-trivially valued in the rigid case. We will be interested in the full subcategory $\gtC$ of $\gtC'$ whose objects have nowhere dense non-reduced locus.

\subsubsection{Regularity}
There is a natural notion of regular morphisms in all these categories: see \S\ref{formregsec} for the formal case, regularity is smoothness in the complex analytic and rigid analytic cases, and it is quasi-smoothness as defined by Ducros in \cite{duc2} in the case of Berkovich analytic spaces (for strictly analytic spaces this is equivalent to rig-smoothness, and in general this means that the morphism becomes rig-smooth after large enough ground field extension).

\subsubsection{Excellence}
Recall that Berkovich analytic spaces are excellent by \cite{duc}. As for complex analytic spaces $X$, let us say that a Stein compact $V\subset X$ is {\em excellent} if so is the ring $\calO_X(V)$. We claim that $X$ can be covered by excellent Stein compacts. Indeed, there exists an open covering $X=\cup_{i\in I}V_i$ such that each $V_i$ admits a closed immersion into an open polydisc $M_i$. Choose closed polydiscs $B_i\subset M_i$ such that $X_i=B_i\cap V_i$ cover $X$. The rings $\calO_{M_i}(B_i)$ are excellent by \cite[Th. 102]{Mat}, hence so are their quotients $\calO_X(X_i)$, i.e. $X_i$'s are excellent Stein compacts.

\begin{rem}
In general, $\calO_X(V)$ does not have to be noetherian, though it is always noetherian for a semi-algebraic $V$ (cf. \cite{Fri}). It seems plausible that if $\calO_X(V)$ is noetherian then it is qe, but this result seems to be missing in the literature.
\end{rem}

\subsubsection{Desingularization}
Let $\gtC_\reg$ be obtained from $\gtC$ by removing all non-regular morphisms.

\begin{theor}\label{formth}
Let $\gtC_\reg$ be as above. Then $\calF$ induces a strong desingularization on $\gtC_\reg$ by formal/analytic blow up sequence functor $\calF_\gtC$. For analytic and rigid analytic spaces, it is also compatible with ground field extensions.
\end{theor}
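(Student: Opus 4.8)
The plan is to treat the category of noetherian qe formal schemes over $\bfQ$ as the basic case and to deduce the three analytic cases from it by completing along the singular locus. For a qe formal scheme $\gtX$ over $\bfQ$ with nowhere dense non-reduced locus I would argue locally: on an affine piece $\gtX=\Spf(A)$ the ring $A$ is a generically reduced noetherian qe $\bfQ$-algebra, so $X=\Spec(A)$ lies in $\QE_{p=0,\reg}$ and Theorem \ref{desth} supplies a strong desingularization $\calF(X):Y\longto X$. I set $\calF_\gtC(\gtX)$ to be the formal completion of $\calF(X)$ along the ideal of definition of $A$. Since completion carries blow ups to formal blow ups, this is a formal blow up sequence; its source and centers are the completions of the regular scheme $Y$ and of the regular, $X_\sing$-supported centers of $\calF(X)$, so by compatibility of singular loci with completion (\cite[3.1.4]{temdes}) the source is regular, the centers are regular, and the sequence is $\gtX_\sing$-supported. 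Hence $\calF_\gtC(\gtX)$ is a strong desingularization of $\gtX$.

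Next I would establish functoriality and globalize. A regular morphism $\gtf:\gtY\to\gtX$ is, on affine pieces, induced by a regular homomorphism $A\to B$ (Lemma \ref{reglem}); then $\Spec(B)\to\Spec(A)$ is regular, so $\calF(\Spec B)$ and $\calF(\Spec A)$ are compatible by functoriality of $\calF$, and completing preserves this compatibility by Corollary \ref{regcor}. In particular formal localizations, being regular, show that $\widehat{\calF(\Spec A)}$ is independent of the affine chart, so the local pieces glue to a global $\calF_\gtC(\gtX)$ exactly as in \S\ref{affsec} and in the proof of Theorem \ref{stackdes}, using Lemma \ref{sublem}. This settles the formal case.

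For the analytic categories I would reduce to the formal case via the singular locus. Let $\gtX$ be an object of $\gtC_\reg$ with singular locus $\gtZ=\gtX_\sing$, which is a nowhere dense closed analytic subspace because $\gtX$ is generically reduced. The formal completion $\hatgtX_\gtZ$ is a noetherian formal scheme whose local rings are completions of the analytic local rings, which are excellent (by \cite{duc} in the Berkovich case); by Gabber's Theorem \ref{gabth} these completions are qe, so $\hatgtX_\gtZ$ is a qe formal scheme over $\bfQ$, and it is rig-regular since its singular locus is the completion of $\gtX_\sing$, hence supported on the closed fiber $\gtZ$. Applying the formal case just proved yields a strong desingularization $\gtf:\hatgtX'_\gtZ\longto\hatgtX_\gtZ$ whose centers are $\gtZ$-supported, i.e. are honest closed analytic subspaces of $\gtX$. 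By the analytic analogue of Lemma \ref{formseqlem}, $\gtf$ lifts uniquely to an analytic blow up sequence $\calF_\gtC(\gtX):\gtX'\longto\gtX$, which is a strong desingularization since regularity of the source and of the centers may be checked after completion. Functoriality with respect to regular morphisms follows from the formal case together with the analytic analogue of Corollary \ref{regcor}, and the globalization proceeds exactly as before.

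The main obstacle will be this last reduction. One must verify, in each of the three analytic settings, that completion along the singular locus really lands in the category of qe formal schemes over $\bfQ$ (excellence of the analytic local rings and its stability under completion), and, above all, the analytic analogue of Lemma \ref{formseqlem}: that a formal blow up sequence whose centers are supported on a closed analytic subspace is the completion of a unique analytic blow up sequence. This amounts to showing that blowing up an honest analytic subspace commutes with completion along $\gtZ$ and that the centers, being given by open ideals, descend to $\gtX$ — routine but category-dependent, and it is precisely where the specific geometry (formal models in the rigid and Berkovich cases, coherence in the complex case) must enter.
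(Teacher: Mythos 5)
Your treatment of the formal-scheme case is correct and is exactly the paper's argument: cover by $\Spf(A_i)$, apply $\calF$ to $\Spec(A_i)$, complete the centers, and glue using regularity of the restriction homomorphisms (Lemma \ref{reglem}, Corollary \ref{regcor}, Lemma \ref{sublem}). For the three analytic categories, however, your reduction has a genuine gap, and it occurs one step earlier than the place you flag. You assert that the completion $\hatgtX_\gtZ$ of an analytic space along its singular locus $\gtZ=\gtX_\sing$ is a noetherian qe \emph{formal scheme}, to which the formal case can be applied. It is not. Its underlying topological space is that of the analytic space $\gtZ$, which is in general positive-dimensional and carries the analytic (or rigid/Berkovich) topology, not the spectrum of any adic ring; the object is a ``formal analytic space,'' which does not lie in the category your formal case handles. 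Even working chart by chart does not repair this: for an affinoid chart $\Sp(\calA_i)$ one does get a genuine formal scheme $\Spf(\hatcalA_i)$ (the $I_i$-adic completion), but these do not glue to a formal scheme, because the chart overlaps are admissible opens (resp.\ Stein compacts), not Zariski opens, so they do not correspond to open formal subschemes of the $\Spf(\hatcalA_i)$. Thus ``applying the formal case just proved'' to $\hatgtX_\gtZ$ does not typecheck, and the subsequent appeal to an analytic analogue of Lemma \ref{formseqlem} never gets off the ground. (There is no way around this by shrinking $\gtZ$: the singular locus of, say, a Whitney umbrella is a whole curve.)

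The paper avoids this entirely by never completing along the singular locus in the analytic cases. It covers $X$ by finitely many charts $X_i$ that are affine formal schemes, affinoid spaces, or Stein compacts, uses that the rings of global sections $A_i=\calO_X(X_i)$ are excellent noetherian and generically reduced, applies $\calF$ directly to $\Spec(A_i)$, and transfers the resulting blow up sequence to $X_i$ by completing/analytifying the centers. The gluing is then done at the level of these analytic blow up sequences: each overlap $X_i\cap X_j$ is covered by further charts $X_{ijk}$ of the same kind (possibly infinitely many), the restriction homomorphisms $A_i\to A_{ijk}$ are regular, and functoriality of $\calF$ together with Lemma \ref{sublem} forces agreement. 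If you want to keep your two-stage structure, the correct fix is to replace ``complete $\gtX$ along $\gtX_\sing$'' with ``pass from the chart $X_i$ to the affine scheme $\Spec(A_i)$'' -- at which point the detour through formal schemes becomes superfluous and you recover the paper's proof.
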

\begin{proof}
Each compact space $X$ from $\gtC$ can be covered by finitely many charts $X_i$ which are affine formal schemes, affinoid spaces, or excellent Stein compacts. Similarly, we cover each intersection $X_i\cap X_j$ by spaces $X_{ijk}$ which are affine formal schemes, affinoid spaces, or excellent Stein compacts, though these coverings may be infinite (if $X$ is not quasi-separated). The rings of functions $A_i=\calO_X(X_i)$ and $A_{ijk}=\calO_X(X_{ijk})$ are excellent noetherian rings in each of these cases, and it is known that the homomorphisms $\phi_{ijk}\:A_i\to A_{ijk}$ are regular. Also, the non-reduced locus on $X_i$ is compatible with the non-reduced locus on $\calX_i:=\Spec(A_i)$ under the morphism $X_i\to\calX_i$ of locally ringed spaces, and so $\calX_i$ is generically reduced. The desingularization blow up sequence $\calF(\calX_i)$ induces a formal/analytic blow up sequence $\calF_\gtC(X_i)\:X'_i\longto X_i$ by completing/analytifying the centers. These sequences agree on the intersections because $\calF$ is compatible with the regular homomorphisms $\phi_{ijk}$. So, $\calF(X_i)$'s glue to a single blow up sequence $\calF_\gtC(X')\:X'\longto X$ which desingularizes $X$. Compatibility of $\calF_\gtC$ with regular morphisms follows from compatibility of the original $\calF$.
\end{proof}

\subsection{Desingularization in the non-compact setting}\label{noncomsec}

\subsubsection{Categories}
Let $\ogtC'$ be any of the following categories: locally noetherian qe schemes over $\bfQ$, locally noetherian qe stacks over $\bfQ$, locally noetherian qe formal schemes over $\bfQ$, complex analytic spaces, Berkovich $k$-analytic spaces or rigid spaces over $k$ for a complete non-Archimedean field $k$ of characteristic zero. As earlier, let $\ogtC$ be the full subcategory of $\ogtC'$ whose objects have nowhere dense non-reduced locus and let $\ogtC_\reg$ be obtained from $\ogtC$ by removing all non-regular morphisms.

\subsubsection{Blow up hypersequences}
By a {\em hypersequence} we mean a totally ordered set $I$ with an initial element $0$ and such that any element $i\in I$ possesses a successor which will be denoted $i+1$. By a {\em hypersequence $\{X_i\}_{i\in I}$ in $\ogtC$} we mean a set of objects of $\ogtC$ ordered by a hypersequence $I$ and provided with a transitive family of morphisms $f_{ji}\:X_j\to X_i$ for each $j\ge i$ (so $f_{ii}=\Id_{X_i}$). If $I$ is a hypersequence with a finite subsequence $i_0<i_1<\dots<i_n$, and $X_{i_n}\to X_{i_{n-1}}\to\dots\to X_{i_0}$ is a blow up sequence, then we can define its {\em trivial extension} $\{X_i\}_{i\in I}$ by taking $f_{kj}$ to be an empty blow up for any any pair $j,k\in I$ such that either $i_m<j\le k\le i_{m+1}$ for some $0\le m<n$, or $j\le k\le i_0$,  or $i_n\le j\le k$. By a {\em blow up hypersequence} we mean a hypersequence $\{X_i\}_{i\in I}$ such that each morphism $f_i:=f_{i+1,i}$ is a blow up, and each point $x\in X_0$ possesses a neighborhood $U$ for which the hypersequence $\{X_i\times_XU\}_{i\in I}$ is a trivial extension of its finite subsequence.

\begin{rem}
(i) Any blow up hypersequence converges to an object $X_\infty$ (if $I$ has a maximal element $i$ then $X_\infty=X_i$). So, if $I$ has no maximal element we can extend $\{X_i\}_{i\in I}$ to a blow up hypersequence $\{X_i\}_{i\in I\cup\{\infty\}}$.

(ii) Clearly, an infinite blow up sequence which stabilizes over compact subobjects of $X_0$ is a blow up hypersequence. Often one can perform few blow ups simultaneously thus "shrinking" the initial hypersequence to a hypersequence ordered by $\bfN\cup\infty$, i.e. to a usual sequence $\dots\to X_1\to X_0$ augmented by its limit. For example, this is obviously the case when $X_0$ is a disjoint union of compact components. However, we saw in Remark \ref{invrem} that functorial properties are destroyed by such operation, since one cannot ignore the order of the blow ups even when $X_0$ is a disjoint union of compact pieces. The existing desingularization algorithms can be extended to non-compact case functorially only when one allows blow up hypersequences because the resolving invariant takes values in complicated ordered sets rather than in $\bfN$ (see Remark \ref{invrem}(v)).
\end{rem}

\subsubsection{Desingularization}
\begin{theor}\label{hyperth}
Let $\ogtC_\reg$ be as above. Then the functor $\calF_\gtC$ from Theorem \ref{formth} induces a strong desingularization functor $\calF_\ogtC$ which assigns to objects of $\ogtC_\reg$ countable algebraic/formal/analytic blow up hypersequences with regular maximal element $X_\infty$. In particular, the morphism $X_\infty\to X$ is a functorial desingularization of $X$ by a single proper morphism.
\end{theor}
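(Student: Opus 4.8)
The plan is to construct hyper-$\calF_\gtC(X)$ for a non-quasi-compact object $X$ by covering $X$ by quasi-compact open subobjects $\{W_\alpha\}$, resolving each by the functor $\calF_\gtC$ of Theorem \ref{formth}, and gluing the resulting finite blow up sequences into a single hypersequence indexed by the invariant set. The gluing is governed by the countable totally ordered set $\inv(\calF_\gtC)$ of Remark \ref{invrem}(iv) (countable by Theorem \ref{vardesth} and the way $\calF_\gtC$ is built from $\calF_\Var$). First I would record that, since open immersions are regular, functoriality of $\calF_\gtC$ makes $\calF_\gtC(W_\beta)$ a trivial extension of $\calF_\gtC(W_\alpha)\times_{W_\alpha}W_\beta$ whenever $W_\beta\subseteq W_\alpha$, and more generally, by Lemma \ref{sublem}, the restrictions of $\calF_\gtC(W_\alpha)$ and $\calF_\gtC(W_\gamma)$ to $W_\alpha\cap W_\gamma$ agree as blow up sequences once the synchronizing empty blow ups are accounted for. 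Thus each blow up occurring in any local sequence carries a well-defined invariant in $\inv(\calF_\gtC)$, and blow ups of equal invariant on overlapping charts are identified.

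Let $I\subseteq\inv(\calF_\gtC)$ be the set of invariants that actually occur on $X$; it is countable and inherits from $\inv(\calF_\gtC)$ the structure of a hypersequence, namely an initial element and successors. For each $i\in I$ I would define $f_i\colon X_{i+1}\to X_i$ by merging, over the cover, the stage-$i$ blow ups of the local sequences $\calF_\gtC(W_\alpha)$; this merging is legitimate because the centers agree on overlaps, by Lemma \ref{merglem} together with the gluing argument of Lemma \ref{afflem}. Since every point $x\in X$ lies in some quasi-compact $W_\alpha$ over which only finitely many stages are non-empty, the restricted family $\{X_i\times_X W_\alpha\}_{i\in I}$ is a trivial extension of a finite subsequence, which is exactly the local-finiteness condition in the definition of a blow up hypersequence. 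Countability of $I$ is automatic from that of $\inv(\calF_\gtC)$, independently of the cardinality of the cover.

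By the Remark following the definition of blow up hypersequence, the sequence converges to a maximal element $X_\infty$; locally $X_\infty\times_X W_\alpha$ is the regular source of the finite resolution $\calF_\gtC(W_\alpha)$, so $X_\infty$ is regular, with regular centers since $\calF_\Var$, and hence $\calF_\gtC$, is strong. The morphism $X_\infty\to X$ is proper because properness is local on the target and over each $W_\alpha$ it is a finite composition of blow ups. Functoriality of hyper-$\calF_\gtC$ with respect to a regular morphism $f\colon X'\to X$ then follows by pulling back the cover and invoking functoriality of $\calF_\gtC$ on each quasi-compact chart: hyper-$\calF_\gtC(X')$ is obtained from the base change of hyper-$\calF_\gtC(X)$ by deleting empty blow ups, exactly as in the quasi-compact case.

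The main obstacle is the coherent global ordering. Merging the local finite sequences naively into a sequence indexed by $\bfN\cup\{\infty\}$ (performing one unrelated batch of blow ups at each integer step) does yield a blow up hypersequence, but it destroys functoriality, as explained in Remark \ref{invrem}: blow ups on different charts that happen to fall at the same integer step need not share an invariant, so the batching is not preserved under regular base change. The substance of the argument is therefore to index \emph{strictly} by $\inv(\calF_\gtC)$ and to verify that the occurring invariants form a genuine hypersequence whose stage-wise blow ups glue, i.e.\ that the synchronization already encoded in the empty blow ups of the quasi-compact functor $\calF_\gtC$ propagates to the non-compact setting. Once this bookkeeping is in place, every remaining step is a formal consequence of the quasi-compact theory.
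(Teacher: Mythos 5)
Your proposal is correct and follows essentially the same route as the paper: cover $X$ by quasi-compact charts, resolve each by $\calF_\gtC$, saturate the resulting finite sequences with trivial blow ups so they become hypersequences indexed by the countable invariant set $\inv(\calF_\gtC)$, and glue using agreement on overlaps guaranteed by functoriality. Your write-up is more explicit than the paper's (which compresses all of this into a few lines), but no new idea or different decomposition is involved.
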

\begin{proof}
We act as in the proof of Theorem \ref{formth}, though this time a chart $\{X_i\}$ can be infinite. We have defined resolutions $\calF_\ogtC(X_i)=\calF_\gtC(X_i)$ in the proof of Theorem \ref{formth}. Obviously, we can saturate each blow up sequence $\calF_\ogtC(X_i)$ with trivial blow ups so that one obtains a blow up hypersequence $\calF_\ogtC(X_i)$ whose objects are parameterized by the invariants of $\calF_\ogtC$ (and so there are countably many of them). Then the hypersequences $\calF_\ogtC(X_i)$ agree on the intersections because they agree over each compact subspace in the intersections, and hence glue to a single countable hypersequence $\calF_\ogtC(X)$.
\end{proof}


\begin{thebibliography}{EGA I}
\bibitem[And]{And}
Andr\'e, M.: {\it Localisation de la lissit\'e formelle}, Manuscripta Math. {\bf 13} (1974), 297--307.

\bibitem[BM1]{BM}
Bierstone, E.; Milman, P.: {\it Canonical desingularization in characteristic zero by blowing up the maximum
strata of a local invariant}, Invent. Math. {\bf 128} (1997), no. 2, 207--302.

\bibitem[BM2]{bmfun}
Bierstone, E.; Milman, P.: {\it Functoriality in resolution of singularities}, Publ. Res. Inst. Math. Sci. {\bf
44} (2008), 609--639.

\bibitem[BMT]{bmt}
Bierstone, E.; Milman, P.; Temkin M.: {\it $\bfQ$-universal desingularization}, Asian J. of Math. {\bf 15} (2011), 229--250.

\bibitem[Con]{Con}
Conrad, B.: {\it Deligne's notes on Nagata compactifications}, J. Ramanujan Math. Soc. {\bf 22} (2007), no. 3,
205--257.

\bibitem[Duc1]{duc}
Ducros, A.: {\it Les espaces de Berkovich sont excellents}, Ann. Inst. Fourier {\bf 59} (2009), 1407--1516.

\bibitem[Duc2]{duc2}
Ducros, A.: {\it Flatness in non-Archimedean analytic geometry}, preprint,  arXiv:[1107.4259].

\bibitem[EGA]{ega}
Dieudonn\'e, J.; Grothendieck, A.: {\it \'El\'ements de g\'eom\'etrie alg\'ebrique}, Publ. Math. IHES, {\bf 4,
8, 11, 17, 20, 24, 28, 32}, (1960-7).

\bibitem[EGA I]{egaI}
Dieudonn\'e, J.; Grothendieck, A.: {\it \'El\'ements de g\'eom\'etrie alg\'ebrique, I: Le langage des schemas},
second edition, Springer, Berlin, 1971.

\bibitem[Elk]{Elk}
Elkik, R.: {\it Solution d'\'equations \`a coefficients dans un anneau hens\'elien}, Ann. Sci. Ecole Norm. Sup.
(4) {\bf 6} (1973), 553--603.

\bibitem[FEM]{FEM}
de Fernex, T.; Ein, L.; Mustata, M.: {\it Shokurov's ACC Conjecture for log canonical thresholds on smooth
varieties}, Duke Math. J. {\bf 152} (2010), 93-–114.

\bibitem[Fri]{Fri}
Frisch, J.: {\it Points  de  platitude  d'un  morphisme  d'espaces  analytiques  complexes}, Invent. Math. {\bf 4} (1967), 118--138.

\bibitem[GR]{GR}
Gabber, O.; Ramero, L.: {\it Almost ring theory}, Lecture Notes in Mathematics, 1800. Springer-Verlag, Berlin, 2003, vi+307 pp.

\bibitem[Hir]{Hir}
Hironaka, H.: {\it Resolution of singularities of an algebraic variety over a field of characteristic zero. I,
II}, Ann. of Math. {\bf 79} (1964), 109--203 and 205–-326.

\bibitem[Ill]{Ill}
Illusie, L.: {\it Complexe cotangent et d\'eformations. I}, Lecture Notes in Mathematics, Vol. 239. Springer-Verlag, Berlin-New York, 1971. xv+355 pp.

\bibitem[Ked]{Ked}
Kedlaya, K.: {\it Good formal structures for flat meromorphic connections, II: Excellent schemes},
Journal of the American Mathematical Society {\bf 24} (2011), 183--229.

\bibitem[Kol]{Kol}
Koll\'ar, J.: {\it Lectures on resolution of singularities}, Annals of Mathematics Studies, 166. Princeton
University Press, Princeton, NJ, 2007. vi+208 pp.

\bibitem[Mat]{Mat}
Matsumura, H.: {\it Commutative algebra}, Mathematics Lecture Note Series, vol. 56, The Benjamin
Cummings Publishing Company, Reading, Massachusetts, 1980, seconde edition.

\bibitem[Nic]{Nic}
Nicaise, J.: {\it A trace formula for rigid varieties, and motivic Weil generating series for formal schemes},
Math. Ann. {\bf 343} (2009), 285--349.

\bibitem[NN]{NN}
Nishimura, J.; Nishimura, T. {\it Ideal-adic completion of Noetherian rings. II},  Algebraic geometry and
commutative algebra, Vol. II,  453--467, Kinokuniya, Tokyo, 1988.

\bibitem[Po]{Po}
Popescu, D.: {\it General N\'eron desingularization}, Nagoya Math. J. {\bf 100} (1985), 97–-126.

\bibitem[RG]{RG}
Raynaud, M.; Gruson, L.: {\it Crit\`eres de platitude et de projectivit\'e}, Inv. Math. {\bf 13} (1971), 1--89.

\bibitem[Tem1]{temdes}
Temkin, M.: {\it Desingularization of quasi-excellent schemes in characteristic zero}, Adv. Math., {\bf 219}
(2008), 488-522.

\bibitem[Tem2]{survey}
Temkin, M.: {\it Absolute desingularization in characteristic zero}, in Motivic Integration and its Interactions with Model Theory and Non-Archimedean Geometry, volume 2, Edited by R. Cluckers, J. Nicaise, and J. Sebag, London Mathematical Society Lecture Note Series {\bf 384}.

\bibitem[Tem3]{emb}
Temkin, M.: {\it Functorial desingularization over $\bfQ$: boundaries and the embedded case}, preprint,
arXiv:[0912.2570].

\bibitem[Vil]{Vil}
Villamayor, O.: {\it Constructiveness of Hironaka's resolution}, Ann. Sci. Ecole Norm. Sup. (4) {\bf 22} (1989),
no. 1, 1--32.

\bibitem[W\l]{Wl}
W\l odarczyk, J.: {\it Simple Hironaka resolution in characteristic zero}, J. Amer. Math. Soc. {\bf 18} (2005),
no. 4, 779--822 (electronic).

\end{thebibliography}
\end{document}